\newtheorem{thm}{Theorem}[section]
\newtheorem{conj}[thm]{Conjecture}
\newtheorem{prop}[thm]{Proposition}
\newtheorem{lem}[thm]{Lemma}
\newtheorem{defn}[thm]{Definition}
\newtheorem{cor}[thm]{Corollary}
\theoremstyle{remark}
\newtheorem{rem}[thm]{Remark}
\newtheoremstyle{named}{}{}{\itshape}{}{\bfseries}{.}{.5em}{#1 \thmnote{#3}}
\theoremstyle{named}
\newtheorem{namedtheorem}{GHN}
\newcommand{\GHN}[2]{%
  \begin{namedtheorem}[#1]\label{#2}
}
\crefname{lem}{lemma}{lemmas}
\Crefname{lem}{Lemma}{Lemmas}
\crefname{thm}{theorem}{theorems}
\Crefname{thm}{Theorem}{Theorems}
\crefname{defn}{definition}{definitions}
\Crefname{defn}{Definition}{Definitions}
\crefname{prop}{proposition}{propositions}
\Crefname{prop}{Proposition}{Propositions}
\crefname{cor}{corollary}{corollaries}
\Crefname{cor}{Corollary}{Corollaries}
\crefname{conj}{conjecture}{conjectures}
\Crefname{conj}{Conjecture}{Conjectures}
\crefname{namedtheorem}{ghn}{ghns}
\Crefname{namedtheorem}{GHN}{GHNs}
\newcommand{\extw}{\widetilde{W}}
\newcommand{\brF}{\breve{F}}
\newcommand{\brG}{\breve{G}}
\newcommand{\bbQ}{\mathbb{Q}}
\newcommand{\bbS}{\mathbb{S}}
\newcommand{\calO}{{\mathcal{O}}}
\newcommand{\etasigma}{\eta_\sigma}
\DeclareMathOperator{\defe}{def}
\DeclareMathOperator{\nr}{nr}
\DeclareMathOperator{\supp}{supp}
\DeclareMathOperator{\id}{id}
\DeclareMathOperator{\Adm}{Adm}
\DeclareMathOperator{\rk}{rk}
\DeclareMathOperator{\GL}{GL}
\DeclareMathOperator{\PGL}{PGL}
\newcommand{\sigmasupp}{\supp_\sigma}
    \title{Nonemptiness of single affine Deligne-Lusztig varieties}
\begin{document}

	\author[D.G. Lim]{Dong Gyu Lim}
	
	\address{Evans Hall, University of California at Berkeley, CA, USA}
	\email{limath@math.berkeley.edu}
	
	\begin{abstract}
		Affine Deligne-Lusztig varieties with various level structures show up in the study of Shimura varieties and moduli spaces of shtukas. Among is the Iwahori level structure which is the most refined one. We study the nonemptiness problem of single affine Deligne-Lusztig varieties at Iwahori level in the basic case. Under a genericity condition (the ``shrunken Weyl chambers'' condition), an explicit criterion is known. However, no explicit criterion has been available without the condition even conjecturally. We conjecture a new criterion in full generality, and prove it except for finitely many cases. As an application, the nonemptiness problem for special cases and a new conjectural dimension formula are discussed.
	\end{abstract}
	\date{\today}
	
	\maketitle
	\tableofcontents

\section{Introduction}

\subsection{Background}

In the study of the special fibers of Shimura varieties, affine Deligne-Lusztig varieties show up naturally. In his seminal expository article \cite{Ra05}, Rapoport introduced \textit{affine Deligne-Lusztig variety} over a mixed characteristic local field as an important piece in the description of $\overline{\mathbb{F}}_p$-points of the special fiber of a certain Shimura variety with hyperspecial level structure or Iwahori level structure. This was motivated by the Langlands-Rapoport conjecture in that the $p$-part of the conjecture is the affine Deligne-Lusztig variety. Since then, affine Deligne-Lusztig varieties have been exploited in the study of Shimura varieties, Rapoport-Zink spaces, local Shimura varieties, and moduli spaces of local shtukas.

Many questions arise naturally including the geometric (or scheme) structure, the nonemptiness, the dimension formula, and the set of connected components, etc. These basic questions are not only interesting on their own but useful in the study of the aforementioned objects. For example, the set of connected components computed in \cite{CKV} is used to prove the Langlands-Rapoport conjecture in \cite{Kis17}. It (resp. the dimension formula) can also be used to describe the set of connected components (resp. the dimension) of Rapoport-Zink spaces (\cite[3]{Shen}, \cite[3.2]{Zhu}). In this paper, we focus on the nonemptiness question and the dimension formula.
\subsubsection{Mazur's Inequality}\label{mazurineq}
Let $G$ be a connected reductive group over $\mathbb{Z}_p$ with a maximal torus $T$ and let $\breve{\mathbb{Q}}_p$ be the fraction field of the ring of Witt vectors $W(\overline{\mathbb{F}}_p)\equalscolon\breve{\mathbb{Z}}_p$. The Frobenius morphism on $\overline{\mathbb{F}}_p$ lifts uniquely to $\breve{\mathbb{Z}}_p$ by the universal property of the ring of Witt vectors and then extend to a bijective map (denoted by $\sigma$) on $\breve{\bbQ}_p$. Now, fix $b\in G(\breve{\bbQ}_p)$ and a dominant cocharacter $\mu\in X_*(T)^+$. The affine Deligne-Lusztig variety is defined as \[X_\mu(b)\colonequals\{gG(\breve{\mathbb{Z}}_p)\in G(\breve{\bbQ}_p)/G(\breve{\mathbb{Z}}_p):g^{-1}b\sigma(g)\in G(\breve{\mathbb{Z}}_p)p^\mu G(\breve{\mathbb{Z}}_p)\},\]where $p^\mu$ is the image of $p$ under the cocharacter $\mu$.

The very first result on the nonemptiness is due to Rapoport-Richartz (\cite[Theorem 4.2]{RR96}). They showed that, when $G$ is unramified, if $X_\mu(b)$ is nonempty then \textit{Mazur's inequality}, that is, $[b]\in B(G,\mu)$ (see \Cref{bgmu}) holds. Thanks to \cite{Kottwitz03}, \cite{Gashi}, and \cite{He14}, it is now a theorem that $X_\mu(b)\neq\emptyset$ if and only if $[b]\in B(G,\mu)$ for a general reductive group $G$.

Similarly, for $G$ a connected reductive group over $\mathbb{Q}_p$, one can consider affine Deligne-Lusztig varieties with an arbitrary parahoric level structure $K$ as follows using Bruhat-Tits theory.\begin{align*}X(\mu,b)_{K}\colonequals\{gK\in G(\breve{\bbQ}_p)/K:g^{-1}b&\sigma(g)\in K \dot{x} K\\&\text{ for some }x\in W_{K}\backslash\Adm(\mu)/W_{K}\},\end{align*}where $W_K$ is the group generated by the simple reflections of $K$ and $\Adm(\mu)$ is the $\mu$-admissible set (\cite{KR00}). Still, Mazur's inequality is the necessary and sufficient condition for $X(\mu,b)_K\neq\emptyset$ (see \cite{KoRa03}, \cite{Win05}, and \cite{He16}).

Meanwhile, $X(\mu,b)_K$ is, from the definition, a disjoint union of several pieces (where $x$ varies over $W_K\backslash\!\Adm(\mu)/W_K$). These pieces, therefore, can be thought of as more refined objects or building blocks, which we call as \textit{single}\footnote{This is the terminology used in the literature occasionally. See \cite{He14} for example.} affine Deligne-Lusztig varieties. One can study their nonemptiness problem and, to get to the point first, it has a considerably different flavor.

We remark that, among parahoric level structures, the Iwahori level structure contains the finest information via the natural projection map from the affine flag variety (Iwahori level) to the affine partial flag variety (parahoric level) or the affine Grassmannian (hyperspecial level). From now on, we restrict ourselves to the Iwahori level.

\subsection{Single affine Deligne-Lusztig variety at Iwahori level}\label{reumanconj}

Along with the notations from \Cref{mazurineq}, let $I$ be a $\sigma$-stable Iwahori subgroup of $G(\breve{\bbQ}_p)$ stabilizing a base alcove, $\extw$ be the Iwahori-Weyl group, and $W_0$ be the relative Weyl group (see \Cref{IwahoriWeyl}). For $x\in \extw$ and $b\in G(\breve{\bbQ}_p)$, the single affine Deligne-Lusztig variety (at Iwahori level) is defined by\[X_x(b)\colonequals\{gI\in G(\breve{\bbQ}_p)/I:g^{-1}b\sigma(g)\in I\dot{x}I\}\]where $\dot{x}$ is an element in a subgroup of $G(\breve{\bbQ}_p)$ which is a lift of $x\in \extw$.

The question on the nonemptiness criterion (and the dimension formula) for $X_x(b)$ first appeared in \cite[Problem 4.5]{Ra00} and some cases when $G=\GL_2$ were studied in \cite[Example 4.3]{Ra05}. For a more general setting, the first partial conjecture was posed by Reuman (\cite[Conjecture 7.1]{Reuman}) where he detected an element in $W_0$, now called $\eta_\sigma(x)$ (see \Cref{etasigma}), that gives a good amount of information on the nonemptiness (and even on the dimension formula). The conjecture was reformulated and proved partially in \cite{GHKR} and then completely by \cite[Theorem B]{GHN} in the basic case. For convenience, let us name the theorem \Cref{ghnb}.

We note first that there is an obvious obstruction for the nonemptiness. Recall the Kottwitz map (cf. \Cref{bgmu}) defined in \cite[4.5]{Kottwitz}. Then,\begin{center}if $X_x(b)\neq\emptyset$ then $\kappa_G(x)=\kappa_G(b)$,\end{center} because the Kottwitz map applied to the condition $g^{-1}b\sigma(g)\!\in\! I\dot{x}I$ results in $\kappa_G(b)\!=\!\kappa_G(\dot{x})\!=\!\kappa_G(x)$. Geometrically, this means simply `$x$ and $b$ are in the same connected component of the loop group $G(\breve{\bbQ}_p)$'.

We may and will reduce to the case where $G$ is simple, quasi-split, and adjoint (\Cref{sadlv}). Then, \Cref{ghnb} tells us that, under the condition called \textit{Shrunken Weyl chambers} (\Cref{shrunken}), there is only one interesting obstruction ($\sigmasupp(\etasigma(x))=\bbS$) other than the obvious one ($\kappa_G(x)=\kappa_G(b)$).

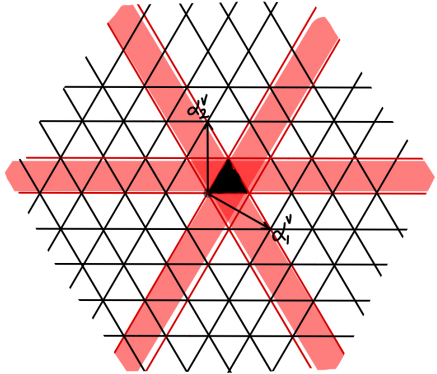
\begin{figure}[htbp]
\centering
\begin{tikzpicture}[scale=0.75]

\def\N{11}
\def\H{4.75}

\coordinate (v1) at (1,0);
\coordinate (v2) at (0.5,{sqrt(3)/2});
\coordinate (v3) at (-0.5,{sqrt(3)/2});

\begin{scope}
\clip
  ( \H,0)
  -- ({\H/2},{\H*sqrt(3)/2})
  -- ({-\H/2},{\H*sqrt(3)/2})
  -- (-\H,0)
  -- ({-\H/2},{-\H*sqrt(3)/2})
  -- ({\H/2},{-\H*sqrt(3)/2})
  -- cycle;


\fill[red!45]
  ($ -6*(v2) $) -- ($ 6*(v2) $) -- 
  ($ 6*(v2)+(v1) $) -- ($ -6*(v2)+(v1) $) -- cycle;

\fill[red!45]
  ($ -6*(v3) $) -- ($ 6*(v3) $) -- 
  ($ 6*(v3)+(v2) $) -- ($ -6*(v3)+(v2) $) -- cycle;

\fill[red!45]
  ($ -6*(v1) $) -- ($ 6*(v1) $) -- 
  ($ 6*(v1)+(v3) $) -- ($ -6*(v1)+(v3) $) -- cycle;


\foreach \k in {-\N,...,\N}{
  \draw[black!66,line width=0.1pt]
    ($ -\N*(v1) + \k*(v2) $) --
    ($  \N*(v1) + \k*(v2) $);
}
\foreach \k in {-\N,...,\N}{
  \draw[black!66,line width=0.1pt]
    ($ -\N*(v2) + \k*(v1) $) --
    ($  \N*(v2) + \k*(v1) $);
}
\foreach \k in {-\N,...,\N}{
  \draw[black!66,line width=0.1pt]
    ($ -\N*(v3) + \k*(v1) $) --
    ($  \N*(v3) + \k*(v1) $);
}

\end{scope}

\fill[black!100]
  (0,0) --
  (1,0) --
  (0.5,{sqrt(3)/2}) --
  cycle;

\draw[->, thick, blue!20!black, >=Stealth] (0,0) -- ($(v1) - (v3)$) node[pos = 0.95, right] {$\alpha_1^\vee$};
\draw[->, thick, blue!20!black, >=Stealth] (0,0) -- ($(v2) + (v3)$) node[pos = 1, above] {$\alpha_2^\vee$};

\draw[dashed, gray!40, line width=0.5pt,
      dash pattern=on 5pt off 10pt]
  ( \H,0)
  -- ({\H/2},{\H*sqrt(3)/2})
  -- ({-\H/2},{\H*sqrt(3)/2})
  -- (-\H,0)
  -- ({-\H/2},{-\H*sqrt(3)/2})
  -- ({\H/2},{-\H*sqrt(3)/2})
  -- cycle;

\fill[black] (0,0) circle (2pt);

\end{tikzpicture}
\caption{An apartment of the Bruhat--Tits building of $\mathrm{PGL}_3$.}
\label{apart}
\end{figure}

\setcounter{namedtheorem}{1}
\begin{namedtheorem}[B]\label{ghnb}
Let $b$ be basic. If $x$ lies in the shrunken Weyl chambers then\[X_x(b)\neq\emptyset\text{ if and only if }\kappa_G(x)=\kappa_G(b)\text{ and }\sigmasupp(\etasigma(x))=\bbS.\]
\end{namedtheorem}
\noindent
Here, $\bbS$ is the set of simple reflections of $W_0$ and $\sigmasupp$ is the map sending $w\in W_0$ to the minimal $\sigma$-stable subset of $\bbS$ containing all simple reflections from any reduced expression of $w$ (\Cref{suppdefn}). Shrunken Weyl chambers are, intuitively, the complement of the red strips (called \textit{critical strips}) in \Cref{apart} and the critical strips are defined to be the strips passing through the base alcove (the black triangle).

\subsubsection{A side remark on explicit criteria}Here, we clarify the term `explicit criterion' mentioned in \cite[1.2]{GHN} and will be used in this paper at times. As this is not related to the rest of the paper, one may skip this remark.

In \textit{loc.cit.}, we have the following theorem (name it \Cref{ghna}).

\setcounter{namedtheorem}{0}
\begin{namedtheorem}[A]\label{ghna}
Let $b$ be basic. Then, $X_x(b)\neq\emptyset$ if and only if, for all pairs $(J,w)$ such that $x$ is a $(J,w,\delta)$-alcove, the following holds:\[\kappa_{M_J}(w^{-1}x\delta(w))\in \kappa_{M_J}\left([b]\cap M_J(\breve{\bbQ}_p)\right).\]\end{namedtheorem}
\noindent The Levi subgroup of $G$ corresponding to $J\subset \bbS$ is denoted by $M_J$ and $\kappa_{M_J}$ is its Kottwitz map. We refer to \Cref{jwsalcove} for the term \textit{$(J,w,\delta)$-alcove}.

Practically, \Cref{ghnb} is used often in applications\footnote{See, for example, \cite[Theorem 1.1]{He21} and \cite[Remark 3.18]{MilVie}. Using our new explicit criterion, we will give more applications.} while \Cref{ghna} is not. However, they solve the same problem and, actually, \Cref{ghnb} is more restrictive. Taking that into account, we can see that \Cref{ghnb} is more applicable and explicit already. Let us now see an example explaining this more clearly.

For the sake of simplicity, let $G$ be split or residually split for a moment. Let $x$ be a translation element $t^\mu$. Then, \Cref{ghnb} directly implies that, if $t^\mu$ lies in the shrunken Weyl chambers then $X_{t^\mu}(b)=\emptyset$ always.\footnote{This is because $\etasigma(x)$ is the identity element $\id_{W_0}$. See \Cref{etasigma} for $\etasigma(x)$.} Moreover, our new (explicit) criterion will show that $X_{t^\mu}(b)\neq\emptyset$ if and only if $[t^\mu]=[b]\in B(G)$. This recovers \cite[Corollary 9.2.1]{GHKR} for $b$ basic.

On the other hand, it is not easy to find all possible pairs $(J,w)$ such that $x$ is a $(J,w,\delta)$-alcove (especially \Cref{jwsalcove} (2) is not easily manageable) even when $x=t^\mu$, which is a necessary step to apply \Cref{ghna}. In addition, it is not an easy take to compute the values in $\kappa_{M_J}([b]\cap M_J(\breve{\bbQ}_p))$ afterwards.

\subsection{Main Conjecture}
Our goal is to remove the shrunken Weyl chambers condition. We will suggest a general conjecture on an explicit nonemptiness criterion in the basic case and prove it for all but finitely many $x$'s and specify some classes of elements satisfying the conjecture. Let $b$ be basic.

There is a new assumption in our conjecture that the following example can justify. Let $x=\id_{\extw}$, the identity element in $\extw$, and $b=1$. Then, directly from the definition, $X_x(b)\neq\emptyset$. However, $\sigmasupp(\etasigma(x))=\emptyset$ as $\etasigma(x)=\id_{W_0}$. Hence, there are some exceptional $x$'s not having the $\sigmasupp$-obstruction.

In the following, we denote by $\widetilde{\sigmasupp}$ the $\sigma$-support function on $\extw$.\footnote{We found that it may cause confusion to use the same notations for two $\sigma$-support functions each defined on $W_0$ and $\extw$ and decided to use $\widetilde{\sigmasupp}$ instead of $\sigmasupp:\extw\to 2^{\widetilde{\bbS}}$.}

\begin{lem}
\label{firstlemma}
Assume that $\kappa_G(x)=\kappa_G(b)$. If $\widetilde{\sigmasupp}(x)\neq\widetilde{\bbS}$ then $X_x(b)\neq\emptyset$.
\end{lem}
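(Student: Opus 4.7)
The plan is to show that the hypothesis $\widetilde{\sigmasupp}(x)\neq\widetilde{\bbS}$ forces $[x]\in B(G)$ to be basic; once this is in hand, the Kottwitz assumption immediately identifies $[x]$ with $[b]$, and nonemptiness is essentially automatic.

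Write $x=u\tau$ uniquely with $u\in W_a$ and $\tau\in\Omega$. A standard unwinding of the definition of $\widetilde{\sigmasupp}$ via reduced expressions---each insertion of $\tau^{k}\tau^{-k}$ replaces a simple reflection $s$ by its conjugate $\tau^{-k}s\tau^{k}$---identifies the set of simple affine reflections appearing in some reduced expression of $x$ as
\[\widetilde{J}:=\bigcup_{k}\tau^{k}\,\supp(u)\,\tau^{-k}\subseteq\widetilde{\sigmasupp}(x).\]
This subset is $\mathrm{Ad}(\tau)$-stable by construction, and by hypothesis it omits at least one node of the affine Dynkin diagram; consequently $W_{\widetilde{J}}\subseteq W_a$ is a finite parabolic subgroup.

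Taking $n:=|\Omega|$ and expanding
\[x^{n}=u\cdot(\tau u\tau^{-1})\cdot(\tau^{2}u\tau^{-2})\cdots(\tau^{n-1}u\tau^{-(n-1)})\cdot\tau^{n},\]
the $\mathrm{Ad}(\tau)$-stability of $\widetilde{J}$ guarantees that each factor $\tau^{k}u\tau^{-k}$ lies in $W_{\widetilde{J}}$, while $\tau^{n}=1$; hence $x^{n}\in W_{\widetilde{J}}$, which is finite. Therefore $x$ itself has finite order in $\extw$, forcing its Newton point $\nu(x)$ to vanish, so $[x]\in B(G)$ is basic. Combined with $\kappa_G(x)=\kappa_G(b)$ and the classification of basic $\sigma$-conjugacy classes by the Kottwitz invariant (valid after the reductions of \Cref{sadlv}), this yields $[x]=[b]$. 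Choosing any $h\in G(\breve{\bbQ}_p)$ with $h^{-1}b\sigma(h)=\dot{x}\in I\dot{x}I$ then exhibits $hI\in X_x(b)$. The main technical content is the combinatorial step that forces $x$ to have finite order; thereafter the identification $[x]=[b]$ and the exhibition of an explicit point of $X_x(b)$ are formal.
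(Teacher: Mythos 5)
Your route is in outline the paper's route (show the Newton point of $x$ is trivial, then use $\kappa_G$ and the classification of basic classes to get $[\dot x]=[b]$ and exhibit a point of $X_x(b)$), but the pivotal step has a genuine gap: you compute \emph{ordinary} powers $x^n$ and infer from finite order of $x$ that $\nu_x$ vanishes. The Newton map on $\extw$ is defined through the $\sigma$-twisted products $x\sigma(x)\cdots\sigma^{N-1}(x)$, and when $\sigma$ acts nontrivially on $\extw$ (precisely the quasi-split non-split situations the $\sigma$-support is designed for), finite order of $x$ does not force $\nu_x=0$. Concretely, take $G=\mathrm{Res}_{E/F}\PGL_2$ with $E/F$ unramified quadratic, so $\extw=\extw_1\times\extw_2$ and $\sigma$ interchanges the two factors; the element $x=(s_0,s_1)$ has order two, yet $x\sigma(x)=(s_0s_1,s_1s_0)=(t^{\alpha^\vee},t^{-\alpha^\vee})$, and the twisted products grow linearly, so $\nu_x\neq0$ and $[\dot x]$ is not basic. (This $x$ has $\widetilde{\sigmasupp}(x)=\widetilde{\bbS}$, so it is not a counterexample to the lemma; it is a counterexample to your implication ``finite order in $\extw$ $\Rightarrow$ $\nu_x$ central''.) The source of the problem is that your set $\widetilde J=\bigcup_k\tau^k\supp(u)\tau^{-k}$ is only the $\mathrm{Ad}(\tau)$-closure: the Frobenius never enters your argument, whereas $\widetilde{\sigmasupp}(x)$ is by definition the minimal subset containing $\supp(u)$ stable under $\omega_x\sigma$, i.e.\ under $\Omega$-conjugation \emph{and} $\sigma$.

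That $\sigma$-stability is exactly what the proof needs, in two places. First, a proper $\omega_x\sigma$-stable subset of $\widetilde{\bbS}$ misses a node in every $\sigma$-orbit of components of the affine diagram, so $W_{\widetilde{\sigmasupp}(x)}$ is finite; properness of your merely $\tau$-stable $\widetilde J$ does not by itself give finiteness when the affine diagram is disconnected, which is allowed after the reduction of \cref{sadlv} ($G$ is only assumed $\sigma$-connected, e.g.\ a restriction of scalars). Second, and decisively, because $\widetilde{\sigmasupp}(x)$ is $\omega_x\sigma$-stable, the \emph{twisted} products $x\sigma(x)\cdots\sigma^{N-1}(x)$ have $W_a$-part in $W_{\widetilde{\sigmasupp}(x)}$, so for suitable $N$ they land in this finite group; hence $\nu_x$ is torsion and therefore central (zero, as $G$ is adjoint). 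This is the content of the paper's one-line argument. With that correction, your concluding steps --- basic classes are classified by the Kottwitz invariant, so $[\dot x]=[b]$, and any $h$ with $h^{-1}b\sigma(h)=\dot x$ gives $hI\in X_x(b)$ --- are fine and coincide with the paper's.
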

\begin{proof}
Let $\nu_x$ be the image of $x$ under the Newton map. The condition $\widetilde{\sigmasupp}(x)\neq\widetilde{\mathbb{S}}$ implies that $\nu_x$ is central because the group $W_{\widetilde{\sigmasupp}(x)}$ generated by the elements of $\widetilde{\sigmasupp}(x)$ is finite in such a case. However, if $\nu_x$ is central, for a representative $\dot{x}$, we have $\kappa(\dot{x})=\kappa(b)$ and $\bar{\nu}_{\dot{x}}=0=\bar{\nu}_b$ so that $[\dot{x}]=[b]\in B(G)$. Hence, $X_x(b)\neq\emptyset$.
\end{proof}
\Cref{firstlemma} shows that we only need to consider the case $\widetilde{\sigmasupp}(x)=\widetilde{\bbS}$. Our main conjecture is the following: (we follow the notations from \Cref{chtwo})

\begin{conj}\label{mainconj}
Let $G$ be a simple and quasi-split reductive group of adjoint type. Let $x$ be an element of Iwahori-Weyl group $\extw$ and $b\in\brG$ be basic. Assume that $\kappa_G(b)=\kappa_G(x)$ and $\widetilde{\sigmasupp}(x)=\widetilde{\mathbb{S}}$. Then,
\[X_x(b)\neq\emptyset\text{ if and only if }\sigmasupp(\sigma^{-1}(r)\eta_\sigma(x)r^{-1})=\mathbb{S}\text{ for all }r\in W_x,\]where $W_x$ is the subset of $W_0$ defined in \Cref{wxdefn}.
\end{conj}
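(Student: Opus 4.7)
The plan is to reduce \cref{mainconj} to \thmb{} by exploiting the $\sigma$-conjugation invariance of nonemptiness. For $r \in W_0$ with a lift $\dot{r} \in N_{\brG}(\breve{T})$, translating by $\dot{r}$ on cosets identifies $X_x(b)$ with $X_{r^{-1} x \sigma(r)}(b')$ where $[b'] = [b]$ in $B(G)$ (this is where basicness of $b$ is used). A direct computation on the decomposition $\extw = X_\ast(T) \rtimes W_0$ then shows that $\eta_\sigma$ transforms under this move as $\eta_\sigma(r^{-1} x \sigma(r)) = \sigma^{-1}(r)\, \eta_\sigma(x)\, r^{-1}$, which is precisely the expression appearing in the conjectural criterion. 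Consequently, every element $r \in W_x$ should provide a companion affine Deligne--Lusztig variety whose nonemptiness is governed by \thmb{}, and the conjunction of the resulting conditions as $r$ ranges over $W_x$ is exactly the right-hand side of \cref{mainconj}.

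For the necessity direction, I would interpret $W_x$ (\cref{wxdefn}) as parametrizing $\sigma$-conjugates $r^{-1} x \sigma(r)$ that lie in the shrunken Weyl chambers, or at least in a region where \thmb{} applies directly. If $X_x(b) \neq \emptyset$, the bijection above makes $X_{r^{-1} x \sigma(r)}(b')$ nonempty as well, so \thmb{} forces $\sigmasupp(\eta_\sigma(r^{-1} x \sigma(r))) = \bbS$, which via the transformation formula is the desired $\sigmasupp(\sigma^{-1}(r) \eta_\sigma(x) r^{-1}) = \bbS$. The Kottwitz-map obstruction $\kappa_G(x)=\kappa_G(b)$ is invariant under $\sigma$-conjugation and already assumed.

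The sufficiency direction is where the substantive work lies. The plan is to combine \thma{} with He's reduction method. First, reduce $x$ to a minimal length representative in its $\sigma$-conjugacy class (for which nonemptiness is preserved). Then, translate the hypothesis ``$\sigmasupp(\sigma^{-1}(r)\eta_\sigma(x)r^{-1}) = \bbS$ for all $r \in W_x$'' into the Levi-level Kottwitz condition $\kappa_{M_J}(w^{-1} x \delta(w)) \in \kappa_{M_J}([b] \cap M_J(\breve{\bbQ}_p))$ of \thma{} for every $(J,w,\delta)$-alcove pair. Because $(J,w,\delta)$-alcove data for an element outside the shrunken chambers force $w$ into certain parabolic cosets that are combinatorially controlled by $W_x$, this should reduce to a purely Coxeter-theoretic implication inside $\extw$ and its parabolic subgroups---this combinatorial equivalence is what I expect to be the technical core of the paper.

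The main obstacle is the handful of exceptional cases announced in the abstract. These are expected to arise in small-rank situations (types $A_2$, $B_2$, $G_2$, and perhaps a few low-rank twisted types) where $x$ lies so deep inside the critical strips that the set $W_x$ is simply too small to detect all $(J,w,\delta)$-alcove obstructions via conjugation into shrunken chambers, and where the combinatorial passage from the \cref{mainconj} criterion to the \thma{} criterion degenerates. For those residual elements one would fall back on direct verification with \thma{}, or construct explicit points of $X_x(b)$ by hand using the affine flag variety of the relevant small-rank group.
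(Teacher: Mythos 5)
The central step of your proposal fails: translation by a lift $\dot r$ of $r\in W_0$ does \emph{not} identify $X_x(b)$ with $X_{r^{-1}x\sigma(r)}(b')$. Writing $(g\dot r)^{-1}b\sigma(g\dot r)=\dot r^{-1}\bigl(g^{-1}b\sigma(g)\bigr)\sigma(\dot r)$, your bijection would require $\dot r^{-1}I\dot x I\sigma(\dot r)=I\,\dot r^{-1}\dot x\sigma(\dot r)\,I$, i.e.\ that $\dot r$ normalizes the Iwahori $I$; lifts of nontrivial elements of $W_0$ never do (only $\Omega$ does). At Iwahori level, nonemptiness of $X_x(b)$ is emphatically not an invariant of the $\sigma$-conjugacy class of $x$ in $\extw$: only He's Deligne--Lusztig reduction relations hold, and for length-decreasing conjugation they give ``or'' statements, not bijections. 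The same issue defeats your later step ``reduce $x$ to a minimal length representative in its $\sigma$-conjugacy class (for which nonemptiness is preserved)''. Relatedly, the transformation formula $\etasigma(r^{-1}x\sigma(r))=\sigma^{-1}(r)\etasigma(x)r^{-1}$ is not valid in general, since $\etasigma$ is computed from the dominant representative $v_x^{-1}x$, which changes uncontrollably under conjugation; in the paper the expression $\sigma^{-1}(r)\etasigma(x)r^{-1}$ arises algebraically, not from any equivariance of $\etasigma$. Finally, $W_x$ is not the set of $r$ conjugating $x$ into the shrunken chambers; it is defined purely root-theoretically by $r(\Phi^+\setminus\Phi_x)\subset\Phi^+$ (\cref{wxdefn}), where $\Phi_x$ records the critical strips through $x$. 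With the bijection gone, your necessity argument has no starting point, and your sufficiency sketch is only a plan, not an argument.

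For comparison: the statement is a conjecture, and the paper proves it only for all but finitely many $x$ (\cref{mainthm}). The sufficiency direction (full supports imply $X_x(b)\neq\emptyset$) is the one proved in complete generality, via a new structural result: $\Phi^+\setminus\Phi_x$ is radical and closed (\cref{mainprop1}), and any radical closed subset inside a parabolic closed subset can be simultaneously positivized (\cref{mainprop2}, using Djokovi\'c's classification); this yields that every $(J,w)_\sigma$-alcove datum for $x$ forces $\sigma^{-1}(r)\etasigma(x)r^{-1}\in W_J$ for some $r\in W_x$ (\cref{suppinJgen}), so full supports leave only $J=\bbS$ and the NLO criterion (\cref{NLOnonempty}) gives nonemptiness (\cref{righttoleftgen}). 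The necessity direction is where the exceptions live: one shows $x$ is a $(J_{r,x},v_xr^{-1})_\sigma$-alcove (\cref{jrxalcove}) and derives a contradiction from $\nu_{r\cdot\mu_x}\in Q^\vee_{J_{r,x},\bbQ}$, using that the pairings $\langle s_{i_1}\cdots s_{i_{j-1}}\alpha_{i_j},\mu_x\rangle$ are $0$ or $1$ for $r\in W_x$ together with strict positivity of the inverse Cartan matrix entries (\cref{oldpos}); this needs $\ell(x)\gg0$ (\cref{lefttorightgen}). Consequently the finitely many exceptional cases are elements of bounded length in \emph{every} type (with effective bounds, cf.\ \cref{Anresult}), not the small-rank groups $A_2$, $B_2$, $G_2$ you predict, so your proposed fallback misidentifies where the remaining difficulty actually lies.
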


\Cref{mainconj} claims that, except for the \Cref{firstlemma} cases, the critical obstruction for the nonemptiness is whether or not the $\sigma$-supports of certain $\sigma$-conjugates of $\etasigma(x)$ are all full. For convenience, we label by $\Leftarrow$ (resp. $\Rightarrow$) the `if' (resp. `only if') direction. Our main theorem is the following.
\begin{thm}\label{mainthm}Let $G$ be a reductive group as in \Cref{mainconj}. Then, the conjecture holds for all but finitely many $x$. More precisely, $\Leftarrow$ holds for all $x$ and $\Rightarrow$ holds for all but finitely many cases. In addition, $\Rightarrow$ holds for
\begin{enumerate}
    \item $x$ lying in exactly one critical strip (see \Cref{shrunken}),
    \item a translation element or $vt^\mu$-form element (for $\mu$ dominant), and
    \item when $G$ is of type $A_n$, and $x$ satisfies the condition of \Cref{Anresult}.
\end{enumerate}
\end{thm}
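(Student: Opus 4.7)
The plan is to split the biconditional and use \thmb~as the main workhorse for the shrunken case, reducing the non-shrunken case to it via an appropriate $\sigma$-conjugation. Nonemptiness of $X_x(b)$ depends only on the $\sigma$-conjugacy class of a lift of $x$ in $\brG$, and replacing $x$ by $\sigma^{-1}(r)xr^{-1}$ for any $r\in W_0$ preserves $X_x(b)\neq\emptyset$. The set $W_x$ should be designed so that the induced map $r\mapsto\sigma^{-1}(r)\etasigma(x)r^{-1}$ captures exactly those $\sigma$-conjugates whose $\etasigma$ is still well-defined and computable; the condition in \Cref{mainconj} is then the natural $W_x$-invariant strengthening of the GHN B condition $\sigmasupp(\etasigma(x))=\bbS$.

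For the $\Leftarrow$ direction I would exhibit an explicit $r\in W_x$ such that $\sigma^{-1}(r)xr^{-1}$ lies in the shrunken Weyl chambers, and then invoke \thmb~directly. Such an $r$ should always exist when $\widetilde{\sigmasupp}(x)=\widetilde{\bbS}$: the Newton point is then noncentral, so some $W_0$-twist pushes the alcove into a generic region of the apartment. Given such $r$, the hypothesis that $\sigmasupp(\sigma^{-1}(r)\etasigma(x)r^{-1})=\bbS$ feeds directly into \thmb~to yield a point of $X_{\sigma^{-1}(r)xr^{-1}}(b)$, and $\sigma$-conjugation returns a point of $X_x(b)$.

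For the $\Rightarrow$ direction I would argue contrapositively using \thma. If $\sigmasupp(\sigma^{-1}(r)\etasigma(x)r^{-1})$ is a proper $\sigma$-stable subset $J\subsetneq\bbS$ for some $r\in W_x$, then $\sigma^{-1}(r)xr^{-1}$ essentially lives in the affine Weyl group attached to the Levi $M_J$, which should be exactly the data needed to realize $x$ as a $(J,w,\delta)$-alcove for an appropriate $w$ built from $r$. One then computes the Kottwitz invariant of $w^{-1}x\delta(w)$ in $M_J$ and checks that it falls outside $\kappa_{M_J}([b]\cap M_J(\brG))$, violating the criterion of \thma. Converting $\sigma$-support data on $W_0$ into $(J,w,\delta)$-alcove data on $\extw$ is the central technical step and the engine for the three special cases: (1) crossing a single critical strip is a single simple reflection, so the strip-crossing is absorbed into $W_x$ and one reduces to a single application of \thmb; (2) for a translation $t^\mu$ or $vt^\mu$-element with $\mu$ dominant, one computes $\etasigma$ and $W_x$ explicitly and the criterion collapses to $[x]=[b]\in B(G)$, recovering \cite[Corollary 9.2.1]{GHKR}; (3) for type $A_n$ the permutation-matrix model turns $\sigma$-supports into cycle-combinatorial conditions that can be checked directly under the hypothesis of \cref{Anresult}.

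The main obstacle will be the $\Rightarrow$ direction when $x$ sits at the intersection of two or more critical strips, where neither case (1) nor \thmb~applies. A natural remedy is induction on the number of strips through $x$ (equivalently, on the distance from the shrunken region): partial $\sigma$-conjugation by a carefully chosen simple reflection should reduce to an element lying in strictly fewer strips while tracking the change in $\etasigma$ via $W_x$. The ``finitely many exceptional cases'' flagged in the theorem correspond to the base of this induction, where the length of $x$ is so small that neither the induction nor the three structural cases bite. Handling them should amount to a finite, Dynkin-type-by-Dynkin-type enumeration of Weyl group elements of bounded length together with a direct application of \thma.
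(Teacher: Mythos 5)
The proposal rests on a false reduction. You assert that ``replacing $x$ by $\sigma^{-1}(r)xr^{-1}$ for any $r\in W_0$ preserves $X_x(b)\neq\emptyset$,'' and both directions of your argument lean on it: for $\Leftarrow$ you want to push $x$ into the shrunken chambers by such a twist and quote \thmb; for the special cases (1)--(3) you again ``absorb the strip-crossing into $W_x$'' and reduce to \thmb. But nonemptiness of $X_x(b)$ is \emph{not} invariant under $\sigma$-conjugation of $x$ inside $\extw$: conjugation changes the Iwahori double coset $I\dot{x}I$, and He's reduction method only gives such invariance for length-preserving steps (if $\ell(sx\sigma(s))<\ell(x)$ one gets a relation between $X_x(b)$, $X_{sx}(b)$ and $X_{sx\sigma(s)}(b)$, not an equivalence). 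If the invariance you claim were true, the nonemptiness criterion would depend only on the $\sigma$-conjugacy class of $x$ in $\extw$, which already contradicts \thmb. Moreover, an element of bounded length lying in a critical strip can never be moved into the shrunken chambers by a finite twist, so even the geometric picture behind your $\Leftarrow$ step does not hold. Consequently the proposal proves neither direction.

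The paper's actual mechanism is different and does not pass through \thmb\ at all outside the shrunken region. For $\Leftarrow$ it uses the NLO criterion (\cref{NLOnonempty}, i.e.\ GHN Thm.\ 4.4.7, valid for every $x$) together with the new structural input: if $x$ is a $(J,w)_\sigma$-alcove then $\sigma^{-1}(r)\etasigma(x)r^{-1}\in W_J$ for some $r\in W_x$ (\cref{suppinJgen}), which is deduced from the fact that $\Phi^+\setminus\Phi_x$ is radical closed (\cref{mainprop1}) and the classification-based \cref{mainprop2}; the full-support hypothesis then forces $J=\bbS$ and NLO is satisfied. For $\Rightarrow$ one shows $x$ is itself a $(J_{r,x},v_xr^{-1})_\sigma$-alcove (\cref{jrxalcove}) and derives emptiness from the Newton-point relation $\nu_{r\cdot\mu_x}\in Q^\vee_{J_{r,x},\bbQ}$ plus the positivity of coweight coefficients (\cref{oldpos}), which needs $\ell(x)\gg0$ -- this is exactly where the finitely many exceptions arise, and they are \emph{not} disposed of by a finite enumeration as you suggest; they remain open. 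Your sketch for $\Rightarrow$ gestures at the right object (a $(J,w)$-alcove built from $r\in W_x$) but supplies no mechanism for the Kottwitz/Newton obstruction, and cases (1)--(3) in the paper require genuinely separate arguments (inverse Cartan matrix positivity and a case analysis for one strip; vanishing of the correction terms $\langle s_{i_1}\cdots s_{i_{j-1}}\alpha_{i_j},\mu_x\rangle$ for $vt^\mu$; a minimal-element argument with Nie's lemma in type $A_n$), none of which is a ``single application of \thmb.''
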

Note that \Cref{ghnb} takes care of infinitely many $x$'s, but it does not apply to infinitely many cases as well. We prove $\Leftarrow$ in \Cref{righttoleftgen} in full generality and prove $\Rightarrow$ in \Cref{lefttorightgen} for $x$ such that $\ell(x)\gg0$. An effective bound for the length can be computed and we do so for type $A_n$ in \Cref{Anresult}.

Let us discuss \Cref{mainthm} (3) in more detail. For simplicity, let $G=\PGL_n$. There are two assumptions in \Cref{Anresult}. The first assumption is that $\kappa_G(x)=0\in\mathbb{Z}/n\simeq X^*(Z(\hat{G})^\Gamma)$. It means that $x$ can be written as $t^\lambda w$ where $\lambda=(\lambda_1,\cdots,\lambda_n)\in \mathbb{Z}^n_{\text{sum}=0}$ and $w\in S_n$ (symmetric group). And then the second assumption is that `$\max_i{\lambda_i}>1$ and $\min_i{\lambda_i}<-1$'.

On the other hand, any element $x=t^\lambda w$ satisfying $\widetilde{\sigmasupp}(x)\neq\widetilde{\bbS}$ (cf. \Cref{firstlemma}) has the property `$\max_i\lambda_i=1$ and $\min_i\lambda_i=-1$'. Hence, there is only a small gap left in the conjecture under the assumption that $\kappa_G(x)=0$. This way, we can view \Cref{mainthm} (3) as an evidence for \Cref{mainconj} in the sense that elements in \Cref{firstlemma} are the only exceptions.

\begin{rem}
It is worth pointing out that \Cref{ghnb} and \Cref{mainthm} for $x$ lying in the shrunken Weyl chambers are \textit{almost} the same, but there is a subtle difference. For example, when $x=w_0$ (the longest element of $W_0$) as an element of $\extw$, we apply \Cref{ghnb} to get $X_x(1)\neq\emptyset$. However, $x$ does not fit into \Cref{mainthm} as $\widetilde{\sigmasupp}(x)\neq\widetilde{\bbS}$, so we should apply \Cref{firstlemma} to get $X_x(1)\neq\emptyset$ here.
\end{rem}

\begin{rem}
One might wonder what the condition `$\widetilde{\sigmasupp}(x)=\widetilde{\bbS}$' means. Surprisingly, a simple concrete picture exists for this condition in the sense that ``$\widetilde{\sigmasupp}(x)\neq\widetilde{\bbS}$ if and only if the action of $x\circ\sigma$ fixes a point in the closure of the base alcove''. Moreover, under the obvious obstruction (that is, $\kappa_G(x)=\kappa_G(b)$) when $b$ is basic, it is equivalent to that $I\dot{x}I\subset [b]$. (\cite[Proposition 5.6 and Lemma 5.8]{GHN19})
\end{rem}

Surprisingly, our result says that the abstract criterion (\Cref{ghna}) can be made much stronger as follows. This was observed by Sian Nie.

\begin{thm}[Stronger \Cref{ghna}]\label{thma}
    Let $b$ be basic and suppose that $\widetilde{\sigmasupp}(x)=\widetilde{\bbS}$ with $\ell(x)\gg0$. Then, $X_x(b)\neq\emptyset$ if and only if $x$ is not a $(J,w)_\sigma$-alcove for any proper $J\subsetneq\bbS$.
\end{thm}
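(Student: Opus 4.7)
The plan is to derive \Cref{thma} as a consequence of \Cref{mainthm} together with a combinatorial translation between the $\sigma$-support condition on conjugates of $\eta_\sigma(x)$ and the existence of a proper $(J,w)_\sigma$-alcove structure on $x$. Under the hypotheses $\widetilde{\sigmasupp}(x)=\widetilde{\bbS}$ and $\ell(x)\gg 0$, \Cref{mainthm} tells us that for basic $b$ with $\kappa_G(x)=\kappa_G(b)$, nonemptiness of $X_x(b)$ is equivalent to
\[
\sigmasupp\bigl(\sigma^{-1}(r)\,\eta_\sigma(x)\,r^{-1}\bigr)=\bbS \quad\text{for every } r\in W_x.
\]
So the task reduces to showing that the \emph{failure} of this condition at some $r\in W_x$ is equivalent to $x$ being a $(J,w)_\sigma$-alcove for some proper $J\subsetneq\bbS$ and some $w\in W_0$. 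The ``obvious'' Kottwitz obstruction $\kappa_G(x)=\kappa_G(b)$ corresponds in \thma{} to the non-proper case $J=\bbS$, so it plays no role in the stronger statement and is absorbed into the hypothesis.

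For the direction ``proper alcove $\Rightarrow$ failure of full support'', I would unpack \cref{jwsalcove}: being a $(J,w,\sigma)$-alcove forces $w^{-1}x\sigma(w)$ to lie in the Iwahori cell indexed by an element with Iwahori-Weyl data supported in the parabolic attached to $J$. Projecting to the finite Weyl group via the $\eta_\sigma$-construction, this should translate into the statement that the explicit conjugate $\sigma^{-1}(w)\eta_\sigma(x)w^{-1}$ admits a reduced expression using only simple reflections in $J$, so that its $\sigma$-support is contained in $J\subsetneq \bbS$. One then checks that $w$ (or a canonical adjustment of it) is an element of $W_x$, so the condition in \Cref{mainconj} fails at $r=w$.

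Conversely, given $r\in W_x$ with $J\colonequals\sigmasupp(\sigma^{-1}(r)\eta_\sigma(x)r^{-1})\subsetneq\bbS$, the set $J$ is automatically $\sigma$-stable by the definition of $\sigmasupp$. The defining property of $W_x$ (\cref{wxdefn}) provides a lift of $r$ to $\extw$ with respect to which the finite part of $x$ is controlled by $J$; the task is to upgrade this finite-level statement to the full Iwahori-Weyl positivity condition in \cref{jwsalcove}, thereby producing a $w$ making $x$ a $(J,w)_\sigma$-alcove. This is essentially a chamber-by-chamber verification once $\eta_\sigma$ is described in terms of the action of $x\sigma$ on Weyl chambers, and the hypothesis $\ell(x)\gg 0$ ensures that no ``short-length'' pathologies obstruct the lift.

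The main obstacle is the precise dictionary between $W_x$ and the set of $w$'s realizing $x$ as a $(J,w)_\sigma$-alcove. The definition of $W_x$ was engineered to capture exactly the twists relevant for the $\sigma$-support obstruction, but matching these twists with valid alcove witnesses on the nose — in both directions — requires careful bookkeeping of the translation and finite parts in the Iwahori-Weyl decomposition of $x$, together with the behaviour of $\sigma$ on $\bbS$. I expect this is precisely where the hypothesis $\ell(x)\gg 0$ becomes essential, mirroring its role in the $\Rightarrow$ direction of \Cref{mainthm}.
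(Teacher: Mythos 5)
Your overall strategy matches the paper's: derive the theorem from \Cref{maintwo} together with a dictionary between the $\sigma$-support condition on $W_x$-twisted conjugates of $\eta_\sigma(x)$ and the existence of a proper $(J,w)_\sigma$-alcove structure. The paper realizes this dictionary precisely via \Cref{suppinJgen} (proper alcove gives $r\in W_x$ with $J_{r,x}\subset J$) and \Cref{jrxalcove} ($x$ is a $(J_{r,x},v_x r^{-1})_\sigma$-alcove), so the proof is a two-line combination of already-proved lemmas.

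However, two of your specific claims are off. First, in the direction ``proper alcove $\Rightarrow$ failure of full support'', you assert that $w$ (or a ``canonical adjustment'' of it) lies in $W_x$. That is not how the dictionary works: the element $r\in W_x$ witnessing $J_{r,x}\subsetneq\bbS$ is \emph{not} $w$, and is not obtained by a simple normalization of $w$. It is produced by \Cref{mainprop2}, the structural result on radical and parabolic closed subsets of $\Phi$, applied to $\Phi^+\setminus\Phi_x$ inside $v_x^{-1}w(\Phi^+\cup\Phi_J^-)$; one then gets $r\in W_0$ with $r(\Phi^+\setminus\Phi_x)\subset\Phi^+\subset rv_x^{-1}w(\Phi^+\cup\Phi_J^-)$, so $r\in W_x$ and $rv_x^{-1}w\in W_J$. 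Treating this as bookkeeping elides the single most substantive input (and the reason the paper develops \Cref{mainprop1} and \Cref{mainprop2} at all). Second, you locate the hypothesis $\ell(x)\gg0$ in the alcove--support dictionary; in fact both \Cref{suppinJgen} and \Cref{jrxalcove} hold for arbitrary $x$. The length hypothesis enters only through \Cref{maintwo} (ultimately through \Cref{lefttorightgen}, where it guarantees the coroot coefficients of $\nu_{r\cdot\mu_x}$ stay positive). So the conditional ``I expect this is precisely where $\ell(x)\gg0$ becomes essential'' points at the wrong step.
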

We note that the condition `$\ell(x)\gg0$' can be removed once \Cref{mainconj} is fully proved. In short, for $x$ not intersecting with the base alcove, $X_x(b)\neq\emptyset$ if and only if $x$ is not ``coming from a proper Levi subgroup''.

\subsection{New Ideas and Sketch of Proof}The novelty of our work lies in the introduction of the set $W_x$ which arises naturally in the following sense.

The idea is that a critical strip behaves as if it belongs to the shrunken Weyl chambers \textit{adjacent} to the strip. Hence, if $x$ lies in a critical strip, we can ``embed'' $x$ into each shrunken Weyl chamber adjacent to the strip. Noting that, when $x$ lies in a shrunken Weyl chamber, the rule for $X_x(b)\neq\emptyset$ is $\sigmasupp(\etasigma(x))=\bbS$, we can presuppose that, in general, the rule for $X_x(b)\neq\emptyset$ would be $\sigmasupp(\etasigma(x'))=\bbS$ for each embedding $x'$ of $x$ into each shrunken Weyl chamber adjacent to the strips. 

The set $W_x$ is defined to be the set of elements in $W_0$ that, intuitively, embed $x$ into those shrunken Weyl chambers. Practically, for $x$ in the shrunken Weyl chambers, we get $W_x=\{\id_{W_0}\}$ and, for $x$ in one critical strip (cf. \Cref{mainthm} (1)), we have $W_x=\{\id_{W_0},s\}$ where $s$ is the simple reflection related to the critical strip containing $x$.

In order to define $W_x$, we make a new observation on a certain structure of the set of critical strips containing $x$ (\Cref{mainprop1}). More precisely, let $\Phi$ be the set of (relative) roots of $G$ and, for simplicity, $x$ lie in the dominant Weyl chamber. Next, denote by $\Phi_x$ the set of positive roots whose critical strip contains $x$. Then, $\Phi_x$ is ``anti-closed'' in the sense that if $\alpha+\beta\in\Phi_x$ for two positive roots $\alpha$ and $\beta$ then $\alpha\in \Phi_x$ or $\beta\in\Phi_x$. Alternatively, it is equivalent to that the complement of $\Phi_x$ in the set of positive roots is closed.

Using this, we show that the set $W_x$ is well-defined (\Cref{mainprop2}) and that $W_x$ contains exactly the elements needed for the `$\sigma$-support test' (cf. \Cref{suppinJgen}). After that, we follow the strategy of \cite{GHN} as explained in \Cref{knownresult} with more careful study on the $W_x$-action on dominant cocharacters. We also use the positivity of the coroot-coefficients of dominant cocharacters (see \Cref{oldpos}) and work with the exact (positive) coefficients in the proof of \Cref{mainthm} (1) and (3).

\begin{rem}
There has been an interesting coincidence happening while we have been working on this problem. Felix Schremmer (\cite{Sch}) recently defined a set called `length-positive' in his work on the generic Newton point. Our work and Schremmer's work do not overlap, that is, the results are rather complementary. However, the length-positive set of Schremmer $LP(x)$ is closely related to the set $W_x$ of ours. (See \Cref{LPandWx}.)
\end{rem}

\subsection{Applications and future works}\label{applications}

\subsubsection{The set $B(G)_x$ and cordial elements}
The question of whether $X_x(b)$ is nonempty is equivalent to the question of whether $I\dot{x}I\cap [b]$ is nonempty. In this perspective, we can denote the set of $[b]\in B(G)$ such that $I\dot{x}I\cap[b]\neq\emptyset$ by $B(G)_x$ and ask to describe it. This approach first appeared in \cite{Bea}.

\begin{rem}
For clarification, we remark that the main difference between the approach using $B(G)_x$ and ours is which variable is fixed. In our approach, we fix $b$ and study the nonemptiness, but the study of $B(G)_x$ fixes $x$. As we will see in a moment, these two approaches are complementary.
\end{rem}

When $x$ is cordial (see \Cref{cordial}), the set $B(G)_x$ has the property called saturated (see \Cref{saturated}), which makes the complete description of $B(G)_x$ easier. Combining this with \Cref{mainthm} (3), we obtain a full description of $B(G)_x$ in some special cases as follows.

\begin{thm}\label{bgx}
Let $x=vt^\mu$ for a dominant non-central\footnote{If $\mu$ is central, it is obvious that $B(G)_x=\{[t^\mu]\}=B(G,\mu)$ via \Cref{firstlemma}.} $\mu$ and $v\in W_0$, and let $W_0(\mu)$ be the stabilizer subgroup of $W_0$ fixing $\mu$. Then,\[W_x=\{r\in W_0{(\mu)}: \ell(vr^{-1})=\ell(v)+\ell(r)\}.\]Now, if $\sigmasupp(\sigma^{-1}(r)\etasigma(x)r^{-1})=\bbS$ for all $r\in W_x$ then $B(G)_x=B(G,\mu)$.
\end{thm}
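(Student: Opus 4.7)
The theorem has two parts: the explicit formula for $W_x$, and the identification $B(G)_x=B(G,\mu)$ under the support hypothesis. I plan to prove them separately, then combine.

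For the first part, the plan is to compute the set $\Phi_x$ of positive roots whose critical strip contains $x$ (cf.\ \Cref{mainprop1}) directly for $x=vt^\mu$, and then pass to $W_x$ via \Cref{wxdefn}. Geometrically, the image of the base alcove under $x$ is obtained by translating the base alcove by $\mu$ and then applying the finite-Weyl factor $v$. The critical strip of a positive root $\alpha$ contains this image only if (i) $\langle\alpha,\mu\rangle=0$, since otherwise the alcove is displaced by $|\langle\alpha,\mu\rangle|\geq 1$ from the hyperplane $H_\alpha$ and leaves the bounded strip, and (ii) $v^{-1}(\alpha)\in\Phi^+$, reflecting that the finite-Weyl factor does not flip the alcove across $H_\alpha$. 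Thus $\Phi_x$ consists of the $v$-positive roots lying in the root subsystem of $\Phi$ orthogonal to $\mu$, which immediately forces $W_x\subseteq W_0(\mu)$. The further requirement that $r\in W_x$ amounts to asking that the whole inversion set of $r$ lie inside $\Phi_x$, i.e., that $v^{-1}\alpha\in\Phi^+$ for every $\alpha$ in the inversion set of $r^{-1}$ (viewed inside $W_0(\mu)$). This is the standard Coxeter reformulation of the length-additivity identity $\ell(vr^{-1})=\ell(v)+\ell(r)$.

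For the second part, I combine three ingredients. First, Mazur's inequality yields $B(G)_x\subseteq B(G,\mu)$ immediately, since $x\in W_0 t^\mu W_0\subseteq\Adm(\mu)$. Second, the support hypothesis plus \Cref{mainthm}(2) places the basic class in $B(G)_x$. Third, the element $x=vt^\mu$ with $\mu$ dominant is cordial in the sense of \Cref{cordial}; this is the standard length identity $\ell(x)=\ell(v)+\ell(t^\mu)=\ell(v)+\langle 2\rho,\mu\rangle$ matched against the Newton point of $x$ attaching $\mu$ to the rational cone. Cordiality then forces $B(G)_x$ to be saturated (\Cref{saturated}). Since $B(G,\mu)$ is itself the saturated interval in the Newton poset with minimum the basic class and maximum $[t^\mu]$, and since both endpoints belong to $B(G)_x$ (the basic one just proved, and $[t^\mu]$ because $t^\mu\in W_0 x W_0$), saturation propagates through the interval and gives $B(G)_x\supseteq B(G,\mu)$.

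The main obstacle is Part~1: translating the geometric condition ``the critical strip of $\alpha$ contains $x$'' cleanly into the length-positivity identity. The delicate step is the sign bookkeeping that determines which side of $H_\alpha$ the image of the base alcove under $x$ lies on, and it is precisely this sign which selects the condition $v^{-1}\alpha\in\Phi^+$ over $v^{-1}\alpha\in\Phi^-$; I expect this to require a careful direct check using the explicit action of $vt^\mu$ on the apartment. Once Part~1 is in place, Part~2 becomes a routine packaging of \Cref{mainthm}, cordiality of $vt^\mu$-form elements, and saturation.
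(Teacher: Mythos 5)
Your overall architecture matches the paper's proof (compute $\Phi_x$ for $x=vt^\mu$, characterize $W_x$ by inversion sets, then use \Cref{mainthm}~(2) for the basic class and cordiality plus saturation for the interval), but two of your justifications do not hold up. The serious one is in Part~2: the claim ``$x\in W_0t^\mu W_0\subseteq\Adm(\mu)$'' is false, and it fails for exactly the elements at issue. For dominant $\mu$ one has $\ell(vt^\mu)=\ell(v)+\langle 2\rho,\mu\rangle$, whereas every element of $\Adm(\mu)$ has length at most $\ell(t^\mu)=\langle 2\rho,\mu\rangle$; hence $vt^\mu\notin\Adm(\mu)$ whenever $v\neq\id$ (already visible in rank one: $\ell(st^{\alpha^\vee})=3>2$). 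So Mazur's inequality at parahoric level does not give $B(G)_x\subseteq B(G,\mu)$ along this route. Similarly, ``$[t^\mu]\in B(G)_x$ because $t^\mu\in W_0xW_0$'' is a non sequitur: $B(G)_x$ is defined via the Iwahori double coset $I\dot{x}I$, and membership of $t^\mu$ in a finite-Weyl double coset of $x$ says nothing about $I\dot{x}I\cap[t^\mu]$. Both missing facts --- the upper bound $[b]\le[t^\mu]$ for all $[b]\in B(G)_x$ and the membership $[t^\mu]\in B(G)_x$ --- are precisely the identification of the generic $\sigma$-conjugacy class $[b_x]=[t^\mu]$, which, together with cordiality of $vt^\mu$, is what the paper imports from \cite[Proposition 4.2]{He21}. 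Your proposed derivation of cordiality from the length identity $\ell(x)=\ell(v)+\langle2\rho,\mu\rangle$ is not sufficient either, since \Cref{cordial} also requires knowing $\nu_{b_x}$ and $\defe(b_x)$, i.e.\ again the generic class. Once these inputs are supplied (by proof or citation), your saturation argument coincides with the paper's.

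In Part~1 the route is the paper's (the paper reads off $\Phi_x=\{\alpha\in\Phi^+:\langle\alpha,\mu\rangle=0\text{ and }v\alpha\in\Phi^+\}$ from the computation in the proof of \Cref{mainthm}~(2) and then lists the inversions of $r$ along a reduced word), but your sign bookkeeping is wrong as written. With the convention of \Cref{wxdefn} ($\alpha\in\Phi_x$ iff $x\in C_{v\alpha}$), the second condition is $v\alpha\in\Phi^+$, not $v^{-1}\alpha\in\Phi^+$; and $r\in W_x$ iff the inversion set of $r$ (not of $r^{-1}$) lies in $\Phi_x$, i.e.\ $v\gamma\in\Phi^+$ for every inversion $\gamma$ of $r$, which is the correct Coxeter reformulation of $\ell(vr^{-1})=\ell(v)+\ell(r)$. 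The condition you state, $v^{-1}\alpha\in\Phi^+$ for every inversion $\alpha$ of $r^{-1}$, is instead equivalent to $\ell(v^{-1}r)=\ell(v)+\ell(r)$, which defines a different subset of $W_0(\mu)$ in general. You flagged this sign check as the delicate step, and it is fixable, but as proposed the first half does not yet yield the displayed formula for $W_x$.
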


\subsubsection{Future works on new dimension formulas}

The following dimension formula (\cite{He14}) is known for $x$ lying in the shrunken Weyl chambers:\[\dim X_x(b)=\frac{1}{2}\left(\ell(x)+\ell(\etasigma(x))-\defe_G(b)\right).\]
On the contrary, outside of the shrunken Weyl chambers, even a conjectural formula for $\dim X_x(b)$ is still mysterious. However, if more nonemptiness results are found, we can approach this problem with the following recursive formula (\cite[Proposition 4.2]{He14}): For $s\in \bbS$ satisfying $\ell(sx\sigma(s))=\ell(x)-2$,
\[\dim X_x(b)=\max\{\dim X_{sx}(b),\dim X_{sx\sigma(s)}(b)\}+1.\]
Typically, this is a bottom-up formula for the length reason (that is, $\ell(x)=\ell(sx)+1=\ell(sx\sigma(s))+2$). However, for example, if $X_{sx\sigma(s)}(b)$ is empty then we can also compute $\dim X_{sx}(b)$ from $\dim X_x(b)$, which is top-down.

In future work, using \Cref{mainthm} (2), we will show the following dimension formula which is new even in the rank two case:
\begin{thm}
Let $G$ be residually split with $\rk\! G_\textrm{sc}=2$ and $b$ be basic. For $x\in\extw$ lying in only one critical strip (associated to $v\alpha$), if $X_x(b)\neq\emptyset$ then \[\dim X_x(b)=\frac{1}{2}\left(\ell(x)+\min\{\ell(\eta_\sigma(x)),\ell(\sigma^{-1}(s_\alpha)\eta_\sigma(x)s_\alpha)\}-\defe_G(b)\right)-\epsilon,\]where $\epsilon=1$ if $\etasigma(x)=w_0$ and $\epsilon=0$ otherwise.
\end{thm}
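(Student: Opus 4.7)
The proof would apply He's recursive dimension formula to reduce $x$, which sits in a single critical strip just outside the shrunken Weyl chambers, to two elements in the two adjacent shrunken Weyl chambers where He's closed dimension formula applies. The nonemptiness inputs for the recursion are furnished by \Cref{mainthm}~(2), since in rank two the reduced elements can be arranged to be of $vt^\mu$-form.

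The first step is to locate the reduction. Because $x$ lies in exactly one critical strip associated to the root $v\alpha$, the alcove $x$ is adjacent to two shrunken Weyl chambers, one on each side of the strip. In rank two, one can find a simple reflection $s\in\bbS$ with $\ell(sx\sigma(s))=\ell(x)-2$ such that $sx$ lies in one of these shrunken chambers and $sx\sigma(s)$ in the other; the existence of such $s$ comes from the two-sided descent forced by $x$ lying in exactly one strip, and a short rank-two geometric check confirms that both reduced elements leave the critical region entirely (rather than slipping into another strip).

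The second step, and the combinatorial heart of the argument, is to identify the $\etasigma$-values of the reduced elements. Using the interpretation of $\etasigma$ via the finite part of the alcove relative to the dominant chamber, together with the fact that crossing the wall of a critical strip conjugates this finite part by $s_\alpha$ on the $\sigma$-twisted side, one expects
\[
\{\etasigma(sx),\,\etasigma(sx\sigma(s))\}=\{\etasigma(x),\,\sigma^{-1}(s_\alpha)\etasigma(x)s_\alpha\},
\]
with the precise pairing depending on the side into which the reduction pushes. Applying the shrunken-chamber formula to each and plugging into $\dim X_x(b)=\max\{\dim X_{sx}(b),\dim X_{sx\sigma(s)}(b)\}+1$ then gives
\[
\dim X_x(b)=\max\!\left\{\tfrac{1}{2}(\ell(x)-1+\ell_1-\defe_G(b)),\ \tfrac{1}{2}(\ell(x)-2+\ell_2-\defe_G(b))\right\}+1,
\]
where $\{\ell_1,\ell_2\}=\{\ell(\etasigma(x)),\ell(\sigma^{-1}(s_\alpha)\etasigma(x)s_\alpha)\}$. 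The $\sigma$-twisted conjugation satisfies $|\ell_1-\ell_2|\le 2$, and combining with the $-1,-2$ offsets inside and the $+1$ outside the $\max$ shows that the $\max$ collapses to the $\min$ of the two $\eta$-lengths, yielding the stated formula in the generic case $\etasigma(x)\ne w_0$.

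Finally, the exceptional case $\etasigma(x)=w_0$ needs a separate analysis, since one of $sx$ or $sx\sigma(s)$ is forced onto an additional critical hyperplane where the shrunken-chamber formula does not apply without correction. A top-down application of the recursive formula, starting from a shrunken element of slightly higher length whose nonemptiness again follows from \Cref{mainthm}~(2), combined with a direct count in each of the three rank-two types $A_2$, $B_2/C_2$, $G_2$, will produce the $\epsilon=1$ correction. The main obstacle will be the $\etasigma$-identification of the second step: tracking how the finite part transforms under a rank-lowering $\sigma$-conjugation and verifying that the pairing lines up with the two candidates appearing in \Cref{mainconj} requires a careful case-by-case check across the rank-two types and across each simple-root-indexed strip, with the interaction between $s_\alpha$-conjugation and the $\sigma$-action being the most delicate point.
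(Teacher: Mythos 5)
The paper never actually proves this statement: it is announced in \S1.5.2 as \emph{future work}, with only the intended ingredients indicated (He's recursion $\dim X_x(b)=\max\{\dim X_{sx}(b),\dim X_{sx\sigma(s)}(b)\}+1$, the shrunken-chamber formula, and \Cref{mainthm}~(2)), so there is no written proof to measure you against. Your outline uses the same ingredients, but as written it has gaps that would make it fail.

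Concretely: (i) the identification $\{\etasigma(sx),\etasigma(sx\sigma(s))\}=\{\etasigma(x),\sigma^{-1}(s_\alpha)\etasigma(x)s_\alpha\}$ cannot hold for the one-sided branch. For $s\in\bbS$ one has $v_{sx}=sv_x$, $\mu_{sx}=\mu_x$, $w_{sx}=w_x$, hence $\etasigma(sx)=\sigma^{-1}(w_x)\,s\,v_x$, whose length has the \emph{opposite} parity to $\ell(\etasigma(x))$; the same parity flip occurs for the affine simple reflection, since the finite part of $x$ gets multiplied by a single reflection. Both of your candidates have lengths of the same parity as $\ell(\etasigma(x))$, so $\etasigma(sx)$ is never one of them. (ii) Even granting the identification, the arithmetic does not close: if both branches are nonempty and shrunken, the recursion gives $\tfrac12\bigl(\ell(x)-\defe_G(b)+\max\{\ell_1+1,\ell_2\}\bigr)$, and $\max\{\ell_1+1,\ell_2\}>\min\{\ell_1,\ell_2\}$ always, so "the max collapses to the min" is impossible. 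The minimum in the stated formula can only arise because one branch is empty or does not satisfy the shrunken formula; that is precisely where the new nonemptiness criterion (for elements in a critical strip, \Cref{mainone}, or the top-down use of the recursion when a branch is empty) must be invoked to kill a branch, and you never establish any emptiness — you cite \Cref{mainthm}~(2) only to assert nonemptiness, which is the opposite of what is needed. (iii) The geometric claim in your first step is false for finite $s$: left multiplication by $s\in\bbS$ maps critical strips to critical strips (the condition $0<\langle\,\cdot\,,\beta\rangle<1$ becomes $0<\langle\,\cdot\,,s\beta\rangle<1$), so $sx$ again lies in exactly one critical strip and the shrunken formula does not apply to it; only the affine reflection can push an alcove out of its strip, and that requires an actual rank-two verification which is missing. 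A correct argument must track which $\sigma$-conjugate of $\etasigma(x)$ has full $\sigma$-support, use the emptiness half of the criterion to discard one branch of the recursion, and treat the $\etasigma(x)=w_0$ case (the source of $\epsilon=1$) by a genuinely separate analysis.
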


\subsection{Organization}We describe the organization here.

\S2 is a preliminary section. We recall the setup of \cite{GHN} and note down some computations related to critical strips. Following \textit{loc.cit.}, we summarize the proof of \Cref{ghnb} and collect the lemmas subject to be generalized.

In \S3, we introduce the set $W_x$. Then, we discuss some properties of closed subsets of a root system (\cite{Djokovic}). Then we prove \Cref{mainthm} and \Cref{mainthm} (2).

In \S4, we prove \Cref{mainthm} (1) by handling some possibly exceptional cases. We study in detail using the exact positive coefficients from \cite{OnishchikVinberg}.

In \S5, we wrap up some tedious computations postponed in \S3 and \S4 proving \Cref{mainthm} (3). In the application part, we recall some facts from \cite{MilVie} and \cite{He21} and then prove \Cref{bgx}.

\subsection*{Acknowledgements}
This is a part of the author's thesis. The author thanks to his advisor Sug Woo Shin for his introduction to the Deligne-Lusztig world. The author also feels gratitude to Ulrich Görtz and Xuhua He for their kind replies and encouragement, and to Sian Nie and Felix Schremmer for their careful readings and comments on the first draft. Lastly, Michael Rapoport's comments on the history of affine Deligne-Lusztig varieties which the author was not aware of, are appreciated.

This project started when the pandemic covered the world and many ideas came to the author during the stay in Korea Institute for Advanced Study (KIAS). We would like to thank the institution for the support and hospitality.

\section{Basics on single affine Deligne-Lusztig varieties}\label{chtwo}

Let $F$ be a nonarchimedean local field with a uniformizer $t$ and $G$ be a connected reductive group over $F$. Denote by $\brF\colonequals\widehat{F^{\nr}}$ the completion of the maximal unramified extension of $F$ and by $\brG$ the $\brF$-points of $G$. The Frobenius map on the residue field of $\brF$ lifts to that of $\brF$ which we denote by $\sigma$ again. The induced map on $\brG$ will be denoted the same. Finally, we denote the set of $\sigma$-conjugacy classes of $\brG$ by $B(G)$.

\subsection{Iwahori-Weyl group and $B(G)$}\label{IwahoriWeyl}
Let $S$ be a maximal $\brF$-split torus of $G$ defined over $F$ and $T$ be the centralizer of $S$. Note that $T$ is a maximal torus of $G$ as $G$ becomes quasi-split over $\brF$ by Steinberg's Theorem.
\subsubsection{Iwahori-Weyl group $\extw$} The Iwahori-Weyl group associated to $S$ is defined as\begin{equation}\label{extw}
    \extw\colonequals N_S(G)(\Breve{F})/T(\Breve{F})_1
\end{equation}
where $T(\Breve{F})_1$ is the unique parahoric subgroup of $T(\Breve{F})$. We can fit $\extw$ into the following short exact sequence of groups (\cite{HainesRapoport} Definition 7)
\[0\to X_*(T)_{\Gamma_0}\to \extw \to W_0\to 1\]where ${\Gamma_0}$ is the absolute Galois group of $\brF$ and $W_0$ is the relative Weyl group\begin{equation}\label{relweyl}
    W_0\colonequals N_S(G)(\brF)/T(\brF)
\end{equation}

Now, we fix a $\sigma$-invariant base alcove $\mathbf{a}$ in the apartment of $S$ and let $I$ be the Iwahori subgroup of $G$ corresponding to $\mathbf{a}$. By fixing a special vertex in the closure of $\mathbf{a}$, we get a section $W_0\to \extw$ (not necessarily $\sigma$-equivariant) which allows us to identify $\extw$ with $X_*(T)_{\Gamma_0}\rtimes W_0$.

The Newton map $\nu:\extw\to X_*(T)_{\Gamma_0,\bbQ}^\sigma$ is defined as follows. The action of $\sigma$ on $\extw$ is of finite order and so, given $x\in\extw$, there exists a positive integer $N$ such that $\mu_N := \prod_{i=0}^{N-1}\sigma^i(x)$ belongs to $X_*(T)_{\Gamma_0}^\sigma$. The Newton map sends $x$ to $\nu_x = \mu_N/ N$ for any such $N$ and it does not depend on the choice of $N$.

Let $G_\textrm{sc}$ be the simply connected cover of the derived subgroup of $G$ and $T_\text{sc}$ the inverse image of $T$ via $G_\text{sc}\to G_\text{der}\to G$. The Iwahori-Weyl group of $G_\text{sc}$ is the affine Weyl group $W_a$ and gives rise to the following short exact sequence (\cite{HainesRapoport} Lemma 14):
\begin{equation}\label{bruhatorder}
    1\to W_a\to \extw\overset{\Tilde{\kappa}_G}{\longrightarrow} X^*(Z(\hat{G})^{\Gamma_0})\to1.
\end{equation}
Denoting by $\Omega\subset \extw$ the stabilizer of the base alcove, we have an isomorphism $X^*(Z(\hat{G})^{\Gamma_0})\simeq \Omega$ which gives a section of $\Tilde{\kappa}_G$. This presents $\extw$ as $W_a\rtimes X^*(Z(\hat{G})^{\Gamma_0})$. Now, the Bruhat order on $W_a$ extends onto $\extw$ by making two elements be comparable when their projections to $X^*(Z(\hat{G})^{\Gamma_0})$ agree.

Note that $W_a$ is the affine Weyl group generated by orthogonal reflections with respect to the hyperplanes in $X_*(T_\text{sc})_{\Gamma_0}\otimes\mathbb{R}$. Hence, by \cite[Ch.VI, \S2.5. Proposition 8]{Bourbaki}, there exists a reduced root system $\Sigma$ whose affine Weyl group is canonically isomorphic to $W_a$. We denote by $Q^\vee$ the coroot lattice of $\Sigma$ and $P^\vee$ its coweight lattice. Lastly, let $\mathbb{S}$ be the set of simple reflections of the finite Weyl group of $\Sigma$ (that is, $W_0$) through the section $W_0 \to \extw$ defined above and $\widetilde{\mathbb{S}}$ the set of affine simple reflections containing $\mathbb{S}$.

The map $\sigma$ on $\brG$ induces an action on $\bbS$ which we will denote by $\sigma$ again. We call $J\subset \bbS$ a $\sigma$-stable subset if $\sigma(J)=J$. For any $\sigma$-stable subset $J$, we denote $X_*(T)_{\Gamma_0}\rtimes W_J$ by $\extw_J$ where $W_J$ is the subgroup of $W_0$ generated by the simple reflections of $J$.

A comment on the notation: we will use the notation $v\cdot\mu$ when considering the $W_0$-action on $X_*(T)_{\Gamma_0}$. So, for example, $vt^\mu\in\extw$ can also be written as $t^{v\cdot\mu}v$.

\subsubsection{$B(G)$ with Newton map and Kottwitz map}
Recall the Newton map and the Kottwitz map from \cite[4.5]{Kottwitz} that give an injective homomorphism $(\bar{\nu},\kappa_G):B(G)\to X_*(T)_\mathbb{Q}^{\Gamma,+} \times X^*(Z(\hat{G})^{\Gamma})$ where $\Gamma$ is the absolute Galois group of $F$.

\begin{defn}[$B(G,\mu)$]\label{bgmu}
Let $\mu\in X_*(T)^+$ be a dominant cocharacter of $G$. We define $B(G,\mu)$ as the subset of $B(G)$ consisting of $[b]\in B(G)$ such that\[\bar{\nu}_b\le \mu^\diamond\text{ and }\kappa_G([b])=\kappa_G([t^\mu])\]where $\mu^\diamond$ is the $\Gamma$-average\footnote{To be precise, this is true only when $G$ is quasi-split. In general, it is the average of the \textit{dominant representatives} of Galois orbits. For a more detailed explanation, see \cite[2.4]{HeNie}.} of $\mu$ and $t^\mu$ is the image of $t$ under $\mu:\mathbb{G}_m\to T$. Mazur's inequality mentioned in \Cref{mazurineq} refers to $\bar{\nu}_b\le\mu^\diamond$.
\end{defn}

We note that the map $\Tilde{\kappa}_G$ in \Cref{bruhatorder} followed by the projection $X^*(Z(\hat{G})^{\Gamma_0})\to X^*(Z(\hat{G})^\Gamma)$ gives $\kappa_G:\extw\to X^*(Z(\hat{G})^\Gamma)$ and it is compatible with $\kappa_G$ on $B(G)$ via the lifting from $\extw$ to $N_S(G)(\brF)\subset\brG$.

\subsection{Single affine Deligne-Lusztig variety}\label{sadlv}
The Iwahori-Bruhat decomposition says
\[\brG=\bigsqcup_{x\in \extw} I\dot{x}I\]where $\dot{x}\in N_S(G)(\brF)$ is a representative of $x\in \extw$.
\begin{defn}[affine Deligne-Lusztig ``variety'']\label{adlvdef}
For $x\in \extw$ and $b\in B(G)$, the single affine Deligne-Lusztig variety associated to $x$ and $b$ is\[X_x(b)\colonequals\{gI\in\brG/I: g^{-1}b\sigma(g)\in I \dot{x} I \}.\]
\end{defn}
A priori, it is an affine Deligne-Lusztig \textit{set} and we do not have a natural scheme structure on it. In fact, in the case of an equal characteristic local field $F$, it is not difficult to identify $X_x(b)$ as the $\overline{\mathbb{F}}_q$-points of a (locally of finite type) locally closed subscheme in the affine flag variety over $\overline{\mathbb{F}}_q$. It is the mixed characteristic case where we need distinguished works of \cite{Zhu} and \cite{BS} to give a perfect scheme structure on $X_x(b)$.

In order to study the nonemptiness pattern, it is more convenient to do some reductions. By \cite[Corollary 4.4 and Section 4.3]{HeZhou}, we can reduce the nonemptiness problem to the case when $G$ is the quasi-split inner form of an adjoint group. Finally, if $G=G_1\times G_2$ then $B(G)=B(G_1)\times B(G_2)$ and $\extw_G=\extw_{G_1}\times\extw_{G_2}$. Hence, $X_x(b)= X_{x_1}(b_1)\times X_{x_2}(b_2)$ where $x_i$'s are the projections of $x$ onto $\extw_{G_i}$ and $b_i$'s are that of $b$ onto $B(G_i)$. Now, we may assume that $G$ is a simple quasi-split reductive group of adjoint type.

\begin{rem}
In \textit{loc.cit.}, it is assumed that $G$ is tamely ramified over $F$ and $p\nmid\!\pi_1(G_\text{ad})$ in the equal characteristic case. They refer to \cite{GHN} Proposition 2.2.1 which assumes, however, only that $p$ does not divide $\pi_1(G_\textrm{ad})$. Hence, we do not need to assume the tameness condition.

We can remove the $p\nmid\! \pi_1(G_\text{ad})$ condition as well because we do not need the isomorphism in Proposition 2.2.1 of \textit{loc.cit.} to study the nonemptiness. Note that they use \cite{PappasRapoport} 6.a.1 to show that the corresponding connected components of affine flag varieties of $G$ and $G_\text{ad}$ are isomorphic through the affine flag variety of $G_\text{sc}$ when $p\nmid\! \pi_1(G_\text{ad})$. However, a similar argument to Remark 6.4 of \textit{loc.cit.} actually shows that they still admit universal homeomorphisms from the affine flag variety of $G_\text{sc}$ without the assumption. This is enough for us to reduce to the case when $G$ is of adjoint type.
\end{rem}

\subsection{Terminologies on positions of alcoves}

From now on, we assume that $G$ is a simple quasi-split reductive group of adjoint type. For simplicity, we abusively use $x$ to denote the alcove $x\mathbf{a}$. For example, $\id_{\extw}$ denotes the base alcove.

Let $\Phi$ be the set $\Phi(G,S)$ of relative roots and $\Phi^+$ (resp. $\Phi^-$) be the subset of positive (resp. negative) roots. Let $V\colonequals X_*(T)_{\Gamma_0}\otimes\mathbb{R}$ which is isomorphic to $X_*(T_\text{sc})_{\Gamma_0}\otimes\mathbb{R}$ as $G$ is semisimple. For $\alpha\in \Phi$, the hyperplane $H_\alpha$ in $V$ is defined by $\{\mathbf{v}\in V:\langle \alpha,\mathbf{v}\rangle =0\}$ and, more generally, for any integer $k$, we define $H_\alpha(k)\colonequals\{\mathbf{v}\in V:\langle \alpha,\mathbf{v}\rangle=k\}$. Note that $H_\alpha(k)=H_{-\alpha}(-k)$.

\begin{defn}[{\cite[Section 2.1]{GHKR}}, $k$-value of an alcove with respect to a root]
For $\alpha\in \Phi$ and $x\in \extw$, let us denote the integer $k$ such that $x$ is located in between the hyperplanes $H_\alpha(k)$ and $H_\alpha(k+1)$ by $k(\alpha,x)$.
\end{defn}

\begin{defn}[The critical strips and shrunken Weyl chambers]\label{shrunken}
For each positive root $\alpha$, we call the set of alcoves between $H_\alpha(0)$ and $H_\alpha(1)$ the critical strip associated to $\alpha$ and denote by $CS_\alpha$.\footnote{One can define the critical strip of a negative root $\alpha$ to be the set of alcoves between $H_\alpha(0)$ and $H_\alpha(-1)$ and also denote by $CS_\alpha$. This convention will be used.} The set of alcoves which do not lie in any critical strip is called the shrunken Weyl chambers.
\end{defn}
The following computes the $k$-values explicitly. Here, the expression $t^\mu w$ is an alcove in the dominant Weyl chamber always and $v$ is an element of $W_0$.
\begin{lem}\label{klem} For any root $\alpha\in \Phi$,
\[k(\alpha,t^\mu w)=\left\{\begin{array}{ll}
    \langle \alpha,\mu \rangle & \text{if }w^{-1}\alpha>0, \\
    \langle \alpha,\mu \rangle-1 & \text{otherwise.}
\end{array}\right.\]
\end{lem}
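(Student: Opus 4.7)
The plan is to unwind the definition of $k(a,t^\mu w)$ by explicitly computing where points of the alcove $t^\mu w\cdot \mathbf{a}$ sit relative to the hyperplanes $H_a(k)$. The key preliminary fact is a clean description of the base alcove: I would first verify that
\[
\mathbf{a}=\{\mathbf{v}\in V:0<\langle\alpha,\mathbf{v}\rangle<1\text{ for every }\alpha\in\Phi^+\}.
\]
The inclusion of the usual description $\{\mathbf{v}:\langle\alpha_i,\mathbf{v}\rangle>0\text{ for simple }\alpha_i,\ \langle\theta,\mathbf{v}\rangle<1\}$ into this set uses the standard fact that every positive root $\alpha$ satisfies $0\le\alpha\le\theta$ in the partial order (so $\theta-\alpha$ and $\alpha$ are non-negative integer combinations of simple roots, giving $0<\langle\alpha,\mathbf{v}\rangle<\langle\theta,\mathbf{v}\rangle<1$). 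The reverse inclusion is trivial.

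Next I would use the standard action of $\extw$ on $V$ via affine transformations, under which $t^\mu w$ sends $\mathbf{v}\mapsto\mu+w(\mathbf{v})$. For any $\mathbf{v}\in\mathbf{a}$ and any root $a\in\Phi$, I then compute
\[
\langle a,t^\mu w\cdot\mathbf{v}\rangle=\langle a,\mu\rangle+\langle a,w(\mathbf{v})\rangle=\langle a,\mu\rangle+\langle w^{-1}(a),\mathbf{v}\rangle.
\]
The whole point is that $\langle a,\mu\rangle$ is an integer (since $\mu\in X_*(T)_{\Gamma_0}$ and $a\in\Phi$), so the integer $k$ with $k<\langle a,t^\mu w\cdot\mathbf{v}\rangle<k+1$ can be read off from the sign of the second summand.

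The proof then splits into two cases depending on the sign of $w^{-1}(a)$. If $w^{-1}(a)>0$, then by the description of $\mathbf{a}$ we have $\langle w^{-1}(a),\mathbf{v}\rangle\in(0,1)$, so $\langle a,t^\mu w\cdot\mathbf{v}\rangle\in\bigl(\langle a,\mu\rangle,\langle a,\mu\rangle+1\bigr)$, which gives $k(a,t^\mu w)=\langle a,\mu\rangle$. If instead $w^{-1}(a)<0$, write $w^{-1}(a)=-\beta$ with $\beta\in\Phi^+$; then $\langle w^{-1}(a),\mathbf{v}\rangle=-\langle\beta,\mathbf{v}\rangle\in(-1,0)$, so $\langle a,t^\mu w\cdot\mathbf{v}\rangle\in\bigl(\langle a,\mu\rangle-1,\langle a,\mu\rangle\bigr)$, which gives $k(a,t^\mu w)=\langle a,\mu\rangle-1$. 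Note the argument is uniform in the sign of $a$ itself; only $w^{-1}(a)$ matters, which is exactly what the statement asserts.

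There is essentially no hard step here: the lemma is a bookkeeping computation once the base alcove is characterized by the inequalities $0<\langle\alpha,\cdot\rangle<1$ over \emph{all} positive roots. The only mild subtlety to double-check is the convention for how $t^\mu w$ acts on $V$ (namely $\mathbf{v}\mapsto\mu+w(\mathbf{v})$ as opposed to $\mathbf{v}\mapsto w(\mathbf{v}+\mu)$), but this is already fixed by the identification $\extw\simeq X_*(T)_{\Gamma_0}\rtimes W_0$ chosen in \cref{IwahoriWeyl} together with the hypothesis that $t^\mu w$ already lies in the dominant chamber.
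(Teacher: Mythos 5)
Your proof is correct and is essentially the same argument the paper gives, just written out in full: the paper's one-line proof likewise comes down to the translation $t^\mu$ shifting the $k$-value by the integer $\langle a,\mu\rangle$ and the sign of $w^{-1}a$ deciding on which side of $H_a(0)$ the finite alcove $w\mathbf{a}$ sits. Your explicit characterization of the base alcove by $0<\langle\alpha,\cdot\rangle<1$ for all $\alpha\in\Phi^+$ is consistent with the paper's conventions (it is exactly what $k(a,\id)=\delta_a$ encodes), so the bookkeeping goes through without a gap.
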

\begin{proof}
We only need to consider the case where the alcove is represented by a finite Weyl group element and, in this case, the value $\delta_{w^{-1}\alpha}$ decides whether $w$ is in the $\alpha$-direction or $-\alpha$-direction.\end{proof}
In general, $k(\alpha,vt^\mu w)=\langle \alpha, v\mu\rangle+ \delta_{w^{-1}v^{-1}\alpha}$ where $\delta_\beta=0$ if $\beta\in \Phi^+$ and $\delta_\beta=-1$ otherwise.

\begin{cor}\label{kprop}Let $\alpha$ be a root in (1) and, $\alpha$, $\beta$, and $\alpha+\beta$ be roots in (2).
\begin{enumerate}
    \item $k(\alpha,t^\mu w)+k(-\alpha,t^\mu w)=-1$.
    \item $k(\alpha+\beta,t^\mu w)=k(\alpha,t^\mu w)+k(\beta,t^\mu w)$ or $k(\alpha+\beta,t^\mu w) = k(\alpha,t^\mu w)+k(\beta,t^\mu w)+1$.
\end{enumerate}
\end{cor}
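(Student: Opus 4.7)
\textbf{Proof plan for Corollary \ref{kprop}.} The plan is to reduce both parts to the explicit formula from Lemma \ref{klem} (more precisely, the general form $k(a,t^\mu w)=\langle a,\mu\rangle+\delta_{w^{-1}a}$ stated right after its proof) and do a short case analysis on signs.

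For part (1), I would simply substitute: $k(a,t^\mu w)+k(-a,t^\mu w)=\langle a,\mu\rangle+\delta_{w^{-1}a}-\langle a,\mu\rangle+\delta_{-w^{-1}a}=\delta_{w^{-1}a}+\delta_{-w^{-1}a}$. Since $w^{-1}a$ and $-w^{-1}a$ are nonzero roots of opposite signs, exactly one of them lies in $\Phi^+$ and the other in $\Phi^-$, so one $\delta$-term is $0$ and the other is $-1$. The sum is $-1$, as claimed.

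For part (2), set $a'=w^{-1}a$ and $b'=w^{-1}b$, both roots (and $a'+b'$ a root by hypothesis). Plugging into the formula, the desired comparison reduces to showing
\[
\delta_{a'+b'}-\delta_{a'}-\delta_{b'}\in\{0,1\}.
\]
I would split into cases according to the signs of $a'$ and $b'$: if both are positive then $a'+b'>0$ and the expression is $0$; if both are negative then $a'+b'<0$ and the expression is $-1-(-2)=1$; if they have opposite signs, then $a'+b'$ may be positive or negative, giving either $0-(-1)=1$ or $-1-(-1)=0$. In every case the value lies in $\{0,1\}$, which is exactly the dichotomy claimed.

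There is no real obstacle here; the only mild subtlety is making sure that the three roots $a,b,a+b$ are simultaneously pulled back through $w^{-1}$ so that the sign-bookkeeping is internally consistent, and that the $\delta$-convention is applied uniformly. Once those are in place, both statements drop out of Lemma \ref{klem} by direct substitution.
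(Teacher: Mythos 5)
Your proof is correct and follows essentially the same route as the paper: substitute the formula of Lemma \ref{klem} (equivalently $k(a,t^\mu w)=\langle a,\mu\rangle+\delta_{w^{-1}a}$) and track signs, using in part (2) exactly the observation the paper makes, namely that two positive (resp. negative) roots cannot sum to a negative (resp. positive) root. The only difference is that you spell out the mixed-sign case explicitly, which the paper leaves implicit.
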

\begin{proof}
(1): $w^{-1}\alpha>0$ implies $w^{-1}(-\alpha)<0$ and vice versa. (2): Apply \Cref{klem} noting that the cases where both $w^{-1}\alpha$ and $w^{-1}\beta$ are positive (resp. negative) but $w^{-1}(\alpha+\beta)$ is negative (resp. positive) are not possible.
\end{proof}
From \Cref{klem}, $vt^\mu w\in CS_{v\alpha}$ for some $\alpha\in \Phi^+$ if and only if $\langle \alpha,\mu\rangle+\delta_{w^{-1}\alpha}-\delta_{v\alpha}=0$. Noting that $k(\alpha,t^\mu w)\ge0$ when $\alpha\in\Phi^+$, we have the following corollary easily:
\begin{cor}
Let $\alpha\in\Phi^+$ and suppose $vt^\mu w\in CS_{v\alpha}$. Then $v\alpha\in\Phi^+$ and one of the following holds:
\begin{enumerate}
    \item $\langle\alpha,\mu\rangle=0$ and $w^{-1}\alpha\in\Phi^+$,
    \item $\langle\alpha,\mu\rangle=1$ and $w^{-1}\alpha\in\Phi^-$.
\end{enumerate}
When $vt^\mu w\not\in CS_{v\alpha}$, if $\langle\alpha,\mu\rangle=0$ then $v\alpha\in\Phi^-$ and $w^{-1}\alpha\in\Phi^+$.
\end{cor}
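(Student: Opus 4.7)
The overall approach is to unpack the hypothesis $vt^\mu w \in C_{v\alpha}$ via the explicit $k$-value formula $k(a, vt^\mu w) = \langle a, v\mu\rangle + \delta_{w^{-1}v^{-1}a}$ stated just before the corollary, and then read off the sign information by comparing with the dominance of $t^\mu w$. The only non-formal ingredient I will need is the observation, already recorded in the paragraph preceding the corollary, that $k(\alpha, t^\mu w) \ge 0$ for every $\alpha \in \Phi^+$; this is just the statement that $t^\mu w$ lies in the dominant Weyl chamber, which is the running convention in this subsection.

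For the first assertion, I would specialize the formula to $a = v\alpha$: since the $W_0$-action is orthogonal with respect to the pairing, one gets
\[k(v\alpha, vt^\mu w) = \langle \alpha, \mu\rangle + \delta_{w^{-1}\alpha} = k(\alpha, t^\mu w).\]
The hypothesis $vt^\mu w \in C_{v\alpha}$ says that this common value equals $\delta_{v\alpha}$: by \cref{shrunken} together with its footnote, the strip $C_\beta$ is the $k = 0$ strip when $\beta \in \Phi^+$ and the $k = -1$ strip when $\beta \in \Phi^-$. Because $\alpha \in \Phi^+$ and $t^\mu w$ is dominant, the right-hand side $k(\alpha, t^\mu w)$ is $\ge 0$, so $\delta_{v\alpha} \ge 0$, forcing $\delta_{v\alpha} = 0$ and hence $v\alpha \in \Phi^+$. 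The residual equation $\langle \alpha, \mu\rangle + \delta_{w^{-1}\alpha} = 0$ then splits according to the sign of $w^{-1}\alpha$: $w^{-1}\alpha \in \Phi^+$ gives $\delta_{w^{-1}\alpha} = 0$ and $\langle \alpha, \mu\rangle = 0$, which is (1); $w^{-1}\alpha \in \Phi^-$ gives $\delta_{w^{-1}\alpha} = -1$ and $\langle \alpha, \mu\rangle = 1$, which is (2).

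For the second assertion, I would run the same identity in reverse. Assuming $\langle \alpha, \mu\rangle = 0$, it reduces to $k(v\alpha, vt^\mu w) = \delta_{w^{-1}\alpha} \in \{0, -1\}$; but dominance of $t^\mu w$ again forces $k(\alpha, t^\mu w) \ge 0$, so $\delta_{w^{-1}\alpha} = 0$, i.e.\ $w^{-1}\alpha \in \Phi^+$, and consequently $k(v\alpha, vt^\mu w) = 0$. If $v\alpha$ were positive this would place the alcove inside $C_{v\alpha}$, contradicting the standing hypothesis $vt^\mu w \notin C_{v\alpha}$; hence $v\alpha \in \Phi^-$, as claimed. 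I do not foresee any genuine obstacle: the statement is a bookkeeping corollary of \cref{klem} and the dominance convention. The only subtlety to keep straight is the sign asymmetry introduced by the footnote in \cref{shrunken}, which places $C_{v\alpha}$ on the negative side of $H_{v\alpha}(0)$ when $v\alpha$ is negative — this is precisely what makes cases (1) and (2) look different and is what the argument uses when it eliminates $v\alpha \in \Phi^-$ in the first part.
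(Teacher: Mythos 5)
Your proposal is correct and follows essentially the same route the paper intends: the paper's own justification is precisely the equivalence $vt^\mu w\in C_{v\alpha}\iff \langle\alpha,\mu\rangle+\delta_{w^{-1}\alpha}=\delta_{v\alpha}$ from \cref{klem} together with the observation $k(\alpha,t^\mu w)\ge 0$ for $\alpha\in\Phi^+$, and your write-up just spells out that case analysis, including the footnote convention for $C_\beta$ with $\beta$ negative. No gaps.
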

Now, we recall the definition of $(J,w)_\sigma$-alcove\footnote{In \cite{GHN}, this is denoted by $(J,w,\delta)$-alcove. We choose to use the notation `$(J,w)_\sigma$-alcove' in this paper not just for the sake of simplicity. In fact, the induced action $\delta$ on $\bbS$ by $\sigma$ is a fixed map. It is not varying, unlike $J$ or $w$ in the notation.} from \cite[3.3]{GHN}.
\begin{defn}[$(J,w)_\sigma$-alcove]\label{jwsalcove}Let $J$ be a $\sigma$-stable subset of $\bbS$ and $w$ be an element of $W_0$. We say that $x\in\extw$ is a $(J,w)_\sigma$-alcove if
\begin{itemize}
    \item[(1)] $w^{-1}x\sigma(w)\in \extw_J$ and,
    \item[(2)] for any $\alpha\in w(\Phi^+\setminus\Phi_J^+)$, $k(\alpha,x)\ge k(\alpha,\id)$.
\end{itemize}
\end{defn}
Next, the ``essential'' finite part $\etasigma(x)$ of $x$ observed by Reuman is defined as follows.
\begin{defn}[$\eta_\sigma(x)$, {\cite[3.6]{GHN}}]\label{etasigma}For $x\in \extw$, let $v_x\in W_0$ be the unique element such that $v_x^{-1}x$ is in the dominant Weyl chamber and $v_x^{-1}x=t^{\mu_x}w_x$ for $\mu_x\in X_*(T)_{\Gamma_0}^+$ and $w_x\in W_0$. We define \[\eta_\sigma(x)=\sigma^{-1}(w_x)v_x.\]
\end{defn}

Finally, we recall the following from \cite[Ch.IV, \S1.8. Proposition 7]{Bourbaki}.

\begin{defn}[support and $\sigma$-support]\label{suppdefn}Given a Coxeter system $(W,S)$ and $w\in W$, the support of $w$ is defined to be the set of $s\in S$ appearing in some (equivalently each) reduced expression of $w$ and denoted by $\supp(w)$. When $(W,S)$ is equipped with an action by $\xi$, the minimal $\xi$-stable set containing $\supp(w)$ is called the $\xi$-support of $w$ and denoted by $\supp_\xi(w)$.
\end{defn}

\begin{rem}
There are two Coxeter systems $(W_0,\bbS)$ and $(W_a,\widetilde{\bbS})$ in this paper. We have $\xi=\sigma$ for $(W_0,\bbS)$ and $\xi=\omega\sigma$ for $(W_a,\widetilde{\bbS})$ where $\omega$ is an element of $\Omega$ defined in \Cref{bruhatorder}. For $x\in W_a\rtimes \Omega$ whose projection to $\Omega$ is $\omega_x$, we use the notation $\widetilde{\sigmasupp}(x)$ instead of $\supp_{\omega_x\sigma}(x)$ for simplicity.
\end{rem}

\subsection{On the known result \Cref{ghnb}}\label{knownresult}

Now we recall:

\begin{thm}[\Cref{ghnb}]\label{GHNthm}Let $b\in B(G)$ be basic and $x\in \extw$ lie in the shrunken Weyl chambers with $\kappa(b)=\kappa(x)$. Then,\begin{center}$X_x(b)\neq\emptyset$ if and only if the $\sigma$-support of $\eta_\sigma(x)$ is $\mathbb{S}$.\end{center}
\end{thm}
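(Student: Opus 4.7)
The plan is to handle the two directions separately, using the abstract criterion of Theorem GHN A for the obstruction direction $(\Rightarrow)$ and He's length-reduction machinery for the construction direction $(\Leftarrow)$. Throughout I write $v_x^{-1}x = t^{\mu_x}w_x$ as in Definition \ref{etasigma}, so that $\eta_\sigma(x) = \sigma^{-1}(w_x)v_x$.

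For $(\Rightarrow)$, I would contrapose: assume $\sigmasupp(\etasigma(x)) = J \subsetneq \mathbb{S}$ and show $X_x(b) = \emptyset$. The first step is to exhibit $x$ as a $(J,v_x)_\sigma$-alcove in the sense of Definition \ref{jwsalcove}. Condition (1) follows from a direct computation, $v_x^{-1} x \sigma(v_x) = t^{\mu_x} w_x \sigma(v_x) = t^{\mu_x} \sigma(\etasigma(x))$, which lies in $\extw_J$ because $J$ is $\sigma$-stable and contains $\supp(\etasigma(x))$. For condition (2), the inequality $k(a,x) \geq k(a,\id_{\extw})$ for $a \in v_x(\Phi^+ \setminus \Phi_J^+)$ reduces via Lemma \ref{klem} to $\langle \alpha, \mu_x \rangle + \delta_{w_x^{-1}\alpha} \geq \delta_\alpha$ for the corresponding $\alpha \in \Phi^+ \setminus \Phi_J^+$; this is where the shrunken Weyl chambers hypothesis is crucial, as it forces $\langle \alpha, \mu_x \rangle \geq 1$ whenever $w_x^{-1}\alpha \in \Phi^-$. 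Applying Theorem GHN A then requires $\kappa_{M_J}(v_x^{-1}x\sigma(v_x)) \in \kappa_{M_J}([b] \cap M_J(\brF))$. Since $b$ is basic with central Newton point in $G$, every element of $[b] \cap M_J(\brF)$ has the same central Newton point, pinning down $\kappa_{M_J}([b] \cap M_J(\brF))$ to the single coset determined by $\kappa_G(b)$; a direct coweight-lattice calculation using the translation part $\mu_x$ shows that $\kappa_{M_J}(t^{\mu_x}\sigma(\etasigma(x)))$ lies in a different coset when $\sigmasupp(\etasigma(x))$ is proper, yielding the contradiction.

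For $(\Leftarrow)$, assuming $\sigmasupp(\etasigma(x)) = \mathbb{S}$ and $\kappa_G(x) = \kappa_G(b)$, I would construct a chain of length-reducing $\sigma$-conjugations from $x$ to a minimal length element $x_{\min}$ in its $\sigma$-conjugacy class in $\extw$. At each step, pick a simple reflection $s$ with $\ell(sx\sigma(s)) \leq \ell(x)$ and replace $x$ by $sx\sigma(s)$ or $sx$; such an $s$ exists whenever $x$ is not $\sigma$-straight, and the full-support hypothesis supplies enough simple reflections across the $\sigma$-orbits of $\mathbb{S}$ to keep the reduction going without prematurely trapping $\etasigma$ inside a proper parabolic. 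Once $x_{\min}$ is reached, its lift $\dot{x}_{\min}$ represents a basic class in $B(G)$ with Kottwitz invariant equal to $\kappa_G(b)$, forcing $[\dot{x}_{\min}] = [b]$ by basicness, so that $X_{x_{\min}}(b) \neq \emptyset$ trivially. He's reduction principle (\cite[Proposition 4.2]{He14}) then propagates nonemptiness back up the chain to $X_x(b) \neq \emptyset$.

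The main obstacle is the $(\Leftarrow)$ direction, specifically guaranteeing that the length-reducing chain terminates at a $\sigma$-straight element whose $B(G)$-class is precisely $[b]$, rather than some other basic class in the same $\kappa_G$-fiber, and that the full-support hypothesis is genuinely used to prevent the reduction from collapsing $\etasigma$ into a proper $W_J$. Executing this cleanly requires a careful induction on $\ell(x)$ together with an analysis of how $\etasigma$ transforms under the moves $x \mapsto sx\sigma(s)$ and $x \mapsto sx$ while $x$ stays in the shrunken region, which constitutes the technical core of the proof.
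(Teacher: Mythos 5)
Your $(\Rightarrow)$ direction starts on the right track but misidentifies the actual obstruction, and your $(\Leftarrow)$ direction has a genuine gap. Let me take them in turn.

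For $(\Rightarrow)$, exhibiting $x$ as a $(J,v_x)_\sigma$-alcove for $J = \sigmasupp(\etasigma(x))$ is exactly what happens (cf.\ \Cref{jrxalcove} with $r = \id$), and the shrunken hypothesis does ensure condition (2) of \Cref{jwsalcove} — although note a small slip: you wrote the inequality as $\langle\alpha,\mu_x\rangle + \delta_{w_x^{-1}\alpha} \geq \delta_\alpha$, whereas the correct right-hand side is $k(v_x\alpha,\id) = \delta_{v_x\alpha}$; this matters since the binding case is $w_x^{-1}\alpha \in \Phi^-$ and $v_x\alpha \in \Phi^+$. More seriously, the obstruction you invoke is not the one that works. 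You claim that basicness pins $\kappa_{M_J}([b]\cap M_J(\brF))$ down to a single coset and that a coweight-lattice computation on $\kappa_{M_J}(t^{\mu_x}\sigma(\etasigma(x)))$ produces a contradiction; but $\kappa_{M_J}([b]\cap M_J(\brF))$ is \emph{not} a singleton in general — describing this set is precisely what makes Theorem GHN A hard to apply directly, as the paper emphasizes when comparing GHN A and GHN B. The argument that actually closes this direction (\cite[Propositions 3.6.4, 3.6.5]{GHN}, reproduced here as \Cref{nonemptyS}) works with the Newton point rather than the Kottwitz map: if $x$ is a $(J,v_x)_\sigma$-alcove and $X_x(b)\neq\emptyset$, then $\nu_{\mu_x} - \bar\nu_b \in Q^\vee_{J,\bbQ}$; since $b$ basic gives $\bar\nu_b = 0$, and since $\mu_x$ is dominant and non-central in the shrunken case, \Cref{wronglem} forces $J = \bbS$, contradiction. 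You should replace the $\kappa_{M_J}$ claim with this Newton-point argument.

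For $(\Leftarrow)$, your Deligne--Lusztig reduction strategy diverges from the paper's route and has a gap you partly acknowledge but do not resolve, and it is a gap one cannot hand-wave past. Length-reduction steps of the form $x \mapsto sx$ change the $\sigma$-conjugacy class of $x$ in $\extw$, so the terminal minimal-length element $x_{\min}$ is not canonical, and there is no reason its lift $\dot x_{\min}$ should land in $[b]$ rather than in some other class with the same Kottwitz invariant; in fact for a minimal-length element $X_{x_{\min}}(b) \neq \emptyset$ \emph{iff} $[\dot x_{\min}] = [b]$, so you would be assuming exactly what needs to be proved. Moreover, you have no mechanism tying the full-support hypothesis $\sigmasupp(\etasigma(x))=\bbS$ to the behavior of the chain. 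The paper's argument sidesteps both difficulties at once: by \Cref{suppinJ}, the shrunken-plus-full-support hypotheses force $J = \bbS$ to be the \emph{only} $J$ for which $x$ is a $(J,w)_\sigma$-alcove, and for $J = \bbS$ the NLO condition of \Cref{NLOnonempty} is satisfied trivially by taking $b_J$ to be the $\sigma$-straight element of $\extw$ corresponding to $[b]$. That is where the full-support hypothesis earns its keep, and it does so without any chain bookkeeping. If you want to pursue the length-reduction route, you would be re-proving the internals of \cite[Theorem 4.4.7]{GHN} rather than using it, and the induction you would need is exactly the ``technical core'' you leave unexecuted.
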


\begin{proof}[Sketch of proof]
The proof  is a combination of the following lemmas.
\begin{lem}[{\cite[Proposition 3.6.4 and Proposition 3.6.5]{GHN}}]\label{nonemptyS}
    Let $b$ be basic. Under the following two assumptions, we have $X_x(b)=\emptyset$.
    \begin{enumerate}
        \item $\nu_{\mu_x}\neq\nu_{b}$ and
        \item $\sigmasupp(\eta_\sigma(x))\neq\mathbb{S}$,
    \end{enumerate}
    If we assume that $x$ lies in shrunken Weyl chambers, (2) implies (1).
\end{lem}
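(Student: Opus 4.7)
The lemma has two parts: (A) the emptiness statement from (1) and (2), and (B) the implication ``(2) $\Rightarrow$ (1)'' under shrunken. I would treat them separately, starting with (B), which gives the easier reduction.

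For (B), I would observe that the shrunken hypothesis alone forces $\mu_x$ to be strictly dominant. Writing $v_x^{-1}x=t^{\mu_x}w_x$ with $\mu_x\in X_*(T)^+_{\Gamma_0}$, \cref{klem} gives
\[k(\alpha,t^{\mu_x}w_x)=\langle\alpha,\mu_x\rangle\quad\text{or}\quad\langle\alpha,\mu_x\rangle-1\]
according as $w_x^{-1}\alpha>0$ or $<0$. The dominance of $v_x^{-1}x$ forces $\langle\alpha,\mu_x\rangle\geq 1$ whenever $w_x^{-1}\alpha<0$, while the shrunken condition $k(\alpha,t^{\mu_x}w_x)\neq 0$ forces $\langle\alpha,\mu_x\rangle\geq 1$ whenever $w_x^{-1}\alpha>0$. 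Combining the two cases, $\mu_x$ is strictly dominant for every $\alpha\in\Phi^+$, hence its $\sigma$-Galois average $\nu_{\mu_x}$ is also strictly dominant. Since $G$ is adjoint and $b$ is basic, $\nu_b=0$, so $\nu_{\mu_x}\neq\nu_b$; note that this argument uses only the shrunken hypothesis, so in particular it gives (2) $\Rightarrow$ (1) trivially under shrunken.

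For (A), I set $J\colonequals\sigmasupp(\eta_\sigma(x))$, which is a proper $\sigma$-stable subset of $\bbS$ by (2). The plan is to apply \thma\ to the pair $(J,v_x)$, so I first verify that $x$ is a $(J,v_x)_\sigma$-alcove. Condition (1) of \cref{jwsalcove} follows from
\[v_x^{-1}x\sigma(v_x)=t^{\mu_x}w_x\sigma(v_x)=t^{\mu_x}\sigma\bigl(\sigma^{-1}(w_x)v_x\bigr)=t^{\mu_x}\sigma(\eta_\sigma(x))\in\extw_J,\]
using $\sigma$-stability of $J$. For condition (2), with $a=v_x b$ and $b\in\Phi^+\setminus\Phi_J^+$, the formula $k(v_xb,x)=\langle b,\mu_x\rangle+\delta_{w_x^{-1}b}$ is compared with $k(v_xb,\id)=\delta_{v_xb}$ via a direct four-case analysis on the signs of $v_xb$ and $w_x^{-1}b$; the only nontrivial case is $v_xb>0$ and $w_x^{-1}b<0$, where dominance of $v_x^{-1}x$ alone (applied to the positive root $b$) forces $\langle b,\mu_x\rangle\geq 1$.

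Having established the alcove structure, \thma\ reduces (A) to showing
\[\kappa_{M_J}(t^{\mu_x})=\kappa_{M_J}\bigl(v_x^{-1}x\sigma(v_x)\bigr)\notin\kappa_{M_J}\bigl([b]\cap M_J(\brF)\bigr),\]
where the first equality uses that $\sigma(\eta_\sigma(x))\in W_J\subset W_a^{M_J}$ has trivial $\kappa_{M_J}$. Any $b'\in[b]\cap M_J(\brF)$ has $G$-Newton equal to the central $\bar\nu_b$, so its $M_J$-Newton lifts this central value; meanwhile $\nu_{\mu_x}\neq\nu_b$ by (1) distinguishes the $M_J$-Newton of $t^{\mu_x}$ from any such lift. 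The main technical obstacle is precisely this last step: translating a $G$-level Newton inequality into an $M_J$-level Kottwitz inequality, since the Newton and Kottwitz invariants together (not either separately) classify $B(M_J)$. I expect to route through the standard compatibility of Newton and Kottwitz under $M_J\hookrightarrow G$, using centrality of $\bar\nu_b$ in $G$ and the injectivity of $(\bar\nu,\kappa_{M_J})$ on $B(M_J)$; one may also assume $\kappa_G(x)=\kappa_G(b)$ without loss of generality, since otherwise $X_x(b)=\emptyset$ by the obvious obstruction.
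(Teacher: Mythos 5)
Your part (B) rests on a false claim. The shrunken condition constrains the $k$-values of the alcove $x$ itself, not of its dominant representative $v_x^{-1}x=t^{\mu_x}w_x$: writing a positive root as $\pm v_x\alpha$ with $\alpha\in\Phi^+$, the condition ``$x\notin C_{v_x\alpha}$'' reads $\langle\alpha,\mu_x\rangle+\delta_{w_x^{-1}\alpha}\neq\delta_{v_x\alpha}$, which is an actual constraint (namely $\langle\alpha,\mu_x\rangle\geq 1$, resp.\ $\geq 2$) only for those $\alpha\in\Phi^+$ with $v_x\alpha\in\Phi^+$, and is automatic when $v_x\alpha\in\Phi^-$ because the dominant alcove has $k(\alpha,t^{\mu_x}w_x)\geq 0>-1$. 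So ``shrunken $\Rightarrow\mu_x$ strictly dominant'' is false: $x=w_0\in W_0\subset\extw$ lies in the shrunken Weyl chambers, yet $\mu_x=0$, and with $b=1$ one has $\nu_{\mu_x}=0=\nu_b$; hence shrunken alone does not imply (1), and your argument, which never uses hypothesis (2), cannot be repaired as stated. The correct route is the contrapositive: if $\nu_{\mu_x}=\nu_b$ is central then $\mu_x$ is central and $w_x=\id$, so $\etasigma(x)=v_x$, and the shrunken condition then forces $\Phi^+\cap v_x\Phi^+=\emptyset$, i.e.\ $v_x=w_0$, whose $\sigma$-support is all of $\bbS$, contradicting (2); this is exactly the argument run (in its one-strip variant) in \cref{nonemptyetaS}.

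For part (A), your verification that $x$ is a $\bigl(\sigmasupp(\etasigma(x)),v_x\bigr)_\sigma$-alcove is correct and is precisely the $r=\id$ case of \cref{jrxalcove}. But the step you yourself flag as the ``main technical obstacle'' is the entire content of the statement and is not carried out: from $\kappa_{M_J}(t^{\mu_x})=\kappa_{M_J}(b')$ for some $b'\in[b]\cap M_J(\brF)$ one must deduce $\nu_{\mu_x}-\bar\nu_b\in Q^\vee_{J,\bbQ}$, using that $\kappa_{M_J}$, rationally, computes the image of the Newton point in $\pi_1(M_J)_{\Gamma}\otimes\bbQ$, whose kernel on $X_*(T)^\sigma_{\Gamma_0,\bbQ}$ is spanned by the coroots in $J$; one then needs \cref{wronglem} (hence \cref{oldpos} and the $\sigma$-connectedness of the diagram of the simple group $G$, with $\nu_{\mu_x}$ dominant and $\bar\nu_b=0$) to conclude $\nu_{\mu_x}=\nu_b$ and contradict (1). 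The tool you name instead, injectivity of $(\bar\nu,\kappa_{M_J})$ on $B(M_J)$, does not do this: equality of $\kappa_{M_J}$ alone never forces equality of Newton points, so without the positivity input the emptiness does not follow. (For comparison, the paper does not prove this lemma internally at all; it imports it from GHN Prop.\ 3.6.4/3.6.5, and its own analogues of your computation are \cref{jrxalcove}, \cref{lefttorightgen} and \cref{nonemptyetaS}, whereas you route through \thma.)
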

This proves that \textit{if $X_x(b)\neq\emptyset$ then $\sigmasupp(\eta_\sigma(x))=\mathbb{S}$}.\footnote{However, note that the proof of \Cref{nonemptyS} uses \cite[Proposition 3.5.1]{GHN} (``$\sigma$-conjugacy classes never fuse'') which was stated without the assumption that $b$ is basic but in fact needs that assumption. See \cite{ghnerror}.} For the reverse direction, we need the following theorem:
\begin{prop}[{\cite[Theorem 4.4.7]{GHN}}]\label{NLOnonempty}
Let $b$ be basic. If $x$ satisfies no Levi obstruction (NLO), then $X_x(b)\neq\emptyset$. Precisely, ``no Levi obstruction'' means the following: for every pair $(J,w)$ with $\sigma$-stable $J$ and $w\in W_0$ such that $x\in\extw$ is a $(J,w)_\sigma$-alcove, there exists $b_J\in w\extw_J\sigma(w)^{-1}$ such that
\begin{enumerate}
    \item $\kappa(b)=\kappa(b_J)$
    \item $\nu_{b_J}=\nu_b$
    \item $\kappa_J(w^{-1}b_J\sigma(w))=\kappa_J(w^{-1}x\sigma(w))$
\end{enumerate}
\end{prop}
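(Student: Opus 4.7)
The plan is to prove this by parabolic/Levi reduction, using the $(J,w)_\sigma$-alcove structure as the descent mechanism, and closing with the known nonemptiness criteria at parahoric level (Mazur's inequality). I would proceed by induction on the semisimple rank of $G$ (or equivalently, on $|\bbS|$), so that whenever a proper $(J,w)_\sigma$-alcove structure is available with $J \subsetneq \bbS$, the corresponding Levi $M_J$ has strictly smaller rank and the inductive hypothesis applies to it.

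The base case is when no proper $(J,w)_\sigma$-alcove structure is available, or when $x$ already lies in the shrunken Weyl chambers. In the latter case, apply $x$ as its own ``$(\bbS,1)_\sigma$-alcove'' datum; the NLO hypothesis then supplies a $b_\bbS = b$ with $\kappa(x) = \kappa(b)$, and Theorem B (\cref{GHNthm}) combined with \cref{nonemptyS} forces $\sigmasupp(\eta_\sigma(x)) = \bbS$ (otherwise $\nu_{\mu_x}\neq \nu_b$ would violate (2) of NLO), yielding $X_x(b)\neq\emptyset$. For the inductive step, assume $x$ is not in the shrunken Weyl chambers, so $x$ lies in some critical strip and one can exhibit a proper $\sigma$-stable $J \subsetneq \bbS$ and $w \in W_0$ with $x$ a $(J,w)_\sigma$-alcove. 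The NLO hypothesis produces $b_J \in w\extw_J\sigma(w)^{-1}$ satisfying (1)–(3).

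The key technical step, and the main obstacle, is the \emph{parabolic lift}: promoting a nonempty single affine Deligne-Lusztig variety $X^{M_J}_{w^{-1}x\sigma(w)}(w^{-1}b_J\sigma(w))$ inside the Levi to a nonempty $X_x(b)$ inside $G$. This uses the semi-infinite Iwasawa-type decomposition $\brG = U_P(\brF)\cdot M_J(\brF)\cdot I$ for the parabolic $P$ with Levi $w M_J w^{-1}$, and seeks $g = u\cdot m$ with $m^{-1}b_J\sigma(m) \in I_{M_J}\dot{y}I_{M_J}$ (where $y = w^{-1}x\sigma(w)$) and $u$ chosen in the unipotent radical to absorb the discrepancy. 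The positivity condition (2) of \cref{jwsalcove}, namely $k(a,x)\ge k(a,\id)$ for $a \in w(\Phi^+\setminus\Phi^+_J)$, is exactly what guarantees that the Iwahori twisted conjugation $g \mapsto g^{-1}b\sigma(g)$ lands in $I\dot{x}I$ rather than a neighboring Iwahori cell; this is the heart of the argument and the bookkeeping of the $\sigma$-twist over the unipotent radical is the delicate piece.

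To feed the induction, one checks that NLO for $x$ in $G$ restricts to NLO for $y = w^{-1}x\sigma(w)$ in $M_J$: any $(J',w')_\sigma$-alcove structure on $y$ inside $M_J$ (with $J' \subseteq J$) pulls back to a $(J',ww')_\sigma$-alcove structure on $x$ in $G$ via \cref{jwsalcove}, and the NLO data for the latter restricts to valid NLO data for the former by conditions (1)–(3). By induction on rank, $X^{M_J}_y(w^{-1}b_J\sigma(w)) \neq \emptyset$; lifting produces an element of $X_x(b)$ (after using condition (1), $\kappa(b) = \kappa(b_J)$, to guarantee that $b_J$ and $b$ are $\sigma$-conjugate in $\brG$, which follows because $b$ is basic so $B(G)$-classes are determined by Kottwitz and Newton invariants, both matched by (1) and (2)). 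This closes the induction and establishes $X_x(b)\neq\emptyset$.
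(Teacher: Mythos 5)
There is a genuine gap, in fact two. First, your argument is circular within the logical structure of the paper: \Cref{NLOnonempty} is quoted from \cite[Theorem 4.4.7]{GHN} precisely because it is one of the two inputs from which \cref{GHNthm} (\thmb) is deduced (the paper states this explicitly in \cref{knownresult}). So you cannot invoke \cref{GHNthm} to settle your ``shrunken'' base case; the nonemptiness direction of \thmb\ \emph{is} the content of \cref{NLOnonempty} in that case. (Your parenthetical reasoning there is also off: for the trivial pair $(\bbS,w)$ the NLO condition is essentially automatic once $\kappa(x)=\kappa(b)$, since one may take $b_\bbS$ to be a $\sigma$-straight representative of $[b]$; condition (2) constrains $b_J$, not $\mu_x$, and imposes nothing on $\sigmasupp(\etasigma(x))$.) Second, and more fatally, your inductive step rests on the claim that any $x$ outside the shrunken Weyl chambers admits a proper $(J,w)_\sigma$-alcove structure with $J\subsetneq\bbS$. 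This is false: by \cref{suppinJgen}, an element $x$ in a single critical strip with $\sigmasupp(\etasigma(x))=\sigmasupp(\sigma^{-1}(s_x)\etasigma(x)s_x)=\bbS$ is not a $(J,w)_\sigma$-alcove for any proper $J$, and \cref{thma} shows that for $\ell(x)\gg0$ the nonempty cases are \emph{exactly} the ones with no proper alcove datum. Those are the hard cases: there NLO is vacuous, your recursion has nothing to descend to, and your ``base case when no proper $(J,w)_\sigma$-alcove structure is available'' is left with no argument at all. In other words, the rank induction never touches the actual content of the theorem.

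For comparison: the paper itself gives no proof but cites \cite[Theorem 4.4.7]{GHN}, whose argument is not a rank induction with a parabolic lift. It proceeds by induction on $\ell(x)$ using the Deligne--Lusztig reduction steps $x\mapsto sx\sigma(s)$, and in the terminal case (when $x$ is of minimal length in its $\sigma$-conjugacy class of $\extw$) it invokes the He--Nie theory of minimal length elements, which identifies the $\sigma$-conjugacy class of $I\dot xI$ directly from $(\kappa(x),\nu_x)$; the $(J,w)_\sigma$-alcove/Hodge--Newton mechanism you describe enters there in the emptiness direction, not as the engine of the nonemptiness proof. Your Levi-descent compatibility (that NLO for $x$ in $G$ restricts to NLO for $w^{-1}x\sigma(w)$ in $M_J$, and that the positivity condition in \cref{jwsalcove}(2) permits the lift back to $G$) is plausible but only asserted; even if it were carried out, the structural issues above would remain.
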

If $x$ lies in the shrunken Weyl chamber and $\sigmasupp(\eta_\sigma(x))=\mathbb{S}$, by \Cref{suppinJ} below, the only $J$ such that $x$ is a $(J,w)_\sigma$-alcove is $J=\mathbb{S}$. For $J=\mathbb{S}$, we can let $b_J$ be any element in the $\sigma$-straight conjugacy class of $\extw$ corresponding to $[b]\in B(G)$ (\cite[3.3]{He14}) so that $x$ satisfies NLO and $X_x(b)\neq\emptyset$ by \Cref{NLOnonempty}.
\end{proof}

\begin{lem}[{\cite[Proposition 4.1.1]{GHN}}]\label{suppinJ}Let $x\in\extw$ lie in the shrunken Weyl chambers. If $x$ is a $(J,w)_\sigma$-alcove for a $\sigma$-stable subset $J$ of $\mathbb{S}$, then $\sigmasupp(\eta_\sigma(x))\subset J$.
\end{lem}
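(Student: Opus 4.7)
The plan is to reduce the statement to an inversion-set condition in the finite Weyl group $W_0$ and then verify it by combining the two $(J,w)_\sigma$-alcove conditions with the shrunken constraint. Since $J$ is $\sigma$-stable, $\sigmasupp(\eta_\sigma(x)) \subset J$ is equivalent to $\eta_\sigma(x) \in W_J$, and this is the form I would aim at.

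Writing $x = v_x t^{\mu_x} w_x$ with $\mu_x$ dominant gives $\eta_\sigma(x) = \sigma^{-1}(w_x) v_x$, and a direct rearrangement yields
\[
\eta_\sigma(x) = v\, u\, \sigma(v)^{-1}, \qquad v := \sigma^{-1}(w_x) w, \qquad u := w^{-1} v_x w_x \sigma(w).
\]
Condition (1) of being a $(J,w)_\sigma$-alcove (after projection to $W_0$) gives $u \in W_J$, so it suffices to show $v \in W_J$. Writing $u = (w^{-1} v_x)(w_x \sigma(w))$, once $w^{-1} v_x \in W_J$ is known, condition (1) forces $w_x \sigma(w) \in W_J$ as well, and hence $v = \sigma^{-1}(w_x\sigma(w)) \in W_J$ by the $\sigma$-stability of $J$. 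The entire task therefore reduces to proving $w^{-1} v_x \in W_J$, equivalently $v_x^{-1} w(\beta) \in \Phi^+$ for every $\beta \in \Phi^+ \setminus \Phi_J^+$.

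To verify this, fix such a $\beta$, set $a := w\beta$ and $\gamma := v_x^{-1} a$, and assume for contradiction that $\gamma \in \Phi^-$; let $\alpha := -\gamma \in \Phi^+$. Using the formulas $k(a,x) = \langle\gamma,\mu_x\rangle + \delta_{w_x^{-1}\gamma}$ and $k(a,\id) = \delta_{v_x\gamma}$, condition (2) reads $\langle\gamma,\mu_x\rangle + \delta_{w_x^{-1}\gamma} \geq \delta_{v_x\gamma} \geq -1$. The two estimates on $\mu_x$ I would use are: the alcove $t^{\mu_x}w_x$ lying in the closed dominant chamber forces $\langle\alpha,\mu_x\rangle \geq 1$ whenever $w_x^{-1}\alpha \in \Phi^-$; and the shrunken hypothesis forces $\langle\alpha,\mu_x\rangle + \delta_{w_x^{-1}\alpha} \neq 0$ for every $\alpha \in \Phi^+$. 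Splitting on the sign of $w_x^{-1}\alpha$ and combining these two, the LHS of the condition (2) inequality drops to $\leq -2$ in either sub-case, contradicting the $\geq -1$ bound.

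The main obstacle I anticipate is the simultaneous bookkeeping in the final case analysis: one has to juggle the signs of $v_x\gamma$, $\gamma$, and $w_x^{-1}\gamma$ at once, and recognize that the dominant-chamber lower bound on $\mu_x$ is the decisive input ruling out precisely the boundary configurations on which shrunkenness alone would be silent. Once that sign constraint is established, the rest is purely formal manipulation inside $X_*(T)_{\Gamma_0} \rtimes W_0$.
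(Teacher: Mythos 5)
Your reduction is sound and is essentially the paper's own route (it is the $r=\id$ specialization of the proof of \cref{suppinJgen}): the identity $\eta_\sigma(x)=v\,u\,\sigma(v)^{-1}$, the use of condition (1) to get $u\in W_J$, and the reduction of everything to $v_x^{-1}w(\Phi^+\setminus\Phi_J^+)\subset\Phi^+$ are all correct. The problem is in the final verification, specifically in your estimate (ii). The statement ``shrunken forces $\langle\alpha,\mu_x\rangle+\delta_{w_x^{-1}\alpha}\neq 0$ for every $\alpha\in\Phi^+$'' is false: that quantity equals $k(v_x\alpha,x)$, and shrunkenness of $x$ only rules out $k(\beta,x)=0$ for roots $\beta$ with the convention $k(\beta,\id)$ built in; when $v_x\alpha\in\Phi^-$ it rules out $k(v_x\alpha,x)=-1$, not $k(v_x\alpha,x)=0$. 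Equivalently, (ii) asserts that the dominant representative $t^{\mu_x}w_x=v_x^{-1}x$ avoids all critical strips, which is strictly stronger than $x$ being shrunken; the unnumbered corollary after \cref{kprop} records exactly the failing configuration $\langle\alpha,\mu_x\rangle=0$, $w_x^{-1}\alpha\in\Phi^+$, $v_x\alpha\in\Phi^-$. A concrete shrunken counterexample in type $A_2$: $v_x=w_0$, $w_x=s_2$, $\mu_x$ with $\langle\alpha_1,\mu_x\rangle=0$ and $\langle\alpha_2,\mu_x\rangle\gg 0$; here $\alpha=\alpha_1$ violates (ii). Since you also weakened the right-hand side of condition (2) to the uniform bound $\ge -1$, your contradiction genuinely fails in the sub-case $a=w\beta\in\Phi^+$: there the left-hand side can equal $-1$, and no contradiction with ``$\ge -1$'' arises, so the proof as written does not close.

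The repair is exactly the paper's argument and costs nothing extra: do not discard $\delta_{v_x\gamma}=k(a,\id)$. If $a=w\beta\in\Phi^+$, then $k(a,\id)=0$ while $k(a,x)=k(\gamma,t^{\mu_x}w_x)\le -1$ because $\gamma\in\Phi^-$ and $t^{\mu_x}w_x$ is a dominant alcove, contradicting condition (2) with no use of shrunkenness at all. If $a\in\Phi^-$, then $k(a,\id)=-1$, and shrunkenness applied to the root $a$ itself (i.e.\ $x\notin C_{-a}$, so $k(a,x)\neq k(a,\id)$) upgrades condition (2) to the strict inequality $k(a,x)>k(a,\id)\ge -1$, whence $k(\gamma,t^{\mu_x}w_x)=k(a,x)\ge 0$ and dominance forces $\gamma\in\Phi^+$, the desired contradiction. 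In short: the shrunken hypothesis must be applied to the strips through $x$ in the direction of $a$, not transported to the dominant chamber; with that single change your argument becomes the proof in the paper.
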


Finally, we have the following lemmas on the properties of dominant cocharacters and the Weyl group action on roots. For a subset $J$ of $\mathbb{S}$, we denote the lattice generated by the coroots $\alpha_i^\vee$ such that $s_i\in J$ by $Q_J^\vee$.
\newpage
\begin{lem}\label{wmu-mu}
    Let $\mu$ be a cocharacter and $w\in W_0$. Then, $\mu-w\cdot\mu\in Q_{\supp(w)}^\vee$.
\end{lem}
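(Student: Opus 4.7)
The plan is to proceed by induction on the length $\ell(w)$. The base case $w = \id_{W_0}$ is trivial since both sides vanish and $Q^\vee_{\supp(\id)} = Q^\vee_\emptyset = 0$. For the inductive step, I would factor $w = sw'$ with $s \in \bbS$ and $\ell(w') = \ell(w) - 1$; stripping a leftmost letter from any reduced expression for $w$ shows that $\supp(w) = \{s\} \cup \supp(w')$.

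The key algebraic identity I would exploit is
\[\mu - w\cdot\mu \;=\; (\mu - s\cdot\mu) \;+\; s\cdot(\mu - w'\cdot\mu),\]
which splits the desired claim into two pieces. The first piece equals $\langle \alpha_s,\mu\rangle\,\alpha_s^\vee$ and lies in $\mathbb{Z}\alpha_s^\vee \subseteq Q^\vee_{\{s\}} \subseteq Q^\vee_{\supp(w)}$. For the second piece, the induction hypothesis yields $\mu - w'\cdot\mu \in Q^\vee_{\supp(w')}$, so it remains to show that the simple reflection $s$ carries $Q^\vee_{\supp(w')}$ into $Q^\vee_{\supp(w)}$.

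That last containment is a short verification on generators. If $s \in \supp(w')$, the lattice $Q^\vee_{\supp(w')}$ is already stable under $W_{\supp(w')}$ and there is nothing to prove. If $s \notin \supp(w')$, then for every $t \in \supp(w')$ one computes
\[s\cdot\alpha_t^\vee \;=\; \alpha_t^\vee - \langle \alpha_s,\alpha_t^\vee\rangle\,\alpha_s^\vee \;\in\; \mathbb{Z}\alpha_s^\vee + \mathbb{Z}\alpha_t^\vee \;\subseteq\; Q^\vee_{\{s,t\}},\]
so $s$ sends each generator of $Q^\vee_{\supp(w')}$ into $Q^\vee_{\{s\}\cup\supp(w')} = Q^\vee_{\supp(w)}$, as needed.

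I do not anticipate any genuine obstacle here; this is essentially a bookkeeping lemma used later to track coroot-lattice memberships in the $W_x$-arguments. The only mildly delicate point is confirming $\supp(w) = \{s\} \cup \supp(w')$ in a length-additive factorization, but this is immediate from the definition of $\supp$ via reduced expressions.
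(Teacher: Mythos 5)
Your proof is correct, but the decomposition you use differs slightly from the paper's. You factor $w = sw'$ by peeling off a \emph{leftmost} letter and write
\[\mu - w\cdot\mu = (\mu - s\cdot\mu) + s\cdot(\mu - w'\cdot\mu),\]
so that the induction hypothesis gives $\mu - w'\cdot\mu \in Q^\vee_{\supp(w')}$, and you then need the extra observation that the simple reflection $s$ carries $Q^\vee_{\supp(w')}$ into $Q^\vee_{\{s\}\cup\supp(w')}$. The paper instead peels off a \emph{rightmost} letter, writing $w$ as $w_0 s$ with $\ell(w_0)=\ell(w)-1$ and using
\[\mu - w_0 s\cdot\mu = (\mu - s\cdot\mu) + \bigl(s\cdot\mu - w_0\cdot(s\cdot\mu)\bigr);\]
here the second summand is \emph{already} in the shape ``$\nu - w_0\cdot\nu$'' for $\nu = s\cdot\mu$, so the induction hypothesis (applied to the shorter word $w_0$ and the \emph{different} cocharacter $s\cdot\mu$) gives membership in $Q^\vee_{\supp(w_0)}$ directly, with no auxiliary stability-of-lattice lemma required. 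Both arguments are valid; the paper's right-stripping avoids the extra step, while your left-stripping keeps the cocharacter fixed throughout the induction at the cost of the one-line verification that $s\cdot\alpha_t^\vee = \alpha_t^\vee - \langle\alpha_s,\alpha_t^\vee\rangle\alpha_s^\vee$ stays in the enlarged coroot lattice. Either is fine for the uses made of this lemma later in the paper.
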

\begin{proof}
For any simple reflection $s_i$, the formula $s_i\cdot\mu=\mu-\langle\alpha_i,\mu\rangle\alpha_i^\vee$ tells us that $\mu-s_i\cdot\mu\in Q_{\{s_i\}}^\vee$. Note that $w$ is a product of simple reflections and so we can think inductively on $\ell(w)$. Considering $\mu-ws_i\cdot\mu=\mu-s_i\cdot\mu+ s_i\cdot\mu-w\cdot(s_i\cdot\mu)\in Q_{\{s_i\}}^\vee + Q_{\supp(w)}^\vee$, it is straightforward.
\end{proof}
The following corollary is stated in \cite[3.5 (1)]{GHN} without a proof. For completeness, we give a proof for this.
\begin{cor}\label{GHN351}
    Let $J$ be a $\sigma$-stable subset of $\mathbb{S}$. If $x\in \extw_J$, then $\nu_{v_x\cdot\mu_x}-\nu_x\in Q_{J,\bbQ}^\vee$.
\end{cor}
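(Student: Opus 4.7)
The plan is to compare $\nu_x$ and $\nu_{v_x \cdot \mu_x}$ by expanding both using the explicit averaging formula for the Newton map on $\extw$, then controlling the difference termwise via \cref{wmu-mu}.

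First, I would set $\lambda := v_x \cdot \mu_x$ and $u := v_x w_x$, so that $x = t^\lambda u$. From $x \in \extw_J = X_*(T)_{\Gamma_0} \rtimes W_J$, the finite part $u$ must lie in $W_J$; $\sigma$-stability of $J$ then gives $\sigma^i(u) \in W_J$ for every $i$, and consequently the partial products $u_i := u \sigma(u) \cdots \sigma^{i-1}(u)$ remain in $W_J$, with $u_0 = 1$.

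Next, I would iterate the identity $x\sigma(x) = t^{\lambda + u\cdot\sigma(\lambda)}\, u\sigma(u)$ to deduce that, for any $N$ with $u_N = 1$ and with $\sigma^N$ acting trivially on $X_*(T)_{\Gamma_0}$ (both are automatic once $N$ is large enough and sufficiently divisible, because $W_J$ is finite and $\sigma$ has finite order on the cocharacter lattice), one has
$$N\nu_x = \sum_{j=0}^{N-1} u_j \cdot \sigma^j(\lambda) \qquad\text{and}\qquad N\nu_\lambda = \sum_{j=0}^{N-1} \sigma^j(\lambda).$$
Subtracting gives
$$N(\nu_x - \nu_\lambda) = \sum_{j=0}^{N-1}\bigl(u_j \cdot \sigma^j(\lambda) - \sigma^j(\lambda)\bigr),$$
and each summand has the form $v \cdot \mu - \mu$ with $v \in W_J$, so by \cref{wmu-mu} (and $-Q_J^\vee = Q_J^\vee$) it lies in $Q_J^\vee$. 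Dividing by $N$ yields $\nu_x - \nu_{v_x \cdot \mu_x} \in Q_{J,\bbQ}^\vee$, as claimed.

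There is no real obstacle here: the entire content of the corollary is that the finite-part data entering the iterated average stays inside $W_J$, which is immediate from the semidirect-product decomposition of $\extw_J$ together with the $\sigma$-stability of $J$. The only bookkeeping item is the existence of a common period $N$, which follows at once from the finiteness of $W_J$ and of the order of $\sigma$ on $X_*(T)_{\Gamma_0}$.
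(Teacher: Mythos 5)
Your proof is correct and follows essentially the same route as the paper's: both write $x=t^{v_x\cdot\mu_x}v_xw_x$, expand $\nu_x$ and $\nu_{v_x\cdot\mu_x}$ via the averaging formula so that the difference is a sum of terms $\sigma^j(v_x\cdot\mu_x)-u_j\cdot\sigma^j(v_x\cdot\mu_x)$ with the partial products $u_j=\prod_{0\le i\le j-1}\sigma^i(v_xw_x)\in W_J$, and then apply \cref{wmu-mu} termwise. Your version merely makes explicit the bookkeeping about a common period $N$, which the paper leaves implicit.
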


\begin{proof}
The difference is a $\bbQ$-multiple of $\sum_{i}(\sigma^i(v_x\cdot\mu_x)-(v_xw_x\sigma)^i(v_x\cdot\mu_x))$. Noting that $\prod_{0\le j\le i-1}\sigma^j(v_xw_x)\in W_J$, apply \Cref{wmu-mu} to get the conclusion.
\end{proof}

Lastly, we prove:
\begin{lem}\label{wronglem}
Suppose that the Dynkin diagram of $G$ is $\sigma$-connected. Let $J$ be a proper $\sigma$-stable subet of $\mathbb{S}$. If $\mu\in Q^\vee_{J,\mathbb{Q}}$ is dominant, then $\mu=0$.
\end{lem}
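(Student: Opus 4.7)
The plan is to argue by contradiction using the positivity of coroot coordinates of dominant elements. Write $\mu = \sum_{j \in J} c_j \alpha_j^\vee$. Since $\mu$ is dominant and lies in the rational span of the simple coroots, the non-negativity of the entries of the inverse Cartan matrix (Onishchik--Vinberg, \cref{oldpos}) forces $c_j \geq 0$ for every $j \in J$. Assuming $\mu \neq 0$ for contradiction, set $J' \colonequals \{j \in J : c_j > 0\}$, a nonempty subset of $J$.

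The next step is to test dominance against each simple root $\alpha_i$ with $i \in \mathbb{S} \setminus J'$:
\[
0 \le \langle \alpha_i, \mu\rangle = \sum_{j \in J'} c_j \langle \alpha_i, \alpha_j^\vee\rangle \le 0,
\]
where the right-hand inequality uses that off-diagonal Cartan entries $\langle \alpha_i, \alpha_j^\vee\rangle$ are non-positive for $i \neq j$. Hence each summand vanishes, and since $c_j > 0$ for $j \in J'$, we deduce $\langle \alpha_i, \alpha_j^\vee\rangle = 0$ for all $i \in \mathbb{S} \setminus J'$ and $j \in J'$. Diagrammatically this says $J'$ is a union of connected components of the Dynkin diagram on $\mathbb{S}$.

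Finally, since $\mu$ is not assumed $\sigma$-invariant, $J'$ itself need not be $\sigma$-stable, so I would enlarge it to $J'' \colonequals \bigcup_{n \geq 0} \sigma^n(J')$. Because $\sigma$ acts by a diagram automorphism, it permutes connected components, so $J''$ is still a union of connected components; it is nonempty, $\sigma$-stable, and contained in $J$ (since $J$ is $\sigma$-stable), hence a proper subset of $\mathbb{S}$. The decomposition $\mathbb{S} = J'' \sqcup (\mathbb{S} \setminus J'')$ into two nonempty $\sigma$-stable parts with no Dynkin edges between them contradicts $\sigma$-connectedness. I do not foresee a serious obstacle here; the only delicate point is remembering to pass from $J'$ to its $\sigma$-closure before invoking the hypothesis, while the rest reduces to Cartan-matrix bookkeeping together with the cited positivity input.
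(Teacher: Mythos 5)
Your proof is correct, and while it shares the paper's overall skeleton (the coroot-support of $\mu$ is a union of connected components sitting inside $J$, which clashes with $\sigma$-stability and $\sigma$-connectedness), the central step is carried out by a genuinely different mechanism. The paper expands $\mu$ in fundamental coweights (dominance gives non-negative coefficients) and then uses the \emph{strict} positivity in \cref{oldpos}: a nonzero dominant $\mu$ is a positive combination of \emph{all} simple coroots of some connected component, so that whole component lies in $J$ and $\sigma$-stability plus $\sigma$-connectedness force $J=\mathbb{S}$. You instead work directly in the coroot basis, use only the non-negativity of the inverse Cartan matrix to get $c_j\ge 0$, and then show the support $J'$ is a union of components via the elementary squeeze $0\le\langle\alpha_i,\mu\rangle\le 0$ for $i\notin J'$, which needs nothing beyond the non-positivity of off-diagonal Cartan entries; finally you pass to the $\sigma$-saturation $J''$ and contradict $\sigma$-connectedness. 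What your route buys is independence from the strict positivity of the inverse Cartan matrix (the Onishchik--Vinberg table input), requiring only its sign pattern, and it makes explicit the $\sigma$-saturation step that the paper leaves implicit when it jumps from ``$J$ contains a connected component'' to ``$J=\mathbb{S}$''; what the paper's route buys is brevity, since \cref{oldpos} is already established and needed elsewhere (e.g.\ in \cref{lefttorightgen}). Both arguments are sound.
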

\begin{proof}
$Q^\vee_{J,\mathbb{Q}}\subset Q^\vee_{\mathbb{Q}}=P^\vee_{\mathbb{Q}}$, hence $\mu$ is a $\mathbb{Q}$-linear combination of fundamental coweights. As it is dominant, all the coefficients are non-negative. If $\mu\neq0$, then at least one coefficient is positive so that it is a positive linear combination of coroots of a connected component by \Cref{oldpos}. Hence, $J$ contains some connected component completely. As the Dynkin diagram is $\sigma$-connected, we must have $J=\mathbb{S}$ which is contradiction.
\end{proof}

The following \Cref{oldpos} has been probably known to experts for a long time. However, to the best of our knowledge, a reference for this lemma is hard to locate. So, following Michael Rapoport's suggestion, we record it here. In this paper, this is particularly needed to remedy some argument in \cite{GHN} using \textit{loc.cit.} 3.5. (2) which is not correct. See \Cref{nonemptysetaS}.

\begin{lem}\label{oldpos}\footnote{The letter $C$ will be used in this lemma to follow the historical notations. The letter $C$ appearing in this paper is always referring to the critical strip, except here. We believe there would be no confusion occurring.}
    Let $\bar{C}$ be the closed Weyl chamber defined as $\{\mu\in V: \langle \mu,\alpha\rangle\ge0\text{ for all }\alpha\in\Delta\}$ and $C^\vee$ be the obtuse Weyl chamber defined as $\{\mu\in V:\mu=\sum_{i\in\bbS}c_i\alpha_i^\vee,~c_i\ge0\}$. Then $\bar{C}\setminus\! \{\vec{0}\}$ is contained in the interior of $C^\vee$. In other words, a fundamental coweight is a positive linear combination of simple coroots.
\end{lem}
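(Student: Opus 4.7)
The plan is to prove the inclusion in two stages, and then deduce the ``in other words'' reformulation as a corollary. First I would show that every $\mu\in\bar{C}$ admits a \emph{non-negative} expansion $\mu=\sum_i c_i\alpha_i^\vee$ in the simple coroot basis; second I would upgrade this to strict positivity whenever $\mu\neq\vec{0}$. Since any nonzero dominant $\mu$ expands non-negatively over fundamental coweights as $\mu=\sum_k\langle\mu,\alpha_k\rangle\omega_k^\vee$ with at least one positive coefficient, the statement about each $\omega_k^\vee$ is equivalent to the main statement via specialization/superposition.

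For the first stage, I would fix a $W$-invariant positive definite inner product $(\cdot,\cdot)$ on $V$. Under the induced identification $V\simeq V^*$, dominance of $\mu$ translates (up to a positive scalar on each simple direction) into $(\mu,\alpha_j^\vee)\geq 0$ for every $j$, and distinct simple coroots satisfy $(\alpha_i^\vee,\alpha_j^\vee)\leq 0$ for $i\neq j$. Assuming for contradiction that some $c_i<0$, I would split $\mu=\mu_++\mu_-$ where $\mu_-$ collects the negative-coefficient terms and $\mu_+$ the rest, so that $-\mu_-$ is a nonzero non-negative combination of simple coroots. Computing
\[(\mu,-\mu_-)=(\mu_+,-\mu_-)-(\mu_-,\mu_-),\]
the cross-term is $\leq 0$ by the obtuseness of simple coroots while $-(\mu_-,\mu_-)<0$ by positive definiteness, giving $(\mu,-\mu_-)<0$. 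On the other hand, expanding $-\mu_-$ and using dominance gives $(\mu,-\mu_-)=\sum_{c_j<0}(-c_j)(\mu,\alpha_j^\vee)\geq 0$, a contradiction.

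For the second stage, assuming non-negativity, let $I=\{i:c_i=0\}$ and compute for any $i\in I$ that
\[\langle\mu,\alpha_i\rangle=\sum_{j\neq i}c_j\langle\alpha_j^\vee,\alpha_i\rangle,\]
a sum of non-positive terms because the off-diagonal Cartan entries $A_{ji}=\langle\alpha_j^\vee,\alpha_i\rangle$ are $\leq 0$. Dominance forces the whole sum to be zero, so each term vanishes individually; in particular $\langle\alpha_j^\vee,\alpha_i\rangle=0$ for every $j\notin I$, meaning no node of $I$ is joined to any node of $\bbS\setminus I$ in the Dynkin diagram. Connectedness of the diagram (the implicit irreducibility assumption, consistent with the standing hypothesis that $G$ is simple) together with $\mu\neq\vec{0}$ forces $I=\emptyset$, placing $\mu$ in the interior of $C^\vee$.

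The genuinely substantive step is the first one: it relies on positive definiteness of the $W$-invariant form together with the sign geometry of simple coroots, and it is exactly this step that fails outside the irreducible setting (consider $(1,0)\in A_1\times A_1$). The second stage, once non-negativity is in hand, is a clean combinatorial consequence of the sign pattern of the Cartan matrix together with connectedness. I do not expect any serious obstruction beyond carefully recording the translation between the $\langle\cdot,\cdot\rangle$ pairing and the $W$-invariant form $(\cdot,\cdot)$, and being precise that ``simple coroots make obtuse angles'' is a statement about the $W$-invariant form rather than about the duality pairing.
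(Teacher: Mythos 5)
Your proof is correct, but it follows a genuinely different route from the paper's. The paper reduces at once to the fundamental coweights: writing $\varpi_i^\vee=\sum_j\pi_{ij}\alpha_j^\vee$, it identifies $(\pi_{ij})$ with the inverse of the Cartan matrix and then quotes the explicit tables of Onishchik--Vinberg (with Lusztig--Tits cited for a general treatment) for the positivity of all entries; the argument is ultimately a table/classification lookup. You instead give a uniform, classification-free argument: first non-negativity of the coroot coefficients of an arbitrary dominant $\mu$, via a $W$-invariant form, the splitting $\mu=\mu_++\mu_-$, and the obtuseness $(\alpha_i^\vee,\alpha_j^\vee)\le 0$ for $i\ne j$; then strict positivity from the sign pattern of the Cartan matrix together with connectedness of the Dynkin diagram. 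Your route buys independence from the tables and makes visible exactly where irreducibility is used; the paper's route is shorter and has the side benefit of recording the explicit coefficients $\pi_{ij}$, whose actual sizes (not merely their positivity) are reused later, e.g. in the proofs of Propositions \ref{nonemptysetaS} and \ref{lefttorightgen}.

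One small inaccuracy in your closing commentary: in the reducible case it is the second stage, not the first, that breaks down. Non-negativity uses only positive definiteness and obtuseness and holds for any root system; in $A_1\times A_1$ the dominant element $\varpi_1^\vee=\tfrac{1}{2}\alpha_1^\vee$ still has non-negative coefficients, it merely fails to lie in the interior of $C^\vee$, which is precisely the strict-positivity step requiring connectedness. This slip does not affect the validity of your proof of the lemma as stated.
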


\begin{proof}
    The fundamental coweights $\varpi_i^\vee$ form a basis of $V$ and they satisfy $\langle\varpi_i^\vee,\alpha_j\rangle=\delta_{ij}$. Hence, $\bar{C}\setminus\{\vec{0}\}=\{\sum_{i\in\mathbb{S}}d_i\varpi_i^\vee:\text{ not all }d_i\text{'s are }0\}$. Hence, it is enough to show that $\langle \varpi_i^\vee,\varpi_j\rangle\equalscolon \pi_{ij}$, the coefficient of $\alpha_j^\vee$ in the expression of $\varpi_i^\vee$, is positive.
        
    First of all, the coefficients from \cite[Ch.VI, \S4.5-\S4.13, (VI)]{Bourbaki} prove the dual version. For our version, recall the Cartan matrix $A$ whose entries are $A_{ij}=\langle\alpha_i^\vee,\alpha_j\rangle$. Then, $\delta_{ik}=\langle \varpi_i^\vee,\alpha_k\rangle=\langle \sum_j\pi_{ij}\alpha_j^\vee,\alpha_k\rangle= \sum_j\pi_{ij}A_{jk}$. Therefore, $\pi_{ij}$'s are the entries of the inverse of $A$, which we know to be positive by \cite[Reference Chapter. §2. Table 2]{OnishchikVinberg} (note that one should take their transpose). For a more general situation, see \cite[5.]{LusztigTits}.
\end{proof}

\section{Proof of main theorems}
\subsection{Closed subsets of a root system}

\begin{defn}
    A subset $\Psi$ of the set of roots $\Phi$ is called\footnote{The terms \textit{closed} and \textit{parabolic} are from \cite[Ch.VI, \S1.7. Définition 4.]{Bourbaki} and the term \textit{radical} is from \cite[1.]{Djokovic}.}
    \begin{enumerate}
        \item closed if $\alpha,\beta\in\Psi$ implies $\{\alpha+\beta\}\cap\Phi\subset\Psi$,
        \item radical if $\Psi\cap-\Psi=\emptyset$, and
        \item parabolic if $\Psi\cup-\Psi=\Phi$.
    \end{enumerate}
\end{defn}
Being closed or radical are invariant under any $W_0$-action. An example of a radical closed subset is any subset of all positive roots or $W_0$-conjugate of that. In fact, this is essentially the only example.
\begin{lem}\label{radicalclosedispositive}
    For a radical closed subset $\Psi$, there exists $w\in W_0$ such that $\Psi\subset w\Phi^+$.
\end{lem}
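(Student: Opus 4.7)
\textit{The plan} is to produce a regular linear functional $v \in V^*$ satisfying $\langle \alpha, v \rangle > 0$ for every $\alpha \in \Psi$. Given such a $v$, the set $\{\beta \in \Phi : \langle \beta, v \rangle > 0\}$ is a positive system of roots, hence of the form $w \Phi^+$ for a unique $w \in W_0$, and it contains $\Psi$. Regularity of $v$ can always be arranged by a small perturbation preserving the finitely many strict inequalities on $\Psi$, so the task reduces to producing \emph{any} functional strictly positive on $\Psi$. By finite-dimensional convex separation, this is equivalent to showing that $\operatorname{cone}(\Psi) := \{\sum_{\alpha \in \Psi} c_\alpha \alpha : c_\alpha \geq 0\}$ is pointed, i.e., that $0$ admits no nontrivial non-negative linear combination of elements of $\Psi$.

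\textit{Pointedness via minimal descent.} Suppose for contradiction that $\sum_{\alpha \in \Psi} c_\alpha \alpha = 0$ with $c_\alpha \geq 0$ not all zero; clearing denominators, assume $c_\alpha \in \mathbb{Z}_{\geq 0}$, and among such relations pick one minimizing $N := \sum_\alpha c_\alpha$. Let $T := \{\alpha : c_\alpha > 0\}$. The cases $|T| = 1, 2$ are disposed of directly: $|T| = 1$ gives the absurd $c_\alpha \alpha = 0$, while $|T| = 2$ forces the two roots antiparallel, so in the reduced setting (or by one further application of closure in the non-reduced case) a pair $\pm \gamma$ lands in $\Psi$, contradicting radicality. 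For $|T| \geq 3$, expanding
\[ 0 = \Bigl\| \sum_\alpha c_\alpha \alpha \Bigr\|^2 = \sum_\alpha c_\alpha^2 \|\alpha\|^2 + 2 \sum_{\{\alpha, \beta\} \subset T} c_\alpha c_\beta (\alpha, \beta) \]
gives a strictly positive diagonal, so some distinct $\alpha, \beta \in T$ satisfy $(\alpha, \beta) < 0$. The standard root-string argument then yields $\gamma := \alpha + \beta \in \Phi$, and closure of $\Psi$ upgrades this to $\gamma \in \Psi$. Assuming $c_\alpha \leq c_\beta$ and substituting $\alpha + \beta = \gamma$ produces a new non-negative integer relation $c_\alpha \gamma + (c_\beta - c_\alpha) \beta + \sum_{\delta \in T \setminus \{\alpha, \beta\}} c_\delta \delta = 0$ (with $\gamma$'s coefficient possibly augmenting a pre-existing one). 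It is still nontrivial since $\gamma$'s coefficient is $\geq c_\alpha > 0$, and direct bookkeeping shows its total weight is $N - c_\alpha < N$, contradicting minimality.

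\textit{The main obstacle} is the descent step for $|T| \geq 3$: one must carefully verify that the substitution $c_\alpha \alpha + c_\beta \beta \rightsquigarrow c_\alpha \gamma + (c_\beta - c_\alpha) \beta$ strictly decreases the total weight and leaves a nontrivial relation, regardless of whether $\gamma$ already belonged to the support $T$. The small case $|T| = 2$ is a minor technicality handled by the reducedness of the ambient reduced root system $\Sigma$ built into the preliminaries, or by invoking closure once more in the non-reduced case. Once pointedness is established, producing the separating functional $v$ and its regular perturbation to define $w \Phi^+ \supset \Psi$ is routine.
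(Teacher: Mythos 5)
Your argument is correct, and it is genuinely different in character from what the paper does: the paper disposes of this lemma by citation (Sopkina, Proposition 3; cf.\ Bourbaki, Ch.~VI, \S1.7, Proposition 22), whereas you reprove the classical fact from scratch. Your route is convex-geometric: radicality plus closure force $\operatorname{cone}(\Psi)$ to be pointed (via the weight-descent using the identity $0=\lVert\sum c_\alpha\alpha\rVert^2$ and the root-string lemma to stay inside $\Psi$ after replacing $c_\alpha\alpha+c_\beta\beta$ by $c_\alpha(\alpha+\beta)+(c_\beta-c_\alpha)\beta$), then Gordan-type separation produces a functional strictly positive on $\Psi$, a small perturbation makes it regular, and the associated positive system $\{\beta:\langle\beta,v\rangle>0\}$ is $w\Phi^+$ for some $w\in W_0$ by simple transitivity of $W_0$ on positive systems. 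What the citation buys is brevity and an established reference; what your proof buys is self-containedness, uniformity (it covers the non-reduced case with only the parenthetical extra step you indicate), and it essentially reconstructs the Bourbaki argument in modern convex-cone language. Two small points you should make explicit, though neither is a gap: (i) clearing denominators presupposes a \emph{rational} relation, which is legitimate because the roots lie in a lattice, so a rational homogeneous system with a nontrivial nonnegative real solution has a nontrivial nonnegative rational one; (ii) before invoking the root-string lemma you need $\beta\neq-\alpha$, which is exactly what radicality of $\Psi$ guarantees for $\alpha,\beta\in T\subset\Psi$.
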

\begin{proof}
\cite[Proposition 3]{Sopkina} (cf. \cite[Ch.VI, \S1.7. Proposition 22]{Bourbaki})
\end{proof}
The following proposition is stronger than \Cref{radicalclosedispositive} and crucial to generalize our work to multiple critical strips case. The proof relies on the classification of parabolic subsets.
\begin{prop}\label{mainprop2}
    Let $\Psi_r\subset\Psi_p$ be two closed subsets of $\Phi$ such that $\Psi_r$ is radical and $\Psi_p$ is parabolic. Then, there exists $w\in W_0$ such that \[w\Psi_r\subset\Phi^+\subset w\Psi_p.\]
\end{prop}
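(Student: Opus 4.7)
The plan is to reduce to the case of a standard parabolic subset $\Psi_p$ and then find the required $w$ inside the Weyl group of the corresponding Levi. The key observation driving this is that once $\Psi_p$ has the standard form $\Phi^+ \cup \Phi_J^-$, the inclusion $\Phi^+ \subset w\Psi_p$ holds automatically for every $w$ in the Levi Weyl group $W_J$, so the whole problem collapses to arranging the radical part $\Psi_r$.

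First I would invoke the classification of parabolic subsets (Bourbaki, Ch.~VI, \S1.7, Prop.~20): every parabolic subset of $\Phi$ is $W_0$-conjugate to a standard one $\Phi^+ \cup \Phi_J^-$ for some $J \subset \bbS$. After a preliminary $W_0$-conjugation I may assume $\Psi_p = \Phi^+ \cup \Phi_J^-$, whose symmetric part $\Psi_p \cap -\Psi_p$ is the root subsystem $\Phi_J$. For any $w \in W_J$, the standard fact that $W_J$ permutes $\Phi^+ \setminus \Phi_J^+$ within itself gives, for $\alpha \in \Phi^+ \setminus \Phi_J^+$, that $w^{-1}\alpha \in \Phi^+ \setminus \Phi_J^+ \subset \Psi_p$; and for $\alpha \in \Phi_J^+$, that $w^{-1}\alpha \in \Phi_J \subset \Psi_p$. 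Hence $\Phi^+ \subset w\Psi_p$ automatically, and the task is reduced to producing $w \in W_J$ with $w\Psi_r \subset \Phi^+$.

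For this, I would observe that $\Psi_r \cap \Phi_J$ is a radical closed subset of the subsystem $\Phi_J$: closedness follows from $\Phi_J$ being a closed subsystem of $\Phi$, and radicality is inherited directly from $\Psi_r$. Applying \cref{radicalclosedispositive} to $\Phi_J$ with its Weyl group $W_J$ (in the equivalent formulation: there exists $w$ with $w\Psi \subset \Phi_J^+$) produces some $w \in W_J$ with $w(\Psi_r \cap \Phi_J) \subset \Phi_J^+$. For $\alpha \in \Psi_r \setminus \Phi_J$, the inclusion $\Psi_r \subset \Psi_p$ forces $\alpha \in \Phi^+ \setminus \Phi_J^+$, and the same sign-preservation property then gives $w\alpha \in \Phi^+ \setminus \Phi_J^+ \subset \Phi^+$. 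Combining the two cases yields $w\Psi_r \subset \Phi^+$, and together with the automatic inclusion this $w$ satisfies $w\Psi_r \subset \Phi^+ \subset w\Psi_p$, as required.

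The only potentially delicate points are invoking correctly the classification of parabolic subsets and the sign-preservation property of $W_J$ acting on $\Phi \setminus \Phi_J$, both of which are standard results from Bourbaki. Otherwise the argument is a clean reduction of the general case to \cref{radicalclosedispositive} applied to the smaller root system $\Phi_J$, and no new ingredient beyond what is already in the excerpt is needed.
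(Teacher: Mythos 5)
Your proof is correct and follows essentially the same route as the paper: reduce to the standard parabolic form $\Phi^+\cup\Phi_J^-$, split $\Psi_r$ into its part inside $\Phi_J$ (handled by \cref{radicalclosedispositive} applied to the subsystem $\Phi_J$ with Weyl group $W_J$) and its part in $\Phi^+\setminus\Phi_J^+$ (handled by $W_J$-stability of that set). The only difference is cosmetic: you cite Bourbaki's classification of parabolic subsets directly, whereas the paper obtains the same normal form from Djokovi\'c's classification of closed subsets with closed complement, of which the parabolic case ($J=J'$) is a special instance.
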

\begin{proof}
We call a closed subset invertible if the complement (in the full $\Phi$) is also closed. A parabolic subset is an invertible subset and all invertible subsets are $W_0$-conjugate to $\Phi_J \cup(\Phi^+\setminus \Phi_{J'}^+)$ where $J\subset J'\subset\bbS$ and $J\bot (J'\setminus J)$ by \cite[Lemma 1 and Theorem 4]{Djokovic}. Only when $J=J'$, it is parabolic.

Hence, $\Psi_p=w_0 (\Phi^+\cup \Phi_J^-$) for some $w_0\in W_0$ and $J\subset\bbS$. Now, consider $w_0^{-1}\Psi_r\cap \Phi_J$ which is radical because it belongs to a radcial subset $w_0^{-1}\Psi_r$ and closed because it is the intersection of two closed subsets. Hence, there exists $w_1\in W_J$ such that $w_0^{-1}\Psi_r\cap\Phi_J\subset w_1 \Phi_J^+$ by \Cref{radicalclosedispositive}.

Note that $\Psi_r\subset\Psi_p=w_0(\Phi^+\cup \Phi_J^-)$ so that \[w_1^{-1}w_0^{-1}\Psi_r=w_1^{-1}(w_0^{-1}\Psi_r\cap \Phi_J) \cup w_1^{-1}(w_0^{-1}\Psi_r \cap (\Phi^+\setminus\Phi_J^+))\]But from above the first part belongs to $W_J^+$. The second set belongs to $w_1^{-1}(\Phi^+\setminus\Phi_J^+)$ which belongs to $\Phi^+$ as $w_1\in W_J$. Obviously, $w_1^{-1}w_0^{-1}\Psi_p=w_1^{-1}(\Phi^+\cup\Phi_J^-)\supset \Phi^+$.
\end{proof}

Recall the notation $x= v_x t^{\mu_x}w_x $ from \Cref{etasigma} and let $\Phi_x$ be the set of positive roots $\alpha$ such that $x\in CS_{v_x\alpha}$. The following is the most important observation towards our main theorem.

\begin{prop}\label{mainprop1}
$\Phi^+\setminus\Phi_x$ is a radical closed subset.
\end{prop}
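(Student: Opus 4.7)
The plan is to reduce both properties of $\Phi^+\setminus\Phi_x$ to direct statements about the integer invariants $k(\cdot,x)$, then conclude by the subadditivity of $k$ on pairs of roots.

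First I would write $v_x^{-1}x = t^{\mu_x}w_x$ with $\mu_x$ dominant and apply the general formula recorded after \Cref{klem} to obtain
\[
k(v_x\alpha, x) \;=\; \langle \alpha, \mu_x\rangle + \delta_{w_x^{-1}\alpha}\qquad(\alpha\in\Phi).
\]
Two observations then pin down $\Phi_x$. Since $t^{\mu_x}w_x$ is an alcove in the open dominant chamber, every $\alpha\in\Phi^+$ pairs strictly positively with its interior, forcing $k(v_x\alpha,x)\geq 0$. Separately, with the sign convention in the footnote to \Cref{shrunken}, $y\in C_a$ translates to $k(a,y)=0$ when $a\in\Phi^+$ and to $k(a,y)=-1$ when $a\in\Phi^-$. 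Combining, $\alpha\in\Phi_x$ is equivalent to $v_x\alpha\in\Phi^+$ and $k(v_x\alpha,x)=0$; the alternative $v_x\alpha\in\Phi^-$, $k(v_x\alpha,x)=-1$ is incompatible with the bound $k\geq 0$. Consequently,
\[
\Phi^+\setminus\Phi_x \;=\; \{\alpha\in\Phi^+ : v_x\alpha\in\Phi^-\} \,\cup\, \{\alpha\in\Phi^+ : v_x\alpha\in\Phi^+,\ k(v_x\alpha,x)\geq 1\}.
\]

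Radicality is immediate from $\Phi^+\setminus\Phi_x \subseteq \Phi^+$. For closedness, given $\alpha,\beta\in\Phi^+\setminus\Phi_x$ with $\alpha+\beta\in\Phi$ (whence $\alpha+\beta\in\Phi^+$), I would split on the sign of $v_x(\alpha+\beta)$. The case $v_x(\alpha+\beta)\in\Phi^-$ is immediate from the description above, and the case $v_x\alpha,v_x\beta\in\Phi^-$ is impossible since the sum would then be negative. In the remaining case, at least one of $v_x\alpha,v_x\beta$ lies in $\Phi^+$ and contributes $k(\cdot,x)\geq 1$ by the description of $\Phi^+\setminus\Phi_x$, while the other contributes $k(\cdot,x)\geq 0$ by the dominant-chamber bound; the subadditivity estimate $k(v_x(\alpha+\beta),x) \geq k(v_x\alpha,x) + k(v_x\beta,x)$ then closes the argument.

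The main (and essentially the only) point calling for attention is the extension of the subadditivity in \Cref{kprop}(2) from alcoves of the form $t^\mu w$ to a general alcove $vt^\mu w$. I would dispose of this either by the $W_0$-equivariance $k(a,vy) = k(v^{-1}a, y)$ reducing to the stated case, or by the same four-case sign analysis showing $\delta_{u(a+b)} - \delta_{ua} - \delta_{ub} \in \{0,1\}$ for $u = w^{-1}v^{-1}$. Once this is in hand, the combinatorial case-split is routine.
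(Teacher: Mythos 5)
Your proposal is correct and follows essentially the same route as the paper's proof: both rest on the characterization $\alpha\in\Phi_x\iff v_x\alpha\in\Phi^+$ and $k(\alpha,t^{\mu_x}w_x)=0$, the bound $k(\gamma,t^{\mu_x}w_x)\ge 0$ for $\gamma\in\Phi^+$, and \Cref{kprop}(2). The only cosmetic difference is that you argue closedness of $\Phi^+\setminus\Phi_x$ directly (after reducing to the dominant alcove by $W_0$-equivariance of $k$), whereas the paper proves the equivalent contrapositive ``anti-closedness'' of $\Phi_x$.
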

\begin{proof}
It is enough to prove that $\Psi_x$ satisfies the following:
\[\text{for }\alpha,\beta\in\Phi^+\text{, if }\alpha+\beta\in \Phi_x\text{ then }\alpha\in\Phi_x\text{ or }\beta\in\Phi_x.\]
However, for a positive root $\gamma$, $\gamma\in\Psi_x$ is equivalent to $k(\gamma,t^{\mu_x}w_x)=k(v_x\gamma,\id)=0$. Applied to $\gamma=\alpha+\beta$, we get $k(\alpha+\beta,t^{\mu_x}w_x)=k(v_x\alpha+v_x\beta,\id)=0$. Using \Cref{kprop} (2) and that $t^{\mu_x}w_x$ lies in the dominant Weyl chamber, we get $k(\alpha,t^{\mu_x}w_x)=k(\beta,t^{\mu_x}w_x)=0$. Moreover, by \Cref{kprop} (2) again, $0=\delta_{v_x\alpha}+\delta_{v_x\beta}+1$ or $0=\delta_{v_x\alpha}+\delta_{v_x\beta}$. Hence, $\delta_{v_x\alpha}=0$ or $\delta_{v_x\beta}=0$, that is, $\alpha\in \Psi_x$ or $\beta\in\Psi_x$.
\end{proof}

\subsection{The set $W_x$}

We define $W_x$ mentioned in \Cref{mainconj}.

\begin{defn}\label{wxdefn}
Given $x\in\extw$, the subset $W_x\subset W_0$ is the set of $r\in W_0$ such that\[r(\Phi^+\setminus\Phi_x)\subset\Phi^+,\]or equivalently, by taking the negation and the complement, $r^{-1}\Phi^+\subset \Phi^+\cup -\Phi_x$.
\end{defn}
\begin{rem}
If $x$ lies in a shrunken Weyl chamber then $\Phi_x=\emptyset$, so $W_x=\{\id_{W_0}\}$. For $x$ lying in exactly one critical strip, we have $\Phi_x=\{\alpha_x\}$ for some unique $\alpha_x\in\Phi^+$. In fact, it should be a simple root by \Cref{mainone}. Hence,\[W_x=\{r\in W_0: r^{-1}\Phi^+\subset\Phi^+\cup \{-\alpha_x\}\}=\{id,s_x\}\]where $s_x$ is the simple reflection corresponding to $\alpha_x$.
\end{rem}

We note that $W_x$ is not necessarily a subgroup, but the following lemma suggests some structure on $W_x$.
\begin{lem}\label{wxleftclosed}
    The set $W_x$ is left-closed in the sense that if $w\in W_x$ and $\ell(sw)<\ell(w)$ for a simple reflection $s$, then $sw\in W_x$.
\end{lem}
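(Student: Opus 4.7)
The plan is to do a direct one-step check using two standard facts about the root system and the Coxeter group $(W_0,\bbS)$: first, a simple reflection $s\in\bbS$ with associated simple root $\alpha_s$ permutes $\Phi^+\setminus\{\alpha_s\}$ and sends $\alpha_s$ to $-\alpha_s$; second, the length-descent condition $\ell(sw)<\ell(w)$ is equivalent to $w^{-1}\alpha_s\in\Phi^-$. The defining condition $w\in W_x$ is exactly that $w\gamma\in\Phi^+$ for every $\gamma\in\Phi^+\setminus\Phi_x$, and the task is to verify the same condition with $sw$ in place of $w$.

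Fix such a $\gamma$. Since $w\in W_x$, the root $w\gamma$ is positive, and then $sw\gamma$ is positive unless $w\gamma=\alpha_s$, i.e.\ unless $\gamma=w^{-1}\alpha_s$. The length-descent hypothesis $\ell(sw)<\ell(w)$ forces $w^{-1}\alpha_s\in\Phi^-$, so this exceptional case cannot occur for $\gamma\in\Phi^+\setminus\Phi_x\subset\Phi^+$. Hence $sw(\Phi^+\setminus\Phi_x)\subset\Phi^+$, which is exactly $sw\in W_x$.

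There is no real obstacle to overcome; in particular the internal combinatorial structure of $\Phi_x$ (its anti-closedness from \Cref{mainprop1}, etc.) is irrelevant here, and one uses only the tautological inclusion $\Phi^+\setminus\Phi_x\subset\Phi^+$. The conceptual content is that $W_x$, being cut out by a one-sided positivity condition on a subset of $\Phi^+$, inherits the ``left-closed under length descent'' property automatically from the analogous property of $W_0$ relative to $\Phi^+$.
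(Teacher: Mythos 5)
Your proof is correct, and it is essentially the paper's argument: both rest on the facts that $s$ permutes $\Phi^+\setminus\{\alpha_s\}$ and that $\ell(sw)<\ell(w)$ forces $w^{-1}\alpha_s\in\Phi^-$. The only cosmetic difference is that you verify the condition $r(\Phi^+\setminus\Phi_x)\subset\Phi^+$ directly, while the paper checks the equivalent inverse form $r^{-1}\Phi^+\subset\Phi^+\cup-\Phi_x$ from \Cref{wxdefn}.
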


\begin{proof}
We know that $w$ can be written as $sw'$ where $\ell(sw')=\ell(w')+1$. Then $w'^{-1}\Phi^+=w^{-1}s(\Phi^+)=w^{-1}(\Phi^+\cup\{-\alpha_s\}\setminus\{\alpha_s\})=w^{-1}\Phi^+\cup \{-w\alpha_s\}\setminus\{w\alpha_s\}$. However, as $w^{-1}s<w^{-1}$, $-w^{-1}\alpha_s\in\Phi^+$ so that $w^{-1}\Phi^+\cup\{-w\alpha_s\}\setminus\{w\alpha_s\}\subset w^{-1}\Phi^+\cup\Phi^+\subset \Phi^+\cup-\Phi_x$ as $w\in W_x$.
\end{proof}

As promised previously, we show that $LP(x)$ from \cite{Sch} is closely related to $W_x$.

\begin{lem}\label{LPandWx}
    We have \[W_x = LP(x)^{-1} v_x,\]where $LP(x)^{-1} = \{v \in W_0:v^{-1} \in LP(x)\}$.
\end{lem}

\begin{proof}
    For $v\in W_0$, we have $v \in LP(x)$ if and only if, for all $\alpha \in \Phi^+$, we have $k(v\alpha, x) \ge k(v\alpha, \id)$. If $v_x^{-1}v\alpha$ is a negative root, we have $k(v\alpha, x) = k(v_x v_x^{-1}v\alpha, v_xt^{\mu_x}w_x) = k(v_x^{-1}v\alpha, t^{\mu_x}w_x) < 0$. However, $k(v\alpha, \id)$ is always $0$ or $-1$, which means that $k(v\alpha, x) = k(v\alpha,\id) = -1$ and so $x$ is in $CS_{v\alpha} = CS_{-v\alpha} = CS_{v_x{(-v_x^{-1}v\alpha)}}$ and, by definition, $-v_x^{-1}v\alpha\in \Phi_x$. Therefore, we have that $v\in LP(x)$ implies $v_x^{-1}v\Phi^+ \subset \Phi^+ \cup -\Phi_x$ and $(v_x^{-1}v)^{-1} = v^{-1}v_x \in W_x$. The reverse direction works in a similar way.
\end{proof}

\subsection{\Cref{suppinJ} in general}
As before, $b$ is basic. We will express $x$ as $v_xt^{\mu_x}w_x$ where $v_x\in W_0$ is the unique element such that $v_x^{-1}x$ is in the dominant Weyl chamber.

\begin{lem}\label{suppinJgen}
    If $x$ is a $(J,w)_\sigma$-alcove for a $\sigma$-stable subset $J$ of $\bbS$, then $\sigma^{-1}(r)\etasigma(x)r^{-1}\in W_J$ for some $r\in W_x$.
\end{lem}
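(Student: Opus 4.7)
The plan is to produce $r$ by refining the natural candidate $r_0 \colonequals w^{-1}v_x$. A direct computation using $\eta_\sigma(x) = \sigma^{-1}(w_x)v_x$ shows
\[
\sigma^{-1}(r_0)\,\eta_\sigma(x)\,r_0^{-1} \;=\; \sigma^{-1}\bigl(w^{-1}v_xw_x\sigma(w)\bigr),
\]
which by condition (1) of the $(J,w)_\sigma$-alcove definition and $\sigma$-stability of $J$ lies in $W_J$. Since $r_0$ need not belong to $W_x$, I will replace it by the minimal-length element in its left $W_J$-coset.

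Accordingly, let $r$ be the minimal-length element of $W_J \cdot r_0 \subset W_0$, and factor $r_0 = h\cdot r$ with $h \in W_J$. Standard coset-representative theory gives $\ell(r_0) = \ell(h) + \ell(r)$ and $r^{-1}(\Phi_J^+) \subset \Phi^+$. Because $J$ is $\sigma$-stable, left multiplication by $W_J$ preserves the $W_J$-condition:
\[
\sigma^{-1}(r)\,\eta_\sigma(x)\,r^{-1} \;=\; \sigma^{-1}(h^{-1})\cdot \sigma^{-1}(r_0)\,\eta_\sigma(x)\,r_0^{-1}\cdot h \;\in\; W_J.
\]
It remains to verify $r \in W_x$, i.e., $\operatorname{Inv}(r) \subset \Phi_x$.

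Length-additivity gives the inversion-set decomposition $\operatorname{Inv}(r_0) = \operatorname{Inv}(r) \sqcup r^{-1}\operatorname{Inv}(h)$. Using $\operatorname{Inv}(h) \subset \Phi_J^+$ together with $r^{-1}\Phi_J^+ \subset \Phi^+$, a direct argument identifies $r^{-1}\operatorname{Inv}(h) = \{\gamma \in \Phi^+ : r_0\gamma \in \Phi_J^-\}$, whence
\[
\operatorname{Inv}(r) \;=\; \{\gamma \in \Phi^+ : r_0\gamma \in \Phi^- \setminus \Phi_J^-\}.
\]
For any $\gamma$ in this set, set $\alpha \colonequals -r_0\gamma \in \Phi^+ \setminus \Phi_J^+$ and apply condition (2) of the $(J,w)_\sigma$-alcove to $a \colonequals w\alpha = -v_x\gamma$. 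Using $k(a,x) + k(-a,x) = -1$ (\cref{kprop}) and $\delta_a + \delta_{-a} = -1$, the inequality $k(w\alpha, x) \geq \delta_{w\alpha}$ rearranges to $k(v_x\gamma, x) \leq \delta_{v_x\gamma}$. Since $k(v_x\gamma, x) = k(\gamma, v_x^{-1}x) \geq 0$ (because $v_x^{-1}x$ lies in the dominant Weyl chamber and $\gamma > 0$), the case $v_x\gamma < 0$ is impossible; hence $v_x\gamma > 0$ and $k(v_x\gamma, x) = 0$, so $x \in C_{v_x\gamma}$, i.e., $\gamma \in \Phi_x$.

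The main obstacle will be the combinatorial identification $r^{-1}\operatorname{Inv}(h) = \{\gamma \in \Phi^+ : r_0\gamma \in \Phi_J^-\}$; once it is in hand, condition (2) combined with the positivity of $k$-values in the dominant chamber delivers $\operatorname{Inv}(r) \subset \Phi_x$ via a clean case split on the sign of $v_x\gamma$.
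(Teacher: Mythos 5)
Your proposal is correct, but it follows a genuinely different route from the paper's proof. The paper first uses condition (2) of \cref{jwsalcove} together with dominance of $v_x^{-1}x$ to show $v_x^{-1}w(\Phi^+\setminus\Phi_J^+)\subset\Phi^+\cup-\Phi_x$, and then gets the existence of $r\in W_x$ with $rv_x^{-1}w\in W_J$ abstractly, by feeding the radical closed set $\Phi^+\setminus\Phi_x$ (\cref{mainprop1}) and the parabolic set $v_x^{-1}w(\Phi^+\cup\Phi_J^-)$ into \cref{mainprop2}, whose proof relies on the classification of invertible subsets from \cite{Djokovic}. You instead pinpoint $r$ concretely as the minimal-length representative of the coset $W_J\cdot(w^{-1}v_x)$: conjugating by elements of the $\sigma$-stable $W_J$ preserves the $W_J$-membership supplied by condition (1), and the length-additive decomposition $\operatorname{Inv}(hr)=\operatorname{Inv}(r)\sqcup r^{-1}\operatorname{Inv}(h)$, combined with $\operatorname{Inv}(h)\subset\Phi_J^+$ and $r^{-1}\Phi_J^+\subset\Phi^+$, does identify $\operatorname{Inv}(r)=\{\gamma\in\Phi^+: w^{-1}v_x\gamma\in\Phi^-\setminus\Phi_J^-\}$ (the step you flagged as the main obstacle goes through exactly as you sketch); finally the condition-(2)-plus-dominance computation, which is the same local argument as the paper's first paragraph in contrapositive form, puts each such $\gamma$ in $\Phi_x$, so $r\in W_x$. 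One cosmetic point: \cref{kprop}(1) is stated only for dominant alcoves $t^\mu w$, so either remark that it holds for arbitrary alcoves or apply it to $k(\pm\gamma,t^{\mu_x}w_x)$ after translating by $v_x$. Your argument buys elementarity and explicitness — it bypasses \cref{mainprop1} and \cref{mainprop2} (hence the classification input) entirely and even yields the sharper statement that the minimal-length element of $W_Jw^{-1}v_x$ always serves as the required $r$; the paper's approach buys the general structural facts about $\Phi_x$ (anti-closedness of $\Phi_x$, i.e.\ closedness of its complement), which it reuses elsewhere, for instance in \cref{onestripsimple} and in analyzing $W_x$.
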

Similarly as before, we get the following corollary:
\begin{cor}\label{righttoleftgen}
    Let $x$ be an arbitrary element in $\extw$ and $b\in \brG$ such that $\kappa(x)=\kappa(b)$. If $\sigmasupp(\sigma^{-1}(r)\etasigma(x)r^{-1})=\bbS$ for all $r\in W_x$, then $X_x(b)\neq\emptyset$.
\end{cor}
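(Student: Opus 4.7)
The plan is to follow the shrunken-case template from the proof of \thmb~(see \cref{knownresult}), with \cref{suppinJgen} playing the role that \cref{suppinJ} plays there. In other words, I would reduce the claim to verifying NLO (\cref{NLOnonempty}), and the hypothesis on the $\sigma$-supports is exactly what is needed to force the NLO verification to happen only at $J=\bbS$, where it becomes trivial.

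Concretely, first I would show the contrapositive implication: whenever $x$ is a $(J,w)_\sigma$-alcove for a $\sigma$-stable $J\subset\bbS$, we must have $J=\bbS$. Indeed, by \cref{suppinJgen} there exists $r\in W_x$ with $\sigma^{-1}(r)\eta_\sigma(x)r^{-1}\in W_J$. Since $J$ is $\sigma$-stable, the $\sigma$-support of this element is contained in $J$; but the hypothesis of the corollary says this $\sigma$-support equals $\bbS$, forcing $J=\bbS$. Thus the only pairs $(J,w)$ for which $x$ is a $(J,w)_\sigma$-alcove are those with $J=\bbS$.

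Next I would verify condition NLO for such pairs. When $J=\bbS$, we have $\extw_J=\extw$, $M_J=G$, and $\kappa_J=\kappa_G$. Pick any $b_J\in\extw$ lifting the basic $\sigma$-conjugacy class $[b]$ (for instance a representative of the $\sigma$-straight conjugacy class in $\extw$ corresponding to $[b]$, as in \cite[3.3]{He14}); this exists because $b$ is basic. Then $\kappa(b_J)=\kappa(b)$ and $\nu_{b_J}=\nu_b$ automatically, so conditions (1) and (2) of NLO hold. For condition (3), observe that $\kappa_\bbS$ is $\sigma$-conjugation invariant and coincides with $\kappa_G$, so
\[
\kappa_\bbS(w^{-1}b_J\sigma(w))=\kappa_G(b_J)=\kappa_G(b)=\kappa_G(x)=\kappa_\bbS(w^{-1}x\sigma(w)),
\]
using the standing assumption $\kappa_G(x)=\kappa_G(b)$. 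Hence $x$ satisfies NLO and \cref{NLOnonempty} gives $X_x(b)\neq\emptyset$.

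Essentially, none of this is hard once \cref{suppinJgen} is in hand; the whole content of the corollary is the generalization of \cref{suppinJ} to $W_x$-conjugates. The main obstacle therefore is not in deducing this corollary but in proving \cref{suppinJgen} itself, which is why the set $W_x$ was designed: it captures precisely the ``directions'' in which an alcove lying in the critical strips indexed by $\Phi_x$ can be interpreted as sitting in an adjacent shrunken chamber, matching each possible $(J,w)_\sigma$-alcove structure to one of the conjugates $\sigma^{-1}(r)\eta_\sigma(x)r^{-1}$.
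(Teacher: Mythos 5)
Your proposal is correct and follows essentially the same route as the paper: the paper deduces the corollary "similarly as before" by combining \cref{suppinJgen} with the $\sigma$-stability of $J$ to force $J=\bbS$, and then verifies NLO for $J=\bbS$ via a $\sigma$-straight representative of $[b]$ exactly as in the shrunken case, concluding with \cref{NLOnonempty}. No substantive difference from the paper's argument.
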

\begin{proof}[Proof of \Cref{suppinJgen}]
Suppose that there exists $\alpha\in w(\Phi^+\setminus\Phi_J^+)$ such that $v_x^{-1}\alpha\in -(\Phi^+\setminus \Phi_x)$. Then, by definition of $\Phi_x$, $-\alpha$ satisfies $k(-\alpha,x)\neq k(-\alpha,\id)$, that is, $k(\alpha,x)\neq k(\alpha,\id)$. Hence, $k(v_x^{-1}\alpha,t^{\mu_x}w_x)\gneq k(\alpha,\id)\ge-1$ and so $v_x^{-1}\alpha$ should be a positive root as $t^{\mu_x}w_x$ is in the dominant Weyl chamber. This contradicts to the assumption. Hence, $v_x^{-1}\alpha\in \Phi^+\cup -\Phi_x$.

This implies that $v_x^{-1}w(\Phi^+\setminus\Phi_J^+)\subset \Phi^+\cup -\Phi_x$, or equivalently, $\Phi^+\setminus\Phi_x\subset v_x^{-1}w(\Phi^+\cup\Phi_J^-)$. By \Cref{mainprop1}, $\Psi_r\colonequals\Phi^+\setminus \Phi_x$ and $\Psi_p\colonequals v_x^{-1}w(\Phi^+\cup\Phi_J^-)$ fit into the assumptions in \Cref{mainprop2} so that there exists $r\in W_0$ such that\[r(\Phi^+\setminus\Phi_x)\subset \Phi^+\subset rv_x^{-1}w(\Phi^+\cup \Phi_J^-).\]
Taking the complement and the negation, we get $rv_x^{-1}w(\Phi^+\setminus \Phi_J^+)\subset \Phi^+$ for some $r\in W_x$ by \Cref{wxdefn}. Therefore, $rv_x^{-1}w\in W_J$ for some $r\in W_x$ and consequently $\sigma(w^{-1})\sigma(v_x)\sigma(r^{-1})\in W_J$ as $J$ is $\sigma$-stable. Moreover, $w^{-1}v_xw_x\sigma(w)\in W_J$ since $x$ is a $(J,w)_\sigma$-alcove. Multiplying them together, we get $rw_x\sigma(v_x)\sigma(r^{-1})\in W_J$ so that $\sigma^{-1}(r)\etasigma(x) r^{-1}\in W_J$.
\end{proof}

\subsection{\Cref{nonemptyS} under some restriction}
For $r\in W_x$, let us denote by $J_{r,x}$ the $\sigma$-stable subset $\sigmasupp(\sigma^{-1}(r)\etasigma(x)r^{-1})$.
\begin{lem}\label{jrxalcove}
    $x$ is a $(J_{r,x},v_xr^{-1})_\sigma$-alcove.
\end{lem}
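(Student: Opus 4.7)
The plan is to verify directly the two defining conditions of $(J,w)_\sigma$-alcoves from \Cref{jwsalcove}, with $J = J_{r,x}$ and $w = v_x r^{-1}$. For condition~(1), setting $\eta = \sigma^{-1}(r)\etasigma(x)r^{-1} = \sigma^{-1}(rw_x)v_xr^{-1}$, a direct multiplication unravels
\[
w^{-1}x\sigma(w) = rv_x^{-1}\cdot v_xt^{\mu_x}w_x\cdot\sigma(v_xr^{-1}) = t^{r\cdot\mu_x}\,\sigma(\eta).
\]
By the very definition $J_{r,x}=\sigmasupp(\eta)$ we have $\eta\in W_{J_{r,x}}$, and $\sigma$-stability of $J_{r,x}$ then gives $\sigma(\eta)\in W_{J_{r,x}}$, so $w^{-1}x\sigma(w)\in\extw_{J_{r,x}}$.

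For condition~(2) I will in fact prove the stronger statement that $k(v_xr^{-1}\beta,x)\ge k(v_xr^{-1}\beta,\id)$ holds for every $\beta\in\Phi^+$, not only for $\beta\in\Phi^+\setminus\Phi_{J_{r,x}}^+$. Writing $\alpha = r^{-1}\beta$ and $a = v_x\alpha$, I split on the sign of $\alpha$. If $\alpha>0$, then the formula of \Cref{klem} identifies $k(a,x)=\langle\alpha,\mu_x\rangle+\delta_{w_x^{-1}\alpha}$ with $k(\alpha,t^{\mu_x}w_x)$, which is $\ge 0$ because $v_x^{-1}x=t^{\mu_x}w_x$ lies in the dominant Weyl chamber; since $k(a,\id)=\delta_{v_x\alpha}\le 0$ unconditionally, the inequality holds.

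If $\alpha<0$, I invoke the equivalent formulation of $r\in W_x$ as $r^{-1}\Phi^+\subset\Phi^+\cup(-\Phi_x)$ to conclude $\alpha\in-\Phi_x$. Writing $\alpha' = -\alpha\in\Phi_x$, the defining properties of $\Phi_x$ give $v_x\alpha'>0$ and $k(v_x\alpha',x)=0$; combined with the identity $k(-c,y) = -1 - k(c,y)$, which is valid for any $y\in\extw$ as an immediate consequence of the explicit formula for $k$, both $k(a,x)$ and $k(a,\id)$ equal $-1$, so equality holds.

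The only mildly subtle point is conceptual rather than computational: condition~(2) needs no input about $J$ at all once $r\in W_x$ is fixed, and the dominance of $t^{\mu_x}w_x$ is precisely what rules out the ``problem case'' $\alpha>0,\ w_x^{-1}\alpha<0,\ \langle\alpha,\mu_x\rangle=0$ that would otherwise violate the inequality in the case $\alpha>0$. The role of $J = J_{r,x}$ is therefore confined to condition~(1), and the main work reduces to the bookkeeping sketched above.
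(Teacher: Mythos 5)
Your proof is correct and follows essentially the same route as the paper: condition (1) is checked by computing that the finite part of $w^{-1}x\sigma(w)$ is $\sigma(\sigma^{-1}(r)\etasigma(x)r^{-1})\in W_{J_{r,x}}$, and condition (2) is checked by splitting on the sign of $r^{-1}\beta$, using dominance of $t^{\mu_x}w_x$ in the positive case and $r\in W_x$ together with the definition of $\Phi_x$ (equal $k$-values) in the negative case. Your observation that the inequality in (2) holds for all $\beta\in\Phi^+$, independent of $J$, is a harmless strengthening already implicit in the paper's argument.
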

\begin{proof}
    The first condition of \Cref{jwsalcove} can be checked easily because the finite part is $rv_x^{-1}v_xw_x\sigma(v_xr^{-1})\in W_{J_{x,r}}$. For \Cref{jwsalcove} (2), we need to compare $k(v_xr^{-1}\alpha,v_xt^{\mu_x}w_x)$ and $k(v_xr^{-1}\alpha,\id)$ for all $\alpha\in\Phi^+\setminus\Phi_{J_{x,r}}^+$. The first one is $k(r^{-1}\alpha,t^{\mu_x}w_x)$ which is $\ge0$ if $r^{-1}\alpha\in\Phi^+$ and $0\ge k(v_xr^{-1}\alpha,\id)$ always. Hence, we only need to consider the case $r^{-1}\alpha\in\Phi^-$. However, $r(\Phi^+\setminus \Phi_x)\subset \Phi^+$ implies (again by the negation complement) $r^{-1}\Phi^+\subset \Phi^+\cup -\Phi_x$. Therefore, we have $r^{-1}\alpha\in -\Phi_x$. By the definition of $\Phi_x$, we have the same $k$-values.
\end{proof}

\begin{prop}\label{lefttorightgen}
    Let $x\in\extw$ be an element with $\ell(x)\gg0$. If $J_{r,x}\neq\bbS$ for some $r\in W_x$, then $X_x(b)=\emptyset$.
\end{prop}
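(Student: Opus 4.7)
The plan is to extend the necessity half of GHN's argument (\cref{nonemptyS} and the ``only if'' of \thmb) from the shrunken-Weyl-chamber setting to the general one by using \cref{jrxalcove} in place of the automatic $(\sigmasupp(\etasigma(x)),\id)_\sigma$-alcove structure exploited there. Suppose for contradiction that $X_x(b)\neq\emptyset$ and fix $r\in W_x$ with $J:=J_{r,x}\subsetneq\bbS$. Setting $w:=v_xr^{-1}$, \cref{jrxalcove} gives that $x$ is a $(J,w)_\sigma$-alcove; in particular $x_J:=w^{-1}x\sigma(w)\in\extw_J$.

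First I would invoke the ``$\sigma$-conjugacy classes do not fuse'' mechanism of GHN Proposition 3.5.1 in its basic form (cf.\ the erratum \cite{ghnerror}): nonemptiness of $X_x(b)$ together with the $(J,w)_\sigma$-alcove structure produces a representative $b_J$ of $[b]$ inside $w\extw_J\sigma(w)^{-1}$ whose Newton datum matches that of $x_J$ up to the action of $M_J$'s centralizer. Since $b$ is basic and $G$ is simple, quasi-split, adjoint, $\bar\nu_b=0\in V$. Applying \cref{GHN351} to $x_J\in\extw_J$ and transporting the resulting identity back through the $w$-conjugation, one obtains $\bar\nu_{v_x\cdot\mu_x}\in Q_{J',\bbQ}^\vee$ for a suitable $\sigma$-stable conjugate $J'\subsetneq\bbS$. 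As $\mu_x$ is dominant, so is the dominant $W_0$-representative of $\bar\nu_{v_x\cdot\mu_x}$; by \cref{wronglem}, the properness of $J'$ then forces this Newton point to vanish, i.e.\ $\mu_x$ is torsion in $X_*(T)_{\Gamma_0}$.

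The concluding step is a length bound: a torsion $\mu_x$ gives $t^{\mu_x}\in\Omega$ and hence $\ell(t^{\mu_x})=0$, so $\ell(x)=\ell(v_xt^{\mu_x}w_x)\leq \ell(v_x)+\ell(w_x)\leq 2\ell(w_0)$, a bound depending only on $G$. Hence for $\ell(x)>2\ell(w_0)$ the scenario above cannot occur, yielding the desired contradiction. The main obstacle, in my view, lies in the second step: cleanly tracking Newton data through the conjugation by $w=v_xr^{-1}$ in order to convert the $\extw_J$-level identity produced by \cref{GHN351} into a statement about $\bar\nu_{\mu_x}$ itself. This is precisely where the defining property of $W_x$, namely $r(\Phi^+\setminus\Phi_x)\subset\Phi^+$, is essential, since it guarantees that the conjugation by $w$ preserves the dominance and coroot-lattice bookkeeping that underlies the use of \cref{wronglem}.
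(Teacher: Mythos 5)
Your route diverges from the paper's, and it has a gap precisely where you flag a concern. After \cref{jrxalcove}, the alcove structure (combined with $X_x(b)\neq\emptyset$ and $b$ basic) yields via GHN Proposition~3.6.4 a constraint of the form $\nu_{r\cdot\mu_x}\in Q_{J_{r,x},\bbQ}^\vee$, i.e.\ a constraint on the $\sigma$-average of $w^{-1}v_x\cdot\mu_x=r\cdot\mu_x$, not on $\nu_{\mu_x}$ or $\nu_{v_x\cdot\mu_x}$. Your second step asserts that this can be transported to a membership $\nu_{v_x\cdot\mu_x}\in Q_{J',\bbQ}^\vee$ for some proper $\sigma$-stable $J'$; but the correction term $\nu_{\mu_x}-\nu_{r\cdot\mu_x}$ lies in $Q^\vee_{\sigmasupp(r),\bbQ}$ by \cref{wmu-mu}, and nothing prevents $J_{r,x}\cup\sigmasupp(r)$ from being all of $\bbS$. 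Moreover \cref{wronglem} requires the cocharacter to be dominant, and the quantity you actually control, $\nu_{r\cdot\mu_x}$, is generally not dominant, so \cref{wronglem} cannot be applied to it; passing to a dominant $W_0$-representative, as you suggest, destroys the membership in $Q_{J',\bbQ}^\vee$, since $W_0$ does not preserve $Q_{J'}^\vee$.

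The paper's argument sidesteps all this by being quantitative. It expands $r\cdot\mu_x=\mu_x-\sum_j\langle s_{i_1}\cdots s_{i_{j-1}}\alpha_{i_j},\mu_x\rangle\alpha_{i_j}^\vee$ along a reduced word for $r$ and observes that each root $s_{i_1}\cdots s_{i_{j-1}}\alpha_{i_j}$ lies in $\Phi^+\cap r^{-1}\Phi^-\subset\Phi_x$ \emph{precisely because $r\in W_x$}, so each pairing is $0$ or $1$; thus the perturbation of $\mu_x$ is bounded independently of $x$. For $\ell(x)\gg0$ the coweight $\mu_x$ has, by \cref{oldpos}, arbitrarily large strictly positive coroot coefficients, so $\nu_{r\cdot\mu_x}$ still has all coroot coefficients strictly positive, which is incompatible with $\nu_{r\cdot\mu_x}\in Q_{J_{r,x},\bbQ}^\vee$ for proper $J_{r,x}$. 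Thus the role of $r\in W_x$ is purely numerical (it pins the coefficients in \eqref{rmu-mu} to $\{0,1\}$), not a dominance-preserving device as your final sentence hopes, and $\ell(x)\gg0$ is what makes the strict positivity win. Your concluding length bound is fine as a last step, but it only helps once the earlier steps produce a genuine vanishing statement, which they do not.
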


\begin{proof}
    By the previous lemma, $x$ is a $(J_{r,x},vr^{-1})_\sigma$-alcove so \cite[Proposition 3.6.4]{GHN} tells us that\[\nu_{r\cdot\mu_x}\in Q_{J_{r,x},\bbQ}^\vee.\]Note that, inductively computing, when $r=s_{i_m}\cdots s_{i_1}$,
    \begin{equation}\label{rmu-mu}
        r\cdot\mu_x=\mu_x-\sum_{j=1}^k\langle s_{i_1}\cdots s_{i_{j-1}}\alpha_{i_j},\mu_x\rangle\alpha_{i_j}^\vee
    \end{equation}so that\[\nu_{r\cdot\mu_x}=\nu_{\mu_x}-\nu_{\langle \alpha_1,\mu_x\rangle\alpha_1^\vee+\cdots+\langle s_1\cdots s_{k-1}\alpha_k,\mu_x\rangle\alpha_k^\vee}.\]Assume that the expression of $r$ is reduced, then $s_{i_1}\cdots s_{i_{j-1}}\alpha_{i_j}\in\Phi^+$. Moreover, $rs_{i_1}\cdots s_{i_{j-1}}\alpha_{i_j}\in\Phi^-$. Hence, $s_{i_1}\cdots s_{i_{j-1}}\alpha_{i_j}\in \Phi^+\cap r^{-1}\Phi^-\subset\Phi_x$ implying that $\langle s_{i_1}\cdots s_{i_{j-1}}\alpha_{i_j},\mu_x\rangle=0$ or $1$. Therefore, the $\sigma$-average of the sum of coroots on the right hand side is bounded. However, for $x$ such that $\ell(x)\gg0$, if $\mu_x=\sum_ic_i\varpi_i^\vee$, then $\sum_ic_i\gg0$ and, by \Cref{oldpos}, the coefficients in the linear combination of coroots are large enough so that $\nu_{\mu_x}-\nu_{\sum_{j=1}^k\langle s_{i_1}\cdots s_{i_{j-1}}\alpha_{i_j},\mu_x\rangle\alpha_{i_j}^\vee}$ has all positive coefficients. This is a contradiction.
\end{proof}
Combining \Cref{lefttorightgen} and \Cref{righttoleftgen}, we get
\begin{thm}\label{maintwo}Let $b\in B(G)$ be basic and suppose that $x\in \extw$ satisfies $\kappa(b)=\kappa(x)$ and $\ell(x)\gg0$. Then\begin{center}$X_x(b)\neq\emptyset$ if and only if $\sigmasupp(\sigma^{-1}(r)\eta_\sigma(x)r^{-1})=\mathbb{S}$ for all $r\in W_x$.
\end{center}
\end{thm}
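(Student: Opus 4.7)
The plan is essentially to assemble Theorem \ref{maintwo} directly from the two directional results established immediately before it: Corollary \ref{righttoleftgen} delivers the $\Leftarrow$ implication in full generality, and Proposition \ref{lefttorightgen} delivers the $\Rightarrow$ implication under the length hypothesis $\ell(x)\gg 0$. So no new argument is really required beyond verifying that the two statements fit together as logical negations over the quantifier ``for all $r\in W_x$''. Concretely, the full-$\sigma$-support condition for every $r\in W_x$ fails exactly when there exists an $r\in W_x$ with $J_{r,x}:=\sigmasupp(\sigma^{-1}(r)\eta_\sigma(x)r^{-1})$ a proper $\sigma$-stable subset of $\mathbb{S}$, so the two propositions partition the possible $x$'s (for $\ell(x)\gg 0$) into the nonempty and empty cases.

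For the $\Leftarrow$ direction, I would simply invoke Corollary \ref{righttoleftgen}: its hypothesis is exactly $\kappa(x)=\kappa(b)$ together with $\sigmasupp(\sigma^{-1}(r)\eta_\sigma(x)r^{-1})=\mathbb{S}$ for all $r\in W_x$, and it concludes $X_x(b)\neq\emptyset$. This half uses no length restriction at all, so it applies to every $x$ satisfying the Kottwitz compatibility.

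For the $\Rightarrow$ direction I would argue by contrapositive. Assume there is some $r\in W_x$ with $J_{r,x}\subsetneq\mathbb{S}$. By Lemma \ref{jrxalcove}, $x$ is automatically a $(J_{r,x},v_xr^{-1})_\sigma$-alcove (this is where the set $W_x$ was tailor-made), and then Proposition \ref{lefttorightgen} applies and forces $X_x(b)=\emptyset$. The assembly is clean because Lemma \ref{jrxalcove} produces a $(J,w)_\sigma$-alcove structure \emph{with the same $J$} whose $\sigma$-support witnesses the failure, which is precisely what Proposition \ref{lefttorightgen} needs as input.

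The main obstacle — and the sole reason the theorem carries ``$\ell(x)\gg 0$'' rather than being unconditional — is the coroot-positivity step inside Proposition \ref{lefttorightgen}. There one needs the coefficients of $\mu_x$ in the fundamental-coweight basis to be large enough that, after subtracting the bounded correction $\sum_j\langle s_{i_1}\cdots s_{i_{j-1}}\alpha_{i_j},\mu_x\rangle\alpha_{i_j}^\vee$ coming from \eqref{rmu-mu}, the residue still has strictly positive coroot coefficients via \Cref{oldpos}, contradicting membership in $Q_{J_{r,x},\mathbb{Q}}^\vee$ through \Cref{wronglem}. Making that bound effective for each root system is exactly what later subsections (such as \Cref{Anresult}) must do; for the present theorem it is enough to invoke it qualitatively, which is why the statement admits the clean form stated.
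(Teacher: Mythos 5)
Your proposal is correct and takes essentially the same approach as the paper: the paper's proof of \cref{maintwo} is literally the single sentence ``Combining \cref{lefttorightgen} and \cref{righttoleftgen}, we get,'' and your write-up faithfully unpacks exactly that assembly (the $\Leftarrow$ direction from \cref{righttoleftgen}, the $\Rightarrow$ direction by contrapositive from \cref{lefttorightgen}, with the $\ell(x)\gg0$ hypothesis feeding only the latter). The one tiny imprecision is that you present \cref{jrxalcove} as a step one must invoke before applying \cref{lefttorightgen}; in fact that lemma is used \emph{inside} the proof of \cref{lefttorightgen}, whose stated hypothesis is already just $J_{r,x}\neq\mathbb{S}$ for some $r\in W_x$, so no extra step is needed at the level of the theorem assembly.
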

\begin{proof}[Proof of \Cref{mainthm}]
Note that $\widetilde{\sigmasupp}(x)=\widetilde{\bbS}$ condition in \Cref{mainthm} (1) is satisfied for $x$ such that $\ell(x)\gg0$. As there are only finitely many $x$ with $\ell(x)$ bounded by some number, \Cref{mainconj} holds.\end{proof}
\begin{proof}[Proof of \Cref{mainthm} (2)]\label{123proof}
If $x=v_xt^{\mu_x}w_x$ is a translation element, we have $w_x=v_x^{-1}$. In this case, $\Phi_x$ consists of $\alpha\in\Phi^+$ such that $0\le\langle\alpha,\mu_x\rangle+\delta_{v_x\alpha}=\delta_{v_x\alpha}$ so that $\langle\alpha,\mu_x\rangle=0$ and $v_x\alpha\in \Phi^+$ for all $\alpha\in\Phi_x$. For $x=v_xt^{\mu_x}$, we have $w_x=\id$ so that $\alpha\in\Phi_x$ satisfies $\langle\alpha,\mu_x\rangle+\delta_{\alpha}=\delta_{v_x\alpha}$ so that $\langle\alpha,\mu_x\rangle=0$ and $v_x\alpha\in\Phi^+$ as well.

Hence, in both cases, we do not have $\alpha\in\Phi_x$ such that $\langle\alpha,\mu_x\rangle=1$. Therefore, in the proof of \Cref{lefttorightgen}, all the terms $\langle s_{i_1}\cdots s_{i_{j-1}}\alpha_{i_j},\mu_x\rangle\alpha_{i_j}^\vee$ are zeros. Therefore, $\nu_{r\cdot\mu_x}$ is dominant still so that $J_{r,x}\neq\bbS$ is a contradiction assuming $X_x(b)\neq\emptyset$.
\end{proof}

\begin{proof}[Proof of \Cref{thma}]By \Cref{suppinJgen} and \Cref{jrxalcove}, $x$ is a $(J,w)_\sigma$-alcove if and only if $J=J_{r,x}$. \Cref{maintwo} implies that, for $\ell(x)\gg0$, $X_x(b)\neq\emptyset$ if and only if $J_{r,x}=\bbS$ for all $r\in W_x$.\end{proof}


\section{One critical strip}
\subsection{Finding a simple root}
We will prove the following:

\begin{thm}\label{mainone}Let $b\in B(G)$ be basic and $x\in \extw$ lie in exactly one critical strip $CS_a$ for some $a\in\Phi$. Then
\begin{enumerate}
    \item $v_x^{-1}a$ is a simple root, and
    \item $X_x(b)\neq\emptyset$ if and only if $\kappa_G(b)=\kappa_G(x)$ and \begin{center}both $\sigmasupp(\etasigma(x))$ and $\sigmasupp(\sigma^{-1}(s_x)\etasigma(x)s_x)$ are $\mathbb{S}$,\end{center}where $s_x$ is the simple reflection corresponding to $v_x^{-1}a$.
\end{enumerate}
\end{thm}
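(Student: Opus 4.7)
The plan is to use the hypothesis to pin down both $\Phi_x$ and $W_x$, invoke the general machinery already established, and then clean up the finitely many small-length exceptional cases via a refined coefficient analysis. The hypothesis that $x$ lies in exactly one critical strip $C_a$ translates directly to $\Phi_x = \{\alpha\}$ with $\alpha \colonequals v_x^{-1}a \in \Phi^+$. Applying \Cref{mainprop1} to this singleton gives that $\Phi^+ \setminus \{\alpha\}$ is closed. Any decomposition $\alpha = \beta + \gamma$ with $\beta, \gamma \in \Phi^+$ would force $\beta, \gamma \neq \alpha$ (since $\beta = \alpha - \gamma \neq \alpha$), so $\beta, \gamma \in \Phi^+ \setminus \{\alpha\}$, contradicting closedness. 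Hence $\alpha$ is simple, which proves the first assertion and identifies $s_x = s_\alpha$. A direct check against \cref{wxdefn} then yields $W_x = \{\id, s_\alpha\}$: any $r \in W_0$ other than $\id$ and $s_\alpha$ sends some positive root different from $\alpha$ to a negative root.

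With $W_x = \{\id, s_x\}$ pinned down, the stated condition that both $\sigmasupp(\eta_\sigma(x))$ and $\sigmasupp(\sigma^{-1}(s_x)\eta_\sigma(x)s_x)$ equal $\bbS$ reads precisely as $\sigmasupp(\sigma^{-1}(r)\eta_\sigma(x)r^{-1}) = \bbS$ for every $r \in W_x$. The $\Leftarrow$ direction then follows immediately from \cref{righttoleftgen}, for arbitrary $x$ satisfying $\kappa_G(b) = \kappa_G(x)$. The $\Rightarrow$ direction for $x$ with $\ell(x) \gg 0$ is precisely \Cref{maintwo}, leaving only the finitely many small-length $x$ lying in exactly one critical strip to treat separately.

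For the remaining exceptional $x$, the idea is to sharpen the argument of \cref{lefttorightgen} using the exact positive coefficients supplied by \cref{oldpos}. Assume toward contradiction that $X_x(b) \neq \emptyset$ together with $J_{r,x} \subsetneq \bbS$ for some $r \in \{\id, s_x\}$. By \cref{jrxalcove} and \cite[Proposition~3.6.4]{GHN}, $\nu_{r \cdot \mu_x} \in Q^\vee_{J_{r,x}, \bbQ}$. For $r = \id$, dominance of $\nu_{\mu_x}$ combined with \cref{wronglem} forces $\nu_{\mu_x} = 0$. For $r = s_x$, simpleness of $\alpha$ together with the corollary describing alcoves in $C_{v_x\alpha}$ gives $s_x \cdot \mu_x = \mu_x - \langle \alpha, \mu_x \rangle \alpha^\vee$ with $\langle \alpha, \mu_x \rangle \in \{0, 1\}$; the value $0$ reduces to the previous case, while the value $1$ combined with $\nu_{\mu_x} = 0$ yields $\nu_{s_x \cdot \mu_x} = -\nu_{\alpha^\vee}$. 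The main obstacle is exactly this degenerate regime, where the usual \emph{large coefficients} argument fails; one closes the gap by invoking the positivity of $(\pi_{ij})$ from \cref{oldpos} together with a finite type-by-type check (along the lines outlined in \S4) to rule out the remaining configurations.
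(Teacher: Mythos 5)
The first half of your proposal is fine and agrees with the paper: deducing from \cref{mainprop1} that the unique element of $\Phi_x$ cannot be a sum of two positive roots and hence is simple, identifying $W_x=\{\id,s_x\}$, and getting $\Leftarrow$ from \cref{righttoleftgen}. The gap is in the $\Rightarrow$ direction, which is where all the content of \cref{mainone} lies. The paper does \emph{not} prove it by invoking \cref{maintwo} for $\ell(x)\gg0$ and "treating the finitely many small-length $x$ separately": the bound in \cref{maintwo} is not effective as stated, so you have no list of remaining elements to check, and the exceptional cases in the paper's argument are in any case finitely many Dynkin-diagram vertex configurations (each covering infinitely many $x$), not finitely many alcoves. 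Instead the paper gives a uniform argument valid for every one-strip $x$ regardless of length: \cref{nonemptyetaS} handles $r=\id$, and \cref{nonemptysetaS} handles $r=s_x$ by pairing $\nu_{s_x\cdot\mu_x}\in Q^\vee_{J_x,\bbQ}$ with $\varpi_\calO$ and exploiting positivity of the inverse Cartan matrix, with the few vertices where $\langle\varpi_x,\varpi_x^\vee\rangle\le1$ dealt with by the detailed analysis of $v_x$ and $w_x$ in \S5.

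Your sketch of the "sharpened coefficient analysis" also has concrete defects. For $r=\id$ with $J_{\id,x}\neq\bbS$ you stop at $\nu_{\mu_x}=0$, which is not a contradiction by itself; the missing step is exactly the point of \cref{nonemptyetaS}: centrality of $\nu_{\mu_x}$ forces $\mu_x$ central, hence $w_x=\id$ and $\etasigma(x)=v_x$, and the one-strip hypothesis then forces $\ell(v_x)=|\Phi^+|-1$, contradicting $\sigmasupp(v_x)\subsetneq\bbS$. For $r=s_x$ you import "$\nu_{\mu_x}=0$" from the other case, which is not available when only $J_{s_x,x}\neq\bbS$; worse, if $\nu_{\mu_x}=0$ then $\mu_x$ is central and $\langle\alpha,\mu_x\rangle=0$, so the "degenerate regime" you describe ($\langle\alpha,\mu_x\rangle=1$ together with $\nu_{\mu_x}=0$) is vacuous. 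The genuine hard case is $s_x\cdot\mu_x=\mu_x-\alpha_x^\vee$ with $\mu_x$ arbitrary and non-central, and ruling it out --- the $\varpi_\calO$ computation, the use of \cref{wronglem} in place of the flawed step of the cited reference, and the case-by-case study (Cases 1-1, 1-2, 1-3, 2) of $v_x$, $w_x$ and the orbit $\calO$ --- is the actual proof; "a finite type-by-type check along the lines outlined in \S4" names this work but does not supply it.
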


This is slightly stronger than \Cref{mainthm} (1). More precisely, compared to \Cref{mainthm} (1), \Cref{mainone} has additional claims that $v_x^{-1}a$ is a simple root and has no restriction `$\widetilde{\sigmasupp}(x)=\widetilde{\bbS}$'. In fact, one can easily show that $v_x^{-1}a$ must be a simple root (\Cref{onestripsimple}) from \Cref{mainprop1}. The absence of the restriction $\widetilde{\sigmasupp}(x)=\widetilde{\bbS}$ is the main reason why \Cref{mainone} is stronger. It is explained in \Cref{nucentral}.

\begin{lem}\label{onestripsimple}
Suppose that $x$ belongs to exactly one critical strip associated to $a$. Then, $v_x^{-1}a$ is a simple root.
\end{lem}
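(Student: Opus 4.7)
The plan is to deduce this directly from \Cref{mainprop1}, which asserts that $\Phi^+\setminus\Phi_x$ is a radical closed subset of $\Phi$. Since $C_a=C_{-a}$ (the two boundary hyperplanes $H_a(0),H_a(1)$ coincide with $H_{-a}(0),H_{-a}(-1)$), we may normalize so that $a\in\Phi^+$. First I would identify $\Phi_x$ explicitly: by the corollary following \cref{klem}, any $\alpha\in\Phi^+$ with $v_x t^{\mu_x}w_x\in C_{v_x\alpha}$ must satisfy $v_x\alpha\in\Phi^+$. Since $x$ lies in the unique strip $C_a$, the only such positive root is $\alpha:=v_x^{-1}a$ (which is then automatically positive), and $\Phi_x=\{\alpha\}$.

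The key step is then a one-line application of closedness. By \Cref{mainprop1}, the set $\Phi^+\setminus\{\alpha\}$ is closed, meaning that whenever $\beta,\gamma\in\Phi^+\setminus\{\alpha\}$ and $\beta+\gamma\in\Phi$, we must have $\beta+\gamma\in\Phi^+\setminus\{\alpha\}$.

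Suppose, for the sake of contradiction, that $\alpha=v_x^{-1}a$ is not simple. Then $\alpha$ decomposes as a sum $\alpha=\beta+\gamma$ of two positive roots, and since both summands have height strictly smaller than the height of $\alpha$, neither equals $\alpha$. Thus $\beta,\gamma\in\Phi^+\setminus\{\alpha\}$, yet their sum $\beta+\gamma=\alpha$ lies outside $\Phi^+\setminus\{\alpha\}$. This contradicts closedness of $\Phi^+\setminus\Phi_x$, so $\alpha$ must be simple.

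I do not expect any genuine obstacle: the whole argument is essentially a restatement of the fact that \Cref{mainprop1} forces the unique excluded positive root to be indecomposable, together with the standard characterization of simple roots as indecomposable positive roots (e.g.\ \cite[Ch.VI, \S1.6]{Bourbaki}). The only mildly subtle point is the normalization $a\in\Phi^+$ and the observation $v_x^{-1}a\in\Phi^+$, both of which are handled cleanly by the corollary to \cref{klem}.
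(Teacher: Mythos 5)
Your proof is correct and takes essentially the same route as the paper: both reduce the statement to \Cref{mainprop1} (closedness of $\Phi^+\setminus\Phi_x$) after identifying $\Phi_x=\{v_x^{-1}a\}$ via the corollary to \cref{klem}. The only cosmetic difference is that you phrase the final step via indecomposability of the excluded positive root, while the paper phrases it as ``simple roots generate $\Phi^+$''; these are the same standard fact.
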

\begin{proof}
By \Cref{mainprop1}, $\Phi_x$ is a singleton such that $\Phi^+\setminus\Phi_x$ is closed. As simple roots generate $\Phi^+$, we know $\Phi_x$ should contain a simple root.\end{proof}



We will denote by $\alpha_x$ the unique positive simple root such that $x\in CS_{v_x\alpha_x}$.

\subsection{\Cref{nonemptyS} for one strip}
Here, we prove that if $\sigmasupp(\eta_\sigma(x))\neq\mathbb{S}$ or $\sigmasupp(s_x\eta_\sigma(x)\sigma(s_x))\neq\mathbb{S}$, then $X_x(b)=\emptyset$.

\begin{prop}\label{nonemptyetaS}Under the assumption of \Cref{mainone}, if $\sigmasupp(\eta_\sigma(x))\neq\mathbb{S}$ then $X_x(b)=\emptyset$.
\end{prop}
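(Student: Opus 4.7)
The plan is to deduce $X_x(b)=\emptyset$ from \cref{nonemptyS}, which concludes emptiness from (1) $\nu_{\mu_x}\neq\nu_b$ and (2) $\sigmasupp(\eta_\sigma(x))\neq\bbS$. Input (2) is precisely our hypothesis. For (1), I will use that $G$ is simple adjoint, so $X_*(T)_\bbQ^{W_0}=0$; this forces the central Newton point $\nu_b=0$ for any basic $b$. Moreover, $\mu_x$ is dominant and $\sigma$ preserves the dominant chamber, so the Newton average $\nu_{\mu_x}=\frac{1}{N}\sum_i\sigma^i(\mu_x)$ is a sum of dominant cocharacters and vanishes if and only if $\mu_x=0$. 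So the whole proof reduces to ruling out $\mu_x=0$.

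The main step will be the analysis of the exceptional case $\mu_x=0$. If this occurred, $v_x^{-1}x=w_x\in W_0$ would need to be an alcove lying in the dominant Weyl chamber; the only $W_0$-translate of the base alcove with this property is $\mathbf{a}$ itself, forcing $w_x=\id_{W_0}$. Consequently $x=v_x$ and $\eta_\sigma(x)=v_x$. By \cref{klem}, the positive roots $\beta$ whose critical strip contains $x$ are precisely those with $v_x^{-1}\beta\in\Phi^+$, so the hypothesis that $x$ lies in exactly one critical strip translates to $\ell(v_x)=|\Phi^+|-1$. In any simple finite Weyl group of rank at least two, an element $v$ with $\supp(v)\subsetneq\bbS$ lies in a proper parabolic subgroup $W_J$ with $\ell(v)\le|\Phi_J^+|$, and a case-by-case inspection shows $\max_{J\subsetneq\bbS}|\Phi_J^+|<|\Phi^+|-1$. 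Hence $\supp(v_x)=\bbS$, so $\sigmasupp(v_x)=\bbS$, contradicting the hypothesis $\sigmasupp(\eta_\sigma(x))\neq\bbS$.

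Once $\mu_x=0$ is excluded, the reduction of the first paragraph yields $\nu_{\mu_x}\neq 0=\nu_b$, so both hypotheses of \cref{nonemptyS} hold and the conclusion $X_x(b)=\emptyset$ follows. The main obstacle I anticipate is the bookkeeping in the $\mu_x=0$ analysis: correctly interpreting the dominant-chamber normalization of $v_x^{-1}x$ when the translation part is trivial, converting ``exactly one critical strip'' via \cref{klem} into the precise length identity $\ell(v_x)=|\Phi^+|-1$, and verifying the combinatorial inequality $\max_{J\subsetneq\bbS}|\Phi_J^+|<|\Phi^+|-1$ across all simple Dynkin types.
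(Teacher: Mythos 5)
Your proposal is correct and follows essentially the same route as the paper's proof: reduce via \cref{nonemptyS} to showing $\nu_{\mu_x}\neq\bar{\nu}_b$, note that a central Newton point forces $\mu_x$ central and hence $w_x=\id$ with $\eta_\sigma(x)=v_x$, use the one-critical-strip hypothesis (via \cref{klem}) to get $\ell(v_x)=|\Phi^+|-1$, and contradict this with $\ell(v_x)\le|\Phi^+_{\supp(v_x)}|<|\Phi^+|-1$. The only adjustment is that ``$\mu_x=0$'' should read ``$\mu_x$ is central'' (its image in $V$ vanishes), since $X_*(T)_{\Gamma_0}$ may have torsion; this does not affect your chamber and counting arguments.
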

\begin{proof}By \Cref{nonemptyS}, it is enough to show that $\nu_{\mu_x}\neq\bar{\nu}_b$. We follow the strategy of \cite[Proposition 3.6.5]{GHN} here.

Let $n$ be the order of $W_0\rtimes\langle\sigma\rangle$ and suppose that $\nu_{\mu_x}$ is central. Then $0=\langle\nu_{\mu_x},\beta\rangle=\frac{1}{n}\langle\mu_x+\sigma(\mu_x)+\cdots+\sigma^{n-1}(\mu_x),\beta\rangle$, but $\sigma^i(\mu_x)$ are all dominant. Hence, if $\beta$ is positive then $\langle\mu_x,\beta\rangle=0$. Consider the case $\beta$ being the maximal root, we get $\mu_x$ central. This implies that $w_x=\id$ and $x=v_xt^{\mu_x}$. Note that $\eta_\sigma(x)= v_x$.

As we assume that there is only one critical strip containing $x$, it means that the number of $\alpha\in \Phi^+$ such that $k(\alpha,v_xt^{\mu_x})= k(\alpha,\id)=0$ is $1$. But, $k(\alpha,v_xt^{\mu_x})=\delta_{v_x^{-1}\alpha}$. It means that $|\Phi^+\cap v_x \Phi^+|=|\Phi^+|-\ell(v_x)=1$ so that $\ell(v_x)=|\Phi^+|-1$. However, if $\ell(v_x)\le |\Phi_{\sigmasupp(v_x)}^+|<|\Phi^+|-1$.
\end{proof}

Now we consider the case $\sigmasupp(\sigma^{-1}(s_x)\eta_\sigma(x)s_x)\neq\mathbb{S}$.
\begin{lem}\label{xisJalcove}
Under the assumption of \Cref{mainone}, denote by $J_x$ the set $\sigmasupp(\sigma^{-1}(s_x)\eta_\sigma(x)s_x)$. Then $x$ is a $(J_x,v_xs_x)_\sigma$-alcove.\end{lem}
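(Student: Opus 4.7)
The plan is to verify the two defining conditions of a $(J_x,v_xs_x)_\sigma$-alcove (\Cref{jwsalcove}) directly, exploiting the hypothesis that $x$ lies in exactly one critical strip to rule out the only potentially problematic case in condition (2).

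For condition (1), I would compute $(v_xs_x)^{-1}x\,\sigma(v_xs_x)$ directly. Writing $x=v_xt^{\mu_x}w_x$, this equals $t^{s_x\cdot\mu_x}\cdot s_xw_x\sigma(v_x)\sigma(s_x)$. The finite part rewrites as $s_x\sigma(\etasigma(x))\sigma(s_x)=\sigma\bigl(\sigma^{-1}(s_x)\etasigma(x)s_x\bigr)$, and this lies in $W_{J_x}$ because $J_x$ is by definition $\sigma$-stable and contains $\supp(\sigma^{-1}(s_x)\etasigma(x)s_x)$. Hence the whole product sits in $\extw_{J_x}=X_*(T)_{\Gamma_0}\rtimes W_{J_x}$.

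For condition (2), I would take an arbitrary $a=v_xs_x\beta$ with $\beta\in\Phi^+\setminus\Phi_{J_x}^+$ and split on whether $\beta=\alpha_x$. Recall that $s_x$ fixes $\Phi^+\setminus\{\alpha_x\}$ setwise and sends $\alpha_x\mapsto-\alpha_x$. In the case $\beta=\alpha_x$, we have $a=-v_x\alpha_x$; by \Cref{onestripsimple} together with the corollary right after \Cref{kprop}, $v_x\alpha_x\in\Phi^+$ and $k(v_x\alpha_x,x)=0$, so by \Cref{kprop}(1) $k(a,x)=-1=k(a,\id)$. In the case $\beta\neq\alpha_x$, set $\gamma=s_x\beta\in\Phi^+$, so $a=v_x\gamma$; applying $v_x^{-1}$ and \Cref{klem} gives $k(a,x)=\langle\gamma,\mu_x\rangle+\delta_{w_x^{-1}\gamma}\ge-1$. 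If $v_x\gamma<0$ this already matches $k(a,\id)=-1$, and if $v_x\gamma>0$ the only way it could fail to be $\ge0=k(a,\id)$ is $\langle\gamma,\mu_x\rangle=0$ and $w_x^{-1}\gamma<0$; but that would mean $x\in C_{v_x\gamma}$ with $\gamma\in\Phi^+\setminus\{\alpha_x\}$, contradicting the assumption that $C_{v_x\alpha_x}$ is the only critical strip containing $x$.

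The steps are all short calculations, so the main obstacle is simply identifying the right case split and recognizing where the single-strip hypothesis is used: the critical moment is the exclusion of the ``$\langle\gamma,\mu_x\rangle=0$ and $w_x^{-1}\gamma<0$'' subcase in Case 2, which is exactly the translation of ``there is no second strip''. The compatibility in Case 1 (equality rather than just $\ge$) relies on the earlier observation that any strip $C_{v_x\alpha}$ with $\alpha\in\Phi_x$ forces $v_x\alpha\in\Phi^+$; without this, one could not conclude $k(v_x\alpha_x,x)=0$ and the base case would break.
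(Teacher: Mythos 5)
Your proof is correct and is essentially the paper's own argument: the paper notes that $\Phi_x=\{\alpha_x\}$ forces $s_x\in W_x$ and then invokes \cref{jrxalcove}, whose proof is precisely the $k$-value verification you carry out for $r=s_x$. One minor remark: in your Case 2 the subcase $\langle\gamma,\mu_x\rangle=0$ and $w_x^{-1}\gamma<0$ is already impossible because $t^{\mu_x}w_x$ is a dominant alcove (so $k(\gamma,t^{\mu_x}w_x)\ge0$), hence the single-strip hypothesis is not what excludes it; that hypothesis really enters only through \cref{onestripsimple}, to know $\alpha_x$ is simple so that $\gamma=s_x\beta$ remains positive.
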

\begin{proof}
We have $\Phi_x=\{\alpha_x\}$, so $s_x\in W_x$ because $s_x\Phi^+=\Phi^+\cup\{-\alpha_x\}\setminus\{\alpha_x\}$. Now, the conclusion follows from \Cref{jrxalcove}.
\end{proof}
\begin{prop}\label{nonemptysetaS}
If $J_x\neq\mathbb{S}$ in \Cref{xisJalcove}, then $X_x(b)=\emptyset$.
\end{prop}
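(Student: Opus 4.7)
The plan is to adapt the argument of Proposition~\ref{lefttorightgen} to the one-strip case, using the sharpness of the constraint $\Phi_x = \{\alpha_x\}$ to circumvent the length hypothesis $\ell(x) \gg 0$. By \cref{xisJalcove}, $x$ is a $(J_x, v_x s_x)_\sigma$-alcove, so \cite[Proposition~3.6.4]{GHN} gives
\[
  \nu_{s_x \cdot \mu_x} \in Q_{J_x,\mathbb{Q}}^\vee.
\]
Using $s_x \cdot \mu_x = \mu_x - \langle \alpha_x, \mu_x\rangle\, \alpha_x^\vee$ and the observation that $\alpha_x \in \Phi_x$ forces $\langle \alpha_x, \mu_x\rangle \in \{0,1\}$ (the two alternatives coming from the case analysis of when $k(\alpha_x,t^{\mu_x}w_x)=0$), the argument splits naturally.

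In the case $\langle \alpha_x, \mu_x\rangle = 0$ we get $\nu_{\mu_x} \in Q_{J_x,\mathbb{Q}}^\vee$. Since $\nu_{\mu_x}$ is the $\sigma$-average of the dominant coweight $\mu_x$ and is therefore itself dominant, \cref{wronglem} forces $\nu_{\mu_x} = 0$; in the adjoint setting this means $\mu_x$ is central. As in the proof of \cref{nonemptyetaS}, centrality of $\mu_x$ combined with $v_x^{-1}x$ lying in the dominant chamber pins $w_x = \id$ and $x = v_x \in W_0$. I would handle this ``central'' case in a separate lemma (the forthcoming \cref{nucentral}): exploit that $v_x$ lies in exactly one critical strip to deduce $\ell(v_x) = |\Phi^+|-1$, and then check directly that $\sigmasupp(\sigma^{-1}(s_x) v_x s_x) = \mathbb{S}$, contradicting $J_x \ne \mathbb{S}$.

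In the case $\langle \alpha_x, \mu_x\rangle = 1$, we get $\nu_{\mu_x} - \nu_{\alpha_x^\vee} \in Q_{J_x,\mathbb{Q}}^\vee$, where $\nu_{\alpha_x^\vee} = \frac{1}{n}\sum_i \sigma^i(\alpha_x^\vee)$ lies in $Q_{\sigmasupp(s_{\alpha_x}),\mathbb{Q}}^\vee$. If $J_x \cup \sigmasupp(s_{\alpha_x}) \ne \mathbb{S}$, then $\nu_{\mu_x}$ sits inside a proper $Q_{\cdot,\mathbb{Q}}^\vee$ while remaining dominant, so \cref{wronglem} would force $\mu_x$ to be central and hence $\langle \alpha_x,\mu_x\rangle = 0$, a contradiction. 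So we are reduced to $J_x \cup \sigmasupp(s_{\alpha_x}) = \mathbb{S}$ with $J_x \subsetneq \mathbb{S}$. For each simple coroot $\alpha_j^\vee$ with $j \notin J_x$, the containment forces the coefficient of $\alpha_j^\vee$ in $\nu_{\mu_x}$ to equal the explicit rational $k_j/n$ appearing in $\nu_{\alpha_x^\vee}$, while \cref{oldpos} expresses the same coefficient as a strictly positive combination $\sum_i c_i \pi_{ij}$ of the Cartan-inverse entries $\pi_{ij}$ tabulated in \cite{OnishchikVinberg}.

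The main obstacle is this final type-by-type elimination: comparing the $\pi_{ij}$-based lower bound for the coefficients of $\nu_{\mu_x}$ against the explicit $k_j/n$ from $\nu_{\alpha_x^\vee}$, across each Dynkin diagram and each admissible triple $(\mu_x,\alpha_x,J_x)$. I expect that in all but a short explicit list the strict inequality $\sum_i c_i\pi_{ij} > k_j/n$ already yields a contradiction, and that every surviving configuration in turn violates the one-strip hypothesis $\Phi_x = \{\alpha_x\}$, because some other positive root $\beta$ would end up satisfying $\langle \beta,\mu_x\rangle + \delta_{w_x^{-1}\beta} = 0$. This exhaustive finite check is precisely what \S4 is structured to carry out.
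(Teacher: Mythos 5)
Your reduction is the paper's own: \cref{xisJalcove} together with \cite[Proposition 3.6.4]{GHN} gives $\nu_{s_x\cdot\mu_x}-\bar{\nu}_b\in Q^\vee_{J_x,\mathbb{Q}}$; the split into $\langle\alpha_x,\mu_x\rangle\in\{0,1\}$ is exactly the dichotomy $s_x\cdot\mu_x=\mu_x$ versus $\mu_x-\alpha_x^\vee$; the $0$-case is handled by \cref{wronglem} plus the central analysis of \cref{nonemptyetaS}/\cref{nucentral} (one small imprecision: centrality of $\mu_x$ gives $x=v_xt^{\mu_x}$ with a possibly nontrivial length-zero translation part, not $x=v_x\in W_0$); and in the $1$-case your coefficient comparison along the $\sigma$-orbit $\calO$ of $s_x$ is the paper's pairing with $\varpi_\calO$, which yields $\langle\varpi_\calO,\mu_x\rangle=1$ and, via \cref{oldpos} and the inverse Cartan tables of \cite{OnishchikVinberg}, a contradiction except for a short list of vertices. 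Up to that point the proposal is sound and coincides with the paper.

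The gap is the final step, which you only state as an expectation: eliminating the surviving configurations ($\mu_x=\varpi_x^\vee$ at the degree-one vertex of $B_n$, $C_n$, $D_n$, the middle vertex of $A_3$, and the $A_n$ end-vertex case with $\mu_x$ of the shape $\varpi_1^\vee+\varpi_n^\vee$) is the substantive content of the proposition beyond the formal reduction, and the paper devotes \cref{postponedproof} to it. Moreover, the mechanism you sketch --- that each survivor ``violates the one-strip hypothesis'' because some extra root $\beta$ would satisfy $\langle\beta,\mu_x\rangle+\delta_{w_x^{-1}\beta}=0$ --- would fail: such $\mu_x$ do occur for honest one-strip alcoves (e.g.\ in $B_2$, $x=s_2t^{\varpi_1^\vee}s_1$ lies in exactly one critical strip), so the coefficient identity cannot contradict $\Phi_x=\{\alpha_x\}$ by itself. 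What has to be shown is that for these alcoves the set $J_x$ is forced to be all of $\bbS$; that is, the contradiction must be with $\sigma^{-1}(s_x)\etasigma(x)s_x\in W_{J_x}$ where $J_x=\bbS\setminus\calO$. Concretely, the paper uses uniqueness of the strip (applied to roots such as $s_2\alpha_1=\alpha_1+\alpha_2$) to pin down $w_x=s_1$ or $v_x=w_0s_1$, and then derives a root-sign contradiction like $\Phi^+\ni v_x\alpha_1\in W_{\bbS\setminus\calO}(-\alpha_1)\subset\Phi^-$, with a separate argument for the $A_n$ case $\mu_x=\varpi_1^\vee+\varpi_n^\vee$. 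Until this finite but genuinely nontrivial case analysis is carried out (and aimed at the right contradiction), the proof is incomplete.
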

\begin{proof}
Note that $x$ is a $(J_x,v_xs_x)_\sigma$-alcove. Hence, it is enough to prove that\[\nu_{\lambda'}-\overline{\nu}_b\in Q_{J_x}^\vee\otimes \mathbb{Q}\]leads to a contradiction where $\lambda'\colonequals s_x \mu_x$ following the proof of \cite[Proposition 3.6.4]{GHN}. However, $\lambda'=s_x\mu_x$ is either $\mu_x-\alpha_x^\vee$ or $\mu_x$. The latter case is already proved\footnote{Note that, as mentioned before, the proof in \textit{loc.cit.} is incorrect when referring to `Section 3.5 (2)' and you need to use \Cref{wronglem} instead.} in \textit{loc.cit.}.

Now, let us consider the case $s_x\mu_x=\mu_x-\alpha_x^\vee$. We have \begin{equation}\label{qjx}
    \nu_{s_x\mu_x}-\bar{\nu}_b\in Q^\vee_{J_x,\mathbb{Q}}.
\end{equation}

Let $\varpi_i^\vee$ be the fundamental coweights and $\varpi_i$ be the fundamental weights. For simplicity, denote by $\mathcal{O}$ the $\sigma$-orbit of $s_x$ and by $\alpha_{\mathcal{O}}$ (resp. $\varpi_\calO$ or $\varpi_\calO^\vee$) the sum of the elements in the $\sigma$-orbit of $\alpha_x$ (resp. $\varpi_\calO$ or $\varpi_\calO^\vee$) and take the inner product of \eqref{qjx} with $\varpi_\calO$. Then we get $0$ on the right hand side and, on the left hand side, we have $\langle \varpi_\calO,s_x\mu_x\rangle = \langle s_x\varpi_\calO,\mu_x\rangle=\langle \varpi_\calO-\alpha_x,\mu_x\rangle=\langle\varpi_\calO,\mu_x\rangle-1$. 

Let $\mu_x$ be $\sum c_i\varpi_i^\vee$ for some $c_i\in\mathbb{Z}_{\ge0}$, we know that the coefficient $c_x$ of $\varpi_x^\vee$ is 1 from the assumption $s_x\mu_x = \mu_x -\alpha_x^\vee$. Hence, we have $1 = \langle \varpi_\mathcal{O} , \mu_x \rangle \ge \langle \varpi_x , \mu_x\rangle=\langle\varpi_x,\varpi_x^\vee+\sum_{i\neq x}c_i\varpi_i^\vee\rangle = \langle \varpi_x,\varpi_x^\vee\rangle + \sum_{i\neq x}c_i \langle \varpi_x, \varpi_i^\vee\rangle$. The first inequality comes from the fact that $\mu_x$ is dominant. In fact, if we denote the connected component of the Dynkin diagram containing $\alpha_x$ by $D$, the latter summation part is actually for $i$ such that $i\neq x$ and $\alpha_i$ in $D$. Note that $\langle\varpi_i,\varpi_j^\vee\rangle$ is the $(i,j)$-entry of the inverse of the Cartan matrix of $D$ whose entries are all positive. In most cases, this already exceeds $1$ by \cite[Reference Chapter. §2. Table 2]{OnishchikVinberg}. Note that $\langle \varpi_x,\varpi_x^\vee\rangle$ is the corresponding entry on the diagonal of the inverse matrix.

Therefore, there are two possible cases: a) $\langle\varpi_x,\varpi_x^\vee\rangle=1$ and all $c_i$'s are $0$ for $i$ such that $i\neq x$ and $\alpha_i$ in $D$ and b) $\langle\varpi_x,\varpi_x^\vee\rangle<1$. The first case is when $x$ is the unique vertex of degree=1 (with a single edge) in $B_\ge2$, $C_\ge2$, $D_\ge4$ or the middle vertex of $A_3=D_3$. The second case is when $x$ is one of two degree=1 vertices in $A_n$. As they contain a little tedious computation, we put it off to \Cref{postponedproof}.
\end{proof}

Combining \Cref{nonemptyetaS} and \Cref{nonemptysetaS}, we get
\[\label{lefttoright}
    X_x(b)\neq\emptyset\text{ only if }\sigmasupp(\eta_\sigma(x))=\sigmasupp(\sigma^{-1}(s_x)\eta_\sigma(x)s_x)=\mathbb{S}.\]




\begin{proof}[Proof of \Cref{mainone}]
Note again that $\Phi_x=\{\alpha_x\}$ so that $W_x=\{\id_{W_0},s_x\}$. Now, combine \Cref{righttoleftgen} and \Cref{lefttoright}.
\end{proof}

\subsection{On the condition $\widetilde{\sigmasupp}(x)=\widetilde{\bbS}$}
Finally, the following lemma explains why the assumption $\widetilde{\sigmasupp}(x)=\widetilde{\bbS}$ is not necessary in \Cref{mainthm} (1). Note that $\widetilde{\sigmasupp}(x)\neq\widetilde{\bbS}$ implies that $\nu_x$ is central.

\begin{lem}\label{nucentral}
    Let $x$ be in exactly one critical strip and suppose $\nu_x$ is central. Then both $\sigmasupp(\eta_\sigma(x))$ and $\sigmasupp(\sigma^{-1}(s_x)\eta_\sigma(x)s_x)$ are $\bbS$.
\end{lem}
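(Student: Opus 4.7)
The strategy is to verify each of the two $\sigma$-support conditions by contradiction, following the template of \cref{nonemptyetaS} and \cref{lefttorightgen} but now leveraging that $W_x = \{\id_{W_0}, s_x\}$ and that $[x]$ is basic with trivial Newton point. Since $\nu_x$ is central and $G$ is adjoint, there is a basic class $[b]$ with $\kappa_G(b) = \kappa_G(x)$, and $X_x(b) \neq \emptyset$ automatically. For each $r \in W_x$, \cref{jrxalcove} writes $x$ as a $(J_{r,x}, v_x r^{-1})_\sigma$-alcove (where $J_{r,x} := \sigmasupp(\sigma^{-1}(r)\etasigma(x)r^{-1})$), and the forward implication of \cite[Proposition 3.6.4]{GHN}, as used inside the proof of \cref{lefttorightgen}, delivers
\[ \nu_{r\cdot\mu_x} \in Q^\vee_{J_{r,x},\bbQ}. \]

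For $r = \id_{W_0}$, assume $J_{\id,x} \neq \bbS$. Since $\nu_{\mu_x}$ is dominant (as the $\sigma$-average of the dominant $\mu_x$), \cref{wronglem} forces $\nu_{\mu_x} = 0$, hence $\mu_x = 0$. This reduces to the pure Weyl case $x = v_x$, $w_x = \id$, where the one-strip hypothesis translates via the $k$-value computation in \cref{nonemptyetaS} to $\ell(v_x) = |\Phi^+| - 1$; consequently $v_x = w_0 s_{\alpha_x}$ is a Bruhat coatom of $w_0$. In any simple type of rank $\geq 2$, deleting a single simple reflection from $w_0$ still yields an element of full support, so $\sigmasupp(\etasigma(x)) = \sigmasupp(v_x) = \bbS$, contradicting the assumption.

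For $r = s_x$, assume $J_{s_x,x} \neq \bbS$. The $k$-value analysis of $x \in C_{v_x\alpha_x}$ splits into (a) $\langle \alpha_x, \mu_x\rangle = 0$, where $s_x\cdot\mu_x = \mu_x$ stays dominant and the previous argument reduces to $\mu_x = 0$ (in which case $\sigma^{-1}(s_{\alpha_x})w_0$ still has full support by the same coatom argument); and (b) $\langle \alpha_x, \mu_x\rangle = 1$, where $s_x\cdot\mu_x = \mu_x - \alpha_x^\vee$ is no longer dominant. In case (b), since $\nu_{\alpha_x^\vee} \in Q^\vee_{\calO_x,\bbQ}$ for $\calO_x$ the $\sigma$-orbit of $\alpha_x$, we get
\[ \nu_{\mu_x} = \nu_{s_x\mu_x} + \nu_{\alpha_x^\vee} \in Q^\vee_{J_{s_x,x} \cup \calO_x, \bbQ}. \]
If $J_{s_x,x} \cup \calO_x \neq \bbS$, \cref{wronglem} again gives $\mu_x = 0$, contradicting $\langle\alpha_x,\mu_x\rangle = 1$; otherwise $\sigma$-stability forces $J_{s_x,x} = \bbS \setminus \calO_x$.

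The main obstacle is ruling out this residual configuration $J_{s_x,x} = \bbS \setminus \calO_x$ in case (b). My plan is to write $\etasigma(x) = \sigma^{-1}(s_x) u s_x$ with $u \in W_{\bbS \setminus \calO_x}$ and show that the combined constraints $\sigmasupp(\etasigma(x)) = \bbS$, $\nu_x$ central, and $\langle\alpha_x,\mu_x\rangle = 1$ force $\Phi_x$ to contain at least one further positive root besides $\alpha_x$, contradicting the one-strip hypothesis. Low-rank checks in $\PGL_3$ support this: the only configuration giving $J_{s_x,x} = \bbS\setminus\{s_x\}$ (hence $\etasigma(x) = w_0$) is $x = t^{\alpha_1^\vee + \alpha_2^\vee} w_0$, which indeed lies in both $C_{\alpha_1}$ and $C_{\alpha_2}$. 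Converting this numerical coincidence into a structural $k$-value argument in arbitrary simple type is the delicate step I expect to be the hardest part of the proof.
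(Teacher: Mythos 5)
Your reduction---splitting on whether $\mu_x$ is central, translating the one-strip hypothesis into a Newton-point membership $\nu_{r\cdot\mu_x}\in Q^\vee_{J,\bbQ}$ for each $r\in W_x=\{\id_{W_0},s_x\}$, and applying \Cref{wronglem}---is the right skeleton and tracks the paper's proof. One small stylistic difference: you obtain the membership by noting that $[x]$ is basic (so $X_x(b)\neq\emptyset$ automatically) and then invoking \Cref{jrxalcove} together with the forward implication of \cite[Proposition 3.6.4]{GHN}, whereas the paper gets the same membership directly and without reference to any $b$, from the identity $\nu_x=v_x\cdot\nu_{v_x^{-1}x\sigma(v_x)}$ combined with \Cref{GHN351} and \Cref{wmu-mu}. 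Both routes are legitimate. Your treatment of the $\mu_x=0$ case, the $r=\id_{W_0}$ case, and the $r=s_x$ subcase with $\langle\alpha_x,\mu_x\rangle=0$ is fine (the ``coatom has full support'' assertion is exactly what the length-counting $\ell(v_x)=|\Phi^+|-1\leq|\Phi^+_{\sigmasupp(v_x)}|$ in \Cref{nonemptyetaS} establishes).

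The proof fails to close in exactly the place you flag. In the $r=s_x$ subcase with $\langle\alpha_x,\mu_x\rangle=1$, you correctly reduce to the residual configuration $J_{s_x,x}=\bbS\setminus\calO_x$ (with $\calO_x$ the $\sigma$-orbit of $s_x$), but then state a plan---show the constraints force a second critical strip---verify it only in $\PGL_3$, and write that converting this to a general argument is ``the delicate step I expect to be the hardest part of the proof.'' That is precisely where all of the remaining content lives, and you do not supply it. The paper discharges this configuration with the instruction ``We can now repeat the proof of \Cref{nonemptysetaS},'' which is not a formality: after replacing $\bar\nu_b$ by the central value $\nu_{(v_xs_x)^{-1}x\sigma(v_xs_x)}$, it runs the explicit positivity argument based on the entries of the inverse Cartan matrix from \cite[Reference Chapter, \S2, Table 2]{OnishchikVinberg}, worked out case by case in the section containing the continuation of \Cref{nonemptysetaS} (Cases 1-1, 1-2, 1-3, 2). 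Your alternative plan (forcing an extra element of $\Phi_x$) is plausible and your $\PGL_3$ check is consistent with it, but until it is executed in all simple types---or the inverse-Cartan computation is imported---the proposal does not prove the lemma.
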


\begin{proof}
    Note that $\nu_x=v_x\cdot \nu_{v_x^{-1}x\sigma(v_x)}$ and $v_x^{-1}x\sigma(v_x)=t^{\mu_x}w_x\sigma(v_x)=t^{\mu_x}\sigma(\eta_\sigma(x))$\footnote{Caution. This is not necessarily in the dominant Weyl chamber.}. Hence, $\nu_{t^{\mu_x} \sigma(\eta_\sigma(x))}$ is also central. On the other hand, similarly to \Cref{GHN351}, we have $\nu_{\mu_x}-\nu_{t^{\mu_x}\sigma(\eta_\sigma(x))}\in Q_{\sigmasupp(\sigma(\eta_\sigma(x))),\bbQ}^\vee$ by applying \Cref{wmu-mu}. As $\mu_x$ is dominant, $\nu_{\mu_x}$ is dominant. If $\mu_x$ is non-central, then $\nu_{\mu_x}$ is also non-central and so \Cref{wronglem} enforces $\sigmasupp(\sigma(\etasigma(x)))=\bbS$. Similarly as above, we have \[\nu_{s_x\cdot\mu_x}-\nu_{t^{s_x\cdot\mu_x}s_xw_x\sigma(v_x)\sigma(s_x)}\in Q_{\sigmasupp(s_x\sigma(\etasigma(x))\sigma(s_x)),\bbQ}^\vee.\] However, $t^{s_x\cdot\mu_x}s_xw_x\sigma(v_x)\sigma(s_x)= (v_xs_x)^{-1} x\sigma(v_xs_x)$, but $\nu_x$ is central so the second term is central. We can now repeat the proof of \Cref{nonemptysetaS}.
    
    When $\mu_x$ is central, we have $x=v_xt^{\mu_x}$. We can repeat \Cref{nonemptyetaS} to show that $\sigmasupp(\etasigma(x))=\bbS$. Moreover, the one critical strip assumption tells us that $w_0v_x^{-1}$ is a simple reflection $s$ (corresponding to $\alpha\in\Phi^+$). Then, $\alpha_x=v_x^{-1}\alpha$ and so $v_x\alpha_x\in\Phi^+$, that is, $\ell(v_xs_x)=\ell(v_x)+1=\ell(w_0)$. Hence, $v_x=w_0s_x$ and $\sigma^{-1}(s_x)\etasigma(x)s_x=\sigma^{-1}(s_x)w_0$ whose support is $\bbS$.
\end{proof}

\section{Some computations and applications}
In this section, we show computations for possibly exceptional cases (mentioned in \Cref{nonemptysetaS}) when $x$ belongs to exactly one critical strip. Moreover, we have some remarks on type $A_n$ case and we prove \Cref{bgx}.
\subsection{Completion of the proof for the one critical strip case}
\begin{proof}[Proof of \Cref{nonemptysetaS} (cont'd)]\label{postponedproof}We separate into two cases. (Note that we use \cite[Reference Chapter. \S2. Table 2]{OnishchikVinberg} here.)

\noindent\textit{Case 1-1}. $x$ is the vertex of degree $1$ (with a single edge) in $B_{\neq2},C_{\neq2},D_{\ge4}$.

$\langle\varpi_x,\varpi_x^\vee\rangle$ is already $1$. So, $c_i$'s are all zero, that is, $\mu_x=\varpi_x^\vee$. For simplicity, we use the notation $*_1$ instead of $*_x$ where $*=s,\alpha, \varpi^\vee$ in this paragraph. For any positive root $\alpha$ with no support at $\alpha_1$, we have $x\not\in CS_{v_x\alpha}$ and $\langle\alpha,\varpi_1^\vee\rangle=0$. So we have $v_x\alpha\in\Phi^-$ and $w_x^{-1}\alpha\in\Phi^+$ by \Cref{klem}. As $x\in CS_{v_x\alpha_1}$ and $\langle\alpha_1,\varpi_1^\vee\rangle=1$, we have $v_x\alpha_1\in\Phi^+$ and $w_x^{-1}\alpha_1\in\Phi^-$. Note that $s_2\alpha_1=\alpha_1+\alpha_2$ is different from $\alpha_1$ and in $\Phi^+$. Moreover, $\langle s_2\alpha_1,\varpi_1^\vee\rangle=1$. Due to the uniqueness assumption of critical strips, either $v_x(s_2\alpha_1)\in\Phi^-$ or $ w_x^{-1}(s_2\alpha_1)\in\Phi^+$ by \Cref{klem}.
\begin{enumerate}
    \item The latter case: $w_x^{-1}(\alpha_1+\alpha_2)\in\Phi^+$ so that $w_x^{-1}s_1\alpha\in\Phi^+$ for all positive simple roots $\alpha$. Hence, $w_x=s_1$ so that $\sigma^{-1}(s_1)\eta_\sigma(x)s_1= vs_1$. As $J_x=\bbS\setminus\calO$, $v_x\in W_{\bbS\setminus\calO}s_1$. However, $\Phi^+\ni v_x\alpha_1\in W_{\bbS\setminus\calO}(-\alpha_1)\subset \Phi^-$ as $1\in\calO$. Contradiction.
    \item The first case: Apply the same argument to $w_0v_x$ instead of $w_x^{-1}$ and get $v_x=w_0s_1$. Then $\sigma^{-1}(s_1)\etasigma(x)s_1=\sigma^{-1}(s_1w_x)w_0$. Consider $\sigma(w_0)w_x^{-1}s_1\alpha_1$ which is in $\Phi^-$ by above, but $\sigma(w_0)w_x^{-1}s_1\in W_{\bbS\setminus\calO}$ so it should be a positive root which is a contradiction.\vspace{-2pt}
\end{enumerate}

\noindent\textit{Case 1-2}. $x$ is the middle point in $A_3$.

$\mu_x=\varpi_2^\vee$. Similarly as before, $v_x\alpha_2\in\Phi^+$ and $w_x^{-1}\alpha_2\in\Phi^-$. It means that $w_x\in s_2W_{\{1,3\}}$ or $w_x=s_2s_1s_3s_2$. Hence, $s_2w_x\in W_{\{1,3\}}$ or $s_2w_x=s_1s_3s_2$ and $\sigma^{-1}(s_2)\etasigma(x)s_2=\sigma^{-1}(s_2w_x)v_xs_2$. As it belongs to $W_{\bbS\setminus \calO}$, we have $v_x\in W_{\bbS\setminus\calO}s_2$ so that $v_x\alpha_2\in\Phi^-$ which is a contradiction or $v_x\in s_2 W_{\bbS\setminus\calO}s_2$. In the latter case, note that $v_x$ is supported at $s_1$ because $v_x\alpha_1\in\Phi^-$ and so $v_x\alpha_2\in\Phi^-$ from $v_x\in s_2 W_{\bbS\setminus\calO}s_2$.\vspace{3pt}

\noindent\textit{Case 1-3}. $x$ is the vertex with an outward arrow in $B_2=C_2$.

May assume $B_2$ and $\mu_x=\varpi_1^\vee$. This is same as \textit{Case 1-1} as $s_2\alpha_1=\alpha_1+\alpha_2$.\vspace{3pt}

\noindent\textit{Case 2}. $x$ is a vertex of degree $1$ in $A_n$.

As the sum is $1$, $D\cap\calO=\{x\}$ necessarily and $\mu_x=\varpi_x^\vee+\varpi_y^\vee$ where $y$ is a vertex (possibly in another connected component) of degree $1$ not in $\calO$.

For simplicity, we will call $D'$ the connected component containing $\varpi_y^\vee$ and use $\varpi_{1,D}^\vee$ instead of $\varpi_x^\vee$ and $\varpi_{n,D'}^\vee$ instead of $\varpi_y^\vee$. Moreover, $\sigma$ does not shuffle $1$ and $n$ in (distinct) connected components.
\begin{enumerate}
    \item The case $D\neq D'$: The proof is exactly the same as \textit{Case 1-1}.
    \item The case $D=D'$: $\mu_x=\varpi_1^\vee+\varpi_n^\vee$ in the same connected component. The proof is similar. In a similar way, we have $v_x\alpha\in\Phi^-$ and $w_x^{-1}\alpha\in\Phi^+$ for $\alpha$ not supported at both $\alpha_1$ and $\alpha_n$. For $\alpha=\alpha_1$, we have the opposite situation. For the remaining case, it should not be the opposite situation.
    
    The method is essentially the same. Note that the goal in the above was to show that $w_x=s_1$ or $v_x=w_0s_1$. Here, the goal is to show that $w_x=s_1\cdots s_m$ for some $m\le n$ or $v_x=w_0s_m\cdots s_1$ for some $m\le n$. Using that the roots are of the form $\alpha_i+\alpha_{i+1}+\cdots+\alpha_j$, one can prove that. We skip the proof.\vspace{-16pt}
\end{enumerate}
\end{proof}

\subsection{Sharper bounds for \Cref{mainthm} (3)}

Denote by $Q^\vee_{>0}$ the set of positive $\bbQ$-linear sums of all simple coroots. Note that $\nu_{\mu_x}\in Q^\vee_{>0}$ unless $\mu_x$ is central.

Suppose that $r\in W_x$ and $s$ is a simple reflection such that $sr\in W_x$ and $sr>r$. Considering \Cref{rmu-mu}, we know that $sr\cdot\mu=r\cdot\mu- \langle r^{-1}\alpha,\mu\rangle\alpha^\vee$ where $\alpha$ is the simple positive root corresponding to $s$. Moreover, as $sr\in W_x$, $\langle r^{-1}\alpha,\mu\rangle=0$ or $1$. Hence, as long as $sr\in W_x$ (assuming $sr>r$), we know that $sr\cdot\mu$ is the same as $r\cdot\mu$ or $r\cdot\mu - \alpha^\vee$.

Therefore, if $\nu_{r'\cdot\mu_x}\in Q^\vee_{J,\bbQ}$ for some $J\subsetneq\bbS$ and $r'\in W_x$, we can find a minimal $r_0\in W_0$ such that $\ell(r'r_0^{-1})+\ell(r_0)=\ell(r')$ (so that $r_0\in W_x$ by \Cref{wxleftclosed}) and $r_0\cdot\mu\not\in Q^\vee_{>0}$ in the sense that $sr_0\cdot\mu\in Q^\vee_{>0}$ for any simple reflection $s$ such that $sr_0<r_0$. Note that it is not necessarily unique and we make any choice.

The minimality assumption tells us that any reduced expression $s_{i_k}\cdots s_{i_1}$ of $r_0$ has the same $s_{i_k}$ (the first simple reflection should be constant). Moreover, $\langle s_{i_1}\cdots s_{i_{k-1}}\alpha_{i_k},\mu_x\rangle=1$ considering \Cref{rmu-mu}. Finally, $\supp(r_0)$ should be connected. Now, the precise statement of \Cref{mainthm} (3) is the following.

\begin{prop}\label{Anresult}
    In type $A_n$, suppose that $\mu_x\in Q^\vee$ and assume $\langle\varpi_1,\mu_x\rangle>1$, $\langle\varpi_n,\mu_x\rangle>1$. Then \Cref{mainconj} holds.
\end{prop}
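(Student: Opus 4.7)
The plan is to establish the $\Rightarrow$ direction of \Cref{mainconj} in type $A_n$ under the stated hypotheses, since the $\Leftarrow$ direction is already furnished by \cref{righttoleftgen}. Suppose toward contradiction that $X_x(b)\neq\emptyset$ while some $r'\in W_x$ satisfies $J := J_{r',x}\subsetneq\mathbb{S}$. By \cref{jrxalcove} and \cite[Proposition 3.6.4]{GHN}, this gives $\nu_{r'\cdot\mu_x}\in Q^\vee_{J,\bbQ}$. Since $\sigma$ merely permutes simple coroots, the Newton average of any vector in $Q^\vee_{>0}$ remains in $Q^\vee_{>0}$; consequently the vanishing of at least one simple-coroot coefficient of $\nu_{r'\cdot\mu_x}$ forces $r'\cdot\mu_x\not\in Q^\vee_{>0}$.

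Apply the minimality construction described just before the proposition: replace $r'$ by some $r_0\in W_x$ with $\ell(r'r_0^{-1})+\ell(r_0)=\ell(r')$, still satisfying $r_0\cdot\mu_x\not\in Q^\vee_{>0}$ but with $(sr_0)\cdot\mu_x\in Q^\vee_{>0}$ for every simple reflection $s$ such that $sr_0<r_0$. The recorded structural properties then apply: every reduced expression $s_{i_k}\cdots s_{i_1}$ of $r_0$ shares the same leftmost letter $s_{i_k}$, $\supp(r_0)$ is connected and hence an interval $[a,b]$ in the $A_n$ diagram, and $\langle s_{i_1}\cdots s_{i_{k-1}}\alpha_{i_k},\mu_x\rangle=1$, so the $\alpha_{i_k}^\vee$-coefficient of $r_0\cdot\mu_x$ is exactly $0$.

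Now translate to coordinates: write $\mu_x=(\lambda_1,\dots,\lambda_{n+1})$ with $\lambda_1\ge\cdots\ge\lambda_{n+1}$ summing to $0$. The partial sums $c_j=\lambda_1+\cdots+\lambda_j$ are the coefficients of $\mu_x$ in the simple coroot basis, integer-valued by $\mu_x\in Q^\vee$ and concave in $j$ with $c_0=c_{n+1}=0$ by dominance. The hypotheses read $c_1\ge 2$ and $c_n\ge 2$, and concavity then forces $c_j\ge 2$ for every $j\in[1,n]$. Let $d_{i_k}$ be the $\alpha_{i_k}^\vee$-coefficient of $s_{i_{k-1}}\cdots s_{i_1}\cdot\mu_x$; since only the steps $j$ with $i_j=i_k$ alter this coefficient, one has $d_{i_k}=c_{i_k}-\sum_{j<k,\,i_j=i_k}\langle s_{i_1}\cdots s_{i_{j-1}}\alpha_{i_k},\mu_x\rangle$, with each summand in $\{0,1\}$ because the corresponding inversion root of $r_0$ lies in $\Phi_x$. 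Combined with $d_{i_k}=1$ and $c_{i_k}\ge 2$, this means the number of such $j$ with inversion value $1$ is at least $c_{i_k}-1\ge 1$. Use the closedness of $\Phi^+\setminus\Phi_x$ from \cref{mainprop1} together with the $A_n$-specific combinatorics of reduced expressions supported on $[a,b]$ to bound this count from above, producing a contradiction.

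The main obstacle lies in this last step of combinatorial bookkeeping. Because $W_x$ is not parabolic in general and $\Phi_x$ can contain non-simple positive roots, one must exploit \cref{mainprop1} to restrict the admissible inversion sets of $r_0$ and then carefully count how many inversions of $r_0$ can act with value $1$ on the $\alpha_{i_k}^\vee$-coefficient. A case analysis on the position of $i_k$ within $[a,b]$ and on the shape of $\Phi_x\cap\Phi^+_{[a,b]}$ appears unavoidable, in spirit extending the two-vertex analysis of \cref{nonemptysetaS} from a single simple reflection to the entire reduced expression of $r_0$. The two-sided endpoint hypothesis $c_1,c_n\ge 2$ is essential: the bound $c_j\ge 2$ it produces is sharp and reflects precisely the remaining gap between this result and \cref{firstlemma}.
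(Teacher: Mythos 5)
Your setup coincides with the paper's: the $\Leftarrow$ direction is \cref{righttoleftgen}, and for $\Rightarrow$ you pass via \cref{jrxalcove} and \cite[Proposition 3.6.4]{GHN} to $\nu_{r'\cdot\mu_x}\in Q^\vee_{J,\bbQ}$ and then to the minimal element $r_0$ with its unique descent $m$, connected support, and last inversion pairing equal to $1$ — all of which is already recorded in the paper just before the proposition. The problem is that your proof stops exactly where the actual work begins. The final step is announced as ``use the closedness of $\Phi^+\setminus\Phi_x$ \ldots to bound this count from above, producing a contradiction,'' and you then concede that this bookkeeping is the main obstacle and that a case analysis ``appears unavoidable.'' No such bound is produced, and it is far from clear that the inversion-counting route closes: $\Phi_x$ need not consist of simple roots, $r_0$ can be long, and nothing in your sketch controls how many inversion roots of $r_0$ lying in $\Phi_x$ contribute to the $\alpha_{i_k}^\vee$-coefficient. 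So as written this is a genuine gap, not a complete proof. (A smaller quibble: the inference that the $\alpha_{i_k}^\vee$-coefficient of $r_0\cdot\mu_x$ is exactly $0$ from the single identity $\langle s_{i_1}\cdots s_{i_{k-1}}\alpha_{i_k},\mu_x\rangle=1$ ignores earlier letters of the reduced word equal to $s_{i_k}$; what minimality actually gives is that only the $m$-th coefficient can fail to be positive.)

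The paper closes the argument quite differently, and without the global count you are after. Writing $r_0\cdot\mu_x\in Q^\vee$ with all coefficients positive integers except at the unique descent $m$, it splits into two short cases. If $m$ is an interior node, then $\langle\alpha_m, r_0\cdot\mu_x\rangle\le -2$ because both neighboring coefficients are $\ge 1$; but $\langle\alpha_m,r_0\cdot\mu_x\rangle=\langle r_0^{-1}\alpha_m,\mu_x\rangle\ge -1$ since $r_0^{-1}\alpha_m\in-\Phi_x$ — contradiction, with no hypothesis on $\mu_x$ needed at all. If $m\in\{1,n\}$, the paper invokes Nie's results: $r_0\cdot\mu_x$ is weakly dominant, so $\mu_x-r_0\cdot\mu_x$ is a sum of mutually orthogonal positive roots; in type $A$ no two positive roots containing $\alpha_1$ (resp. $\alpha_n$) are orthogonal, so the $\alpha_1^\vee$- (resp. $\alpha_n^\vee$-) coefficient of $\mu_x$ exceeds that of $r_0\cdot\mu_x$ by at most $1$, hence is $\le 1$, contradicting $\langle\varpi_1,\mu_x\rangle>1$ (resp. $\langle\varpi_n,\mu_x\rangle>1$). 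Note in particular that your concavity observation $c_j\ge 2$ for all $j$ is not what is needed: the endpoint hypothesis enters only in the endpoint-descent case, and the interior case is handled by a purely local pairing argument. To repair your proposal you would either have to supply the missing combinatorial bound (which I do not see how to do in the generality required) or replace it by these two observations.
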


\begin{proof}
    Let $r_0$ be a minimal element chosen before and $m$ be the unique number such that $r_0\alpha_m<0$. Similar to \Cref{lefttorightgen}, it is enough to show that $r_0\cdot\mu_x\in Q^\vee_{\neq\bbS,\bbQ}$ is a contradiction. Note that $\mu_x\in Q^\vee$, so we can assume $r_0\cdot\mu_x\in Q_{\neq\bbS}^\vee$. Due to the minimality of $r_0$, we have that $\bbS\setminus\supp(r_0\cdot\mu_x)=\{m\}$. Moreover, for each $i\in\supp(r_0\cdot\mu_x)$, $\alpha_i^\vee$-coefficient of $r_0\cdot\mu_x$ is a positive integer. Therefore, if $m\neq1$ or $n$, we have $\langle\alpha_m,r_0\cdot\mu_x\rangle\le-2$. However, $r_0^{-1}\alpha_m\in-\Phi_x$ so that $\langle r_0^{-1}\alpha_m,\mu_x\rangle=-1$ which is a contradiction.
    
    When $m=1$ (resp. $n$), note that $r_0\cdot\mu_x\in Q_J^\vee$ (where $J=\bbS\setminus\{m\}$) is weakly dominant in the sense of \cite[Proposition 3.1]{Nie}. Therefore, $r_0\cdot\mu$ and $\mu$ fits into the setting in \cite[Lemma 5.9]{Nie} so that $\mu-r_0\cdot\mu$ is the sum of some positive roots orthogonal to each other. However, no two positive roots containing $\alpha_1$ (resp. $\alpha_n$) can be orthogonal. This implies that the $\alpha_1^\vee$(resp. $\alpha_n^\vee$)-coefficient of $\mu$ is either that of $r_0\cdot\mu$ (which is $0$) or one larger than that. So it must be $1$. This contradicts to the assumption.
\end{proof}

We remark that the above proof works for $D_n$ and $E_n$ types but need one more assumption that $\langle\varpi_{n-1},\mu_x\rangle>1$ where $n-1$ is the vertex of degree $1$ other than the vertices $1$ and $n$.

\subsection{Application: Cordial elements and generic $\sigma$-conjugacy class}
We summarize relevant concepts. For more details, we refer to \cite{MilVie}.

Given $x\in\extw$, let $B(G)_x$ be the set of $[b]\in B(G)$ such that $I\dot{x}I\cap [b]\neq\emptyset$. Then, $B(G)_x$ contains a unique maximal element called generic $\sigma$-conjugacy class and denoted by $[b_x]$.
\begin{defn}[Cordial element]\label{cordial}
    An element $x\in \extw$ is called cordial if\[\ell(x)-\ell(\etasigma(x))=\langle2\rho,\nu_x\rangle-\defe(b_x),\]where $2\rho$ is the sum of all positive coroots and $\defe(b_x)$ is the difference between the rank of $G$ and $J_{b_x}$ over $F$, the $\sigma$-centralizer of $b_x$ in $\brG$.
\end{defn}
For example, $x$ in the antidominant Weyl chamber is cordial (\textit{loc.cit.} Theorem 1.2).

\begin{lem}\label{saturated}
    Let $x$ be a cordial element. Then, $B(G)_x$ is saturated in the following sense:\begin{quotation}Suppose that $[b_1]$, $[b_2]\in\! B(G)_x$ satisfy $[b_1]\le[b_2]$. Then, for any $[b]\in\! B(G)$ such that $[b_1]\le[b]\le[b_2]$, we have $[b]\in\! B(G)_x$.\end{quotation}Here, $\le$ is the partial ordering defined in $B(G)$.
\end{lem}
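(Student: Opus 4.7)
The plan is to appeal directly to the work of Milićević--Viehmann. In \cite{MilVie} it is established that cordiality of $x$ (in the sense of \Cref{cordial}) is equivalent to a bundle of structural properties of the $\sigma$-conjugacy class decomposition of the Iwahori double coset $I\dot xI$, and one of those equivalent properties is precisely the saturation of $B(G)_x$. So the proof is essentially a citation of the appropriate theorem of \textit{loc.cit.}, together with a check that their formulation of saturation matches the one stated here.

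If one wants to see the underlying mechanism rather than a black-box citation, the strategy is as follows. For cordial $x$ the dimension formula
\[
\dim\bigl(I\dot xI\cap[b]\bigr)=\tfrac{1}{2}\bigl(\ell(x)+\ell(\eta_\sigma(x))-\defe(b)\bigr)
\]
holds uniformly for every $[b]\in B(G)_x$; this is the content of the cordiality equation, read on both sides of the defining identity. Given $[b_1]\le[b]\le[b_2]$ with $[b_1],[b_2]\in B(G)_x$, one combines this dimension identity at $[b_2]$ (which controls the generic locus) with the upper semi-continuity of the Newton stratification on $I\dot xI$, applied in the affine flag variety. Since the dimension function $[b]\mapsto \tfrac{1}{2}(\ell(x)+\ell(\eta_\sigma(x))-\defe(b))$ is monotone compatible with the partial order on $B(G)$, there is no room for an intermediate $[b]$ to be ``skipped'': the closure of $I\dot xI\cap [b_2]$ inside $I\dot xI$ must meet the Newton stratum of $[b]$, giving $[b]\in B(G)_x$.

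The main obstacle, which is the bulk of the work in \cite{MilVie}, is translating the abstract partial order on $B(G)$ (defined through dominance of Newton points together with equality of Kottwitz invariants) into an actual specialization statement inside $I\dot xI$; without cordiality the two need not align, and intermediate classes can indeed be missing. For our applications — specifically the proof of \Cref{bgx} — we do not need to re-enter this argument and simply quote the result.
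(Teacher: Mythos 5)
Your proposal takes exactly the paper's route: the paper states this lemma as a fact quoted from \cite{MilVie} (their theorem on cordial elements) without reproving it, so citing \textit{loc.cit.} is precisely what is done here. One small caution: \cite{MilVie} proves the implication (cordial $\Rightarrow$ $B(G)_x$ saturated, together with equidimensionality of the strata), not an equivalence of cordiality with saturation as you suggest, but the implication is all that is needed, and your sketched mechanism (uniform dimension formula plus semicontinuity/purity of the Newton stratification on $I\dot xI$) is indeed the engine behind their argument.
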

Hence, if the minimal and maximal elements are known, we have the full description of $B(G)_x$ which implies the complete classification of the nonemptiness of $X_x(b)$ for a fixed $x$.

\begin{proof}[Proof of \Cref{bgx}]
By \cite[Proposition 4.2]{He21}, $x=vt^\mu$ for a dominant $\mu$ is cordial and the generic $\sigma$-conjugacy class is $[t^\mu]$. By \Cref{saturated}, we only need to describe the minimal element in $B(G)_x$.

For a central $\mu$, we note that $[t^\mu]$ is the minimal among the elements whose image under the Kottwitz map is $\kappa_G(t^\mu)$. As it is minimal and maximal at the same time, $B(G)_x=\{[t^\mu]\}=B(G,\mu)$.

Let $\mu$ be non-central. We proved $\Phi_x=\{\alpha\in\Phi^+:\langle\alpha,\mu\rangle=0\text{ and }v\alpha\in\Phi^+\}$ in the last part of \Cref{123proof}. Hence, $\mu$ is fixed by the reflection with respect to $H_\alpha$ for $\alpha\in \Phi_x$. Now, we get
\begin{align*}
W_x&=\{r=s_m\cdots s_1\in W_0: \alpha_1,s_1\alpha_2, \cdots,s_1\cdots s_{m-1}\alpha_m\in \Phi_x\}\\
&=\{r\in W_0:\supp(r)\subset W_0(\mu)\text{ and }v\alpha_1,\cdots,vs_1\cdots s_{m-1}\alpha_m\in\Phi^+\}\\&=\{r\in W_0(\mu):\ell(vr^{-1})=\ell(v)+\ell(r)\}
\end{align*}

Now, by \Cref{mainthm} (2), $X_x(b_\text{b})\neq\emptyset$ for the basic element $b_\text{b}$ satisfying $\kappa_G(x)=\kappa_G(b_\text{b})$ if $\sigmasupp(\sigma^{-1}(r)\etasigma(x) r^{-1})=\bbS$ for all $r\in W_x$. In such a case, by \Cref{saturated}, $B(G)_x=\{[b]\in B(G):[b_\text{b}]\le [b]\le [b_x]=[t^\mu]\}=\{[b]\in B(G):[b]\le[t^\mu]\}=B(G,\mu)$.
\end{proof}

\bibliographystyle{alpha}
\bibliography{bibibi}

@article {Ra00,
    AUTHOR = {Rapoport, Michael},
     TITLE = {A positivity property of the {S}atake isomorphism},
   JOURNAL = {Manuscripta Math.},
  FJOURNAL = {Manuscripta Mathematica},
    VOLUME = {101},
      YEAR = {2000},
    NUMBER = {2},
     PAGES = {153--166},
      ISSN = {0025-2611},
   MRCLASS = {22E50 (14F30 20G25)},
  MRNUMBER = {1742251},
MRREVIEWER = {Adolfo Quir\'{o}s},
       DOI = {10.1007/s002290050010},
       URL = {https://doi.org/10.1007/s002290050010},
}

@article {GHN19,
    AUTHOR = {G\"{o}rtz, Ulrich and He, Xuhua and Nie, Sian},
     TITLE = {Fully {H}odge-{N}ewton decomposable {S}himura varieties},
   JOURNAL = {Peking Math. J.},
  FJOURNAL = {Peking Mathematical Journal},
    VOLUME = {2},
      YEAR = {2019},
    NUMBER = {2},
     PAGES = {99--154},
      ISSN = {2096-6075},
   MRCLASS = {11G18 (14G35 20G25)},
  MRNUMBER = {4060001},
MRREVIEWER = {Jinbo Ren},
       DOI = {10.1007/s42543-019-00013-2},
       URL = {https://doi.org/10.1007/s42543-019-00013-2},
}

@article {Bea,
    AUTHOR = {Beazley, E. T.},
     TITLE = {Codimensions of {N}ewton strata for {${\rm SL}_3(F)$} in the
              {I}wahori case},
   JOURNAL = {Math. Z.},
  FJOURNAL = {Mathematische Zeitschrift},
    VOLUME = {263},
      YEAR = {2009},
    NUMBER = {3},
     PAGES = {499--540},
      ISSN = {0025-5874},
   MRCLASS = {20G25 (14L05)},
  MRNUMBER = {2545856},
MRREVIEWER = {Eva Viehmann},
       DOI = {10.1007/s00209-008-0429-z},
       URL = {https://doi.org/10.1007/s00209-008-0429-z},
}

@article {KR00,
    AUTHOR = {Kottwitz, R. and Rapoport, M.},
     TITLE = {Minuscule alcoves for {${\rm GL}_n$} and {$G{\rm Sp}_{2n}$}},
   JOURNAL = {Manuscripta Math.},
  FJOURNAL = {Manuscripta Mathematica},
    VOLUME = {102},
      YEAR = {2000},
    NUMBER = {4},
     PAGES = {403--428},
      ISSN = {0025-2611},
   MRCLASS = {20G15 (17B20)},
  MRNUMBER = {1785323},
MRREVIEWER = {Cheng Dong Chen},
       DOI = {10.1007/s002290070034},
       URL = {https://doi.org/10.1007/s002290070034},
}

@article {He21,
    AUTHOR = {He, Xuhua},
     TITLE = {Cordial elements and dimensions of affine {D}eligne-{L}usztig
              varieties},
   JOURNAL = {Forum Math. Pi},
  FJOURNAL = {Forum of Mathematics. Pi},
    VOLUME = {9},
      YEAR = {2021},
     PAGES = {Paper No. e9, 15},
   MRCLASS = {11G25 (20G25)},
  MRNUMBER = {4312326},
MRREVIEWER = {Olivier Dudas},
       DOI = {10.1017/fmp.2021.10},
       URL = {https://doi.org/10.1017/fmp.2021.10},
}

@article {MilVie,
    AUTHOR = {Mili\'{c}evi\'{c}, Elizabeth and Viehmann, Eva},
     TITLE = {Generic {N}ewton points and the {N}ewton poset in
              {I}wahori-double cosets},
   JOURNAL = {Forum Math. Sigma},
  FJOURNAL = {Forum of Mathematics. Sigma},
    VOLUME = {8},
      YEAR = {2020},
     PAGES = {Paper No. e50, 18},
   MRCLASS = {20G25 (11G25 14L30 20F55)},
  MRNUMBER = {4176754},
MRREVIEWER = {Zhe Chen},
       DOI = {10.1017/fms.2020.46},
       URL = {https://doi.org/10.1017/fms.2020.46},
}

@misc{Sch,
    title={Geometric Newton points and cordial elements},
    author={Schremmer, Felix},
    year={2022},
    eprint={2205.02039},
    archivePrefix={arXiv},
    primaryClass={math.RT}
}

@article {Win05,
    AUTHOR = {Wintenberger, J.-P.},
     TITLE = {Existence de {$F$}-cristaux avec structures suppl\'{e}mentaires},
   JOURNAL = {Adv. Math.},
  FJOURNAL = {Advances in Mathematics},
    VOLUME = {190},
      YEAR = {2005},
    NUMBER = {1},
     PAGES = {196--224},
      ISSN = {0001-8708},
   MRCLASS = {14F30 (20G25)},
  MRNUMBER = {2104909},
MRREVIEWER = {Fabrizio Andreatta},
       DOI = {10.1016/j.aim.2003.12.006},
       URL = {https://doi.org/10.1016/j.aim.2003.12.006},
}

@article {KoRa03,
    AUTHOR = {Kottwitz, R. and Rapoport, M.},
     TITLE = {On the existence of {$F$}-crystals},
   JOURNAL = {Comment. Math. Helv.},
  FJOURNAL = {Commentarii Mathematici Helvetici},
    VOLUME = {78},
      YEAR = {2003},
    NUMBER = {1},
     PAGES = {153--184},
      ISSN = {0010-2571},
   MRCLASS = {11S25 (14F30 14L05)},
  MRNUMBER = {1966756},
MRREVIEWER = {Elmar Grosse-Kl\"{o}nne},
       DOI = {10.1007/s000140300007},
       URL = {https://doi.org/10.1007/s000140300007},
}

@article {He16,
    AUTHOR = {He, Xuhua},
     TITLE = {Kottwitz-{R}apoport conjecture on unions of affine
              {D}eligne-{L}usztig varieties},
   JOURNAL = {Ann. Sci. \'{E}c. Norm. Sup\'{e}r. (4)},
  FJOURNAL = {Annales Scientifiques de l'\'{E}cole Normale Sup\'{e}rieure. Quatri\`eme
              S\'{e}rie},
    VOLUME = {49},
      YEAR = {2016},
    NUMBER = {5},
     PAGES = {1125--1141},
      ISSN = {0012-9593},
   MRCLASS = {14M15 (14G35 20G25)},
  MRNUMBER = {3581812},
MRREVIEWER = {Alexander Boris Ivanov},
       DOI = {10.24033/asens.2305},
       URL = {https://doi.org/10.24033/asens.2305},
}

@article {Kottwitz03,
    AUTHOR = {Kottwitz, Robert E.},
     TITLE = {On the {H}odge-{N}ewton decomposition for split groups},
   JOURNAL = {Int. Math. Res. Not.},
  FJOURNAL = {International Mathematics Research Notices},
      YEAR = {2003},
    NUMBER = {26},
     PAGES = {1433--1447},
      ISSN = {1073-7928},
   MRCLASS = {22E50 (20G25)},
  MRNUMBER = {1976046},
MRREVIEWER = {Abdellah Mokrane},
       DOI = {10.1155/S1073792803130218},
       URL = {https://doi.org/10.1155/S1073792803130218},
}

@article {He14,
    AUTHOR = {He, Xuhua},
     TITLE = {Geometric and homological properties of affine
              {D}eligne-{L}usztig varieties},
   JOURNAL = {Ann. of Math. (2)},
  FJOURNAL = {Annals of Mathematics. Second Series},
    VOLUME = {179},
      YEAR = {2014},
    NUMBER = {1},
     PAGES = {367--404},
      ISSN = {0003-486X},
   MRCLASS = {14L35 (14M15 20G25)},
  MRNUMBER = {3126571},
MRREVIEWER = {Alexander Boris Ivanov},
       DOI = {10.4007/annals.2014.179.1.6},
       URL = {https://doi.org/10.4007/annals.2014.179.1.6},
}

@article {Gashi,
    AUTHOR = {Gashi, Q\"{e}ndrim R.},
     TITLE = {On a conjecture of {K}ottwitz and {R}apoport},
   JOURNAL = {Ann. Sci. \'{E}c. Norm. Sup\'{e}r. (4)},
  FJOURNAL = {Annales Scientifiques de l'\'{E}cole Normale Sup\'{e}rieure. Quatri\`eme
              S\'{e}rie},
    VOLUME = {43},
      YEAR = {2010},
    NUMBER = {6},
     PAGES = {1017--1038},
      ISSN = {0012-9593},
   MRCLASS = {14L35 (20G15)},
  MRNUMBER = {2778454},
       DOI = {10.24033/asens.2138},
       URL = {https://doi.org/10.24033/asens.2138},
}

@article {Kis17,
    AUTHOR = {Kisin, Mark},
     TITLE = {{${\rm mod}\,p$} points on {S}himura varieties of abelian
              type},
   JOURNAL = {J. Amer. Math. Soc.},
  FJOURNAL = {Journal of the American Mathematical Society},
    VOLUME = {30},
      YEAR = {2017},
    NUMBER = {3},
     PAGES = {819--914},
      ISSN = {0894-0347},
   MRCLASS = {11G18 (11G10 14G35)},
  MRNUMBER = {3630089},
MRREVIEWER = {Mihran Papikian},
       DOI = {10.1090/jams/867},
       URL = {https://doi.org/10.1090/jams/867},
}

@article {Shen,
    AUTHOR = {Shen, Xu},
     TITLE = {On some generalized {R}apoport-{Z}ink spaces},
   JOURNAL = {Canad. J. Math.},
  FJOURNAL = {Canadian Journal of Mathematics. Journal Canadien de
              Math\'{e}matiques},
    VOLUME = {72},
      YEAR = {2020},
    NUMBER = {5},
     PAGES = {1111--1187},
      ISSN = {0008-414X},
   MRCLASS = {11G18 (14G35)},
  MRNUMBER = {4152538},
MRREVIEWER = {Alan Koch},
       DOI = {10.4153/s0008414x19000269},
       URL = {https://doi.org/10.4153/s0008414x19000269},
}

@article {CKV,
    AUTHOR = {Chen, Miaofen and Kisin, Mark and Viehmann, Eva},
     TITLE = {Connected components of affine {D}eligne-{L}usztig varieties
              in mixed characteristic},
   JOURNAL = {Compos. Math.},
  FJOURNAL = {Compositio Mathematica},
    VOLUME = {151},
      YEAR = {2015},
    NUMBER = {9},
     PAGES = {1697--1762},
      ISSN = {0010-437X},
   MRCLASS = {14G35 (20G25)},
  MRNUMBER = {3406443},
MRREVIEWER = {Zongbin Chen},
       DOI = {10.1112/S0010437X15007253},
       URL = {https://doi.org/10.1112/S0010437X15007253},
}

@article {Nie,
    AUTHOR = {Nie, Sian},
     TITLE = {Connected components of closed affine {D}eligne-{L}usztig
              varieties in affine {G}rassmannians},
   JOURNAL = {Amer. J. Math.},
  FJOURNAL = {American Journal of Mathematics},
    VOLUME = {140},
      YEAR = {2018},
    NUMBER = {5},
     PAGES = {1357--1397},
      ISSN = {0002-9327},
   MRCLASS = {14M15 (14G35)},
  MRNUMBER = {3862068},
MRREVIEWER = {\'{O}scar Rivero},
       DOI = {10.1353/ajm.2018.0034},
       URL = {https://doi.org/10.1353/ajm.2018.0034},
}

@article {Djokovic,
    AUTHOR = {{\DJ}oković, D. \v{Z}. and Check, P. and H\'{e}e, J.-Y.},
     TITLE = {On closed subsets of root systems},
   JOURNAL = {Canad. Math. Bull.},
  FJOURNAL = {Canadian Mathematical Bulletin. Bulletin Canadien de
              Math\'{e}matiques},
    VOLUME = {37},
      YEAR = {1994},
    NUMBER = {3},
     PAGES = {338--345},
      ISSN = {0008-4395},
   MRCLASS = {17B20},
  MRNUMBER = {1289769},
MRREVIEWER = {Jorge A. Vargas},
       DOI = {10.4153/CMB-1994-050-4},
       URL = {https://doi.org/10.4153/CMB-1994-050-4},
}

@book {OnishchikVinberg,
    AUTHOR = {Onishchik, A. L. and Vinberg, \`E. B.},
     TITLE = {Lie groups and algebraic groups},
    SERIES = {Springer Series in Soviet Mathematics},
      NOTE = {Translated from the Russian and with a preface by D. A.
              Leites},
 PUBLISHER = {Springer-Verlag, Berlin},
      YEAR = {1990},
     PAGES = {xx+328},
      ISBN = {3-540-50614-4},
   MRCLASS = {22-01 (17B20 20G20 22E10 22E15)},
  MRNUMBER = {1064110},
MRREVIEWER = {James E. Humphreys},
       DOI = {10.1007/978-3-642-74334-4},
       URL = {https://doi.org/10.1007/978-3-642-74334-4},
}

@article {LusztigTits,
    AUTHOR = {Lusztig, George and Tits, Jacques},
     TITLE = {The inverse of a {C}artan matrix},
   JOURNAL = {An. Univ. Timi\c{s}oara Ser. \c{S}tiin\c{t}. Mat.},
  FJOURNAL = {Analele Universit\u{a}\c{t}ii din Timi\c{s}oara. Seria \c{S}tiin\c{t}e Matematice},
    VOLUME = {30},
      YEAR = {1992},
    NUMBER = {1},
     PAGES = {17--23},
   MRCLASS = {22E15 (15A09)},
  MRNUMBER = {1329156},
MRREVIEWER = {Jean H. Bevis},
}

@article{ghnerror,
        title={ERRATUM: {$\bf P$}-alcoves and nonemptiness of affine {D}eligne-{L}usztig varieties},
        author={Görtz, Ulrich and He, Xuhua and Nie, Sian},
        journal={https://www.uni-due.de/~hx0050/pdf/Erratum-GHN.pdf}
}

@article {HeZhou,
    AUTHOR = {He, Xuhua and Zhou, Rong},
     TITLE = {On the connected components of affine {D}eligne-{L}usztig
              varieties},
   JOURNAL = {Duke Math. J.},
  FJOURNAL = {Duke Mathematical Journal},
    VOLUME = {169},
      YEAR = {2020},
    NUMBER = {14},
     PAGES = {2697--2765},
      ISSN = {0012-7094},
   MRCLASS = {14G35 (20G25)},
  MRNUMBER = {4149507},
MRREVIEWER = {Alan Koch},
       DOI = {10.1215/00127094-2020-0020},
       URL = {https://doi.org/10.1215/00127094-2020-0020},
}

@article {HeNie,
    AUTHOR = {He, Xuhua and Nie, Sian},
     TITLE = {On the acceptable elements},
   JOURNAL = {Int. Math. Res. Not. IMRN},
  FJOURNAL = {International Mathematics Research Notices. IMRN},
      YEAR = {2018},
    NUMBER = {3},
     PAGES = {907--931},
      ISSN = {1073-7928},
   MRCLASS = {14L15 (11G05 20G15)},
  MRNUMBER = {3801450},
MRREVIEWER = {Alan Koch},
       DOI = {10.1093/imrn/rnw260},
       URL = {https://doi.org/10.1093/imrn/rnw260},
}

@article {RR96,
    AUTHOR = {Rapoport, M. and Richartz, M.},
     TITLE = {On the classification and specialization of {$F$}-isocrystals
              with additional structure},
   JOURNAL = {Compositio Math.},
  FJOURNAL = {Compositio Mathematica},
    VOLUME = {103},
      YEAR = {1996},
    NUMBER = {2},
     PAGES = {153--181},
      ISSN = {0010-437X},
   MRCLASS = {14F30 (22E50)},
  MRNUMBER = {1411570},
MRREVIEWER = {Abdellah Mokrane},
       URL = {http://www.numdam.org/item?id=CM_1996__103_2_153_0},
}

@article {GHKR,
    AUTHOR = {G\"{o}rtz, Ulrich and Haines, Thomas J. and Kottwitz, Robert E.
              and Reuman, Daniel C.},
     TITLE = {Affine {D}eligne-{L}usztig varieties in affine flag varieties},
   JOURNAL = {Compos. Math.},
  FJOURNAL = {Compositio Mathematica},
    VOLUME = {146},
      YEAR = {2010},
    NUMBER = {5},
     PAGES = {1339--1382},
      ISSN = {0010-437X},
   MRCLASS = {14L35 (11S25 14F30 20G25)},
  MRNUMBER = {2684303},
MRREVIEWER = {Alan Koch},
       DOI = {10.1112/S0010437X10004823},
       URL = {https://doi.org/10.1112/S0010437X10004823},
}

@article {GHN,
    AUTHOR = {G\"{o}rtz, Ulrich and He, Xuhua and Nie, Sian},
     TITLE = {{$\bf P$}-alcoves and nonemptiness of affine
              {D}eligne-{L}usztig varieties},
   JOURNAL = {Ann. Sci. \'{E}c. Norm. Sup\'{e}r. (4)},
  FJOURNAL = {Annales Scientifiques de l'\'{E}cole Normale Sup\'{e}rieure. Quatri\`eme
              S\'{e}rie},
    VOLUME = {48},
      YEAR = {2015},
    NUMBER = {3},
     PAGES = {647--665},
      ISSN = {0012-9593},
   MRCLASS = {14L35 (14M15 20G15)},
  MRNUMBER = {3377055},
MRREVIEWER = {Eva Viehmann},
       DOI = {10.24033/asens.2254},
       URL = {https://doi.org/10.24033/asens.2254},
}

@article{HainesRapoport,
title = {Appendix: On parahoric subgroups},
journal = {Advances in Mathematics},
volume = {219},
number = {1},
pages = {188-198},
year = {2008},
issn = {0001-8708},
doi = {https://doi.org/10.1016/j.aim.2008.04.020},
url = {https://www.sciencedirect.com/science/article/pii/S0001870808001333},
author = {T. Haines and M. Rapoport},
keywords = {Parahoric subgroup, Iwahori Weyl group},
abstract = {We give the proofs of some simple facts on parahoric subgroups and on Iwahori Weyl groups used in [T. Haines, The base change fundamental lemma for central elements in parahoric Hecke algebras, preprint, 2008; G. Pappas, M. Rapoport, Twisted loop groups and their affine flag varieties, Adv. Math. 219 (2008) 118–198; M. Rapoport, A guide to the reduction modulo p of Shimura varieties, Astérisque 298 (2005) 271–318].}
}

@article{PappasRapoport,
title = {Twisted loop groups and their affine flag varieties},
journal = {Advances in Mathematics},
volume = {219},
number = {1},
pages = {118-198},
year = {2008},
issn = {0001-8708},
doi = {https://doi.org/10.1016/j.aim.2008.04.006},
url = {https://www.sciencedirect.com/science/article/pii/S0001870808001205},
author = {G. Pappas and M. Rapoport},
keywords = {Loop group, Affine Kac–Moody algebra, Schubert variety, Shimura variety},
abstract = {We develop a theory of affine flag varieties and of their Schubert varieties for reductive groups over a Laurent power series local field k((t)) with k a perfect field. This can be viewed as a generalization of the theory of affine flag varieties for loop groups to a “twisted case”; a consequence of our results is that our construction also includes the flag varieties for Kac–Moody Lie algebras of affine type. We also give a coherence conjecture on the dimensions of the spaces of global sections of the natural ample line bundles on the partial flag varieties attached to a fixed group over k((t)) and some applications to local models of Shimura varieties.}
}

@incollection {Ra05,
    AUTHOR = {Rapoport, Michael},
     TITLE = {A guide to the reduction modulo {$p$} of {S}himura varieties},
      NOTE = {Automorphic forms. I},
   JOURNAL = {Ast\'{e}risque},
  FJOURNAL = {Ast\'{e}risque},
    NUMBER = {298},
      YEAR = {2005},
     PAGES = {271--318},
      ISSN = {0303-1179},
   MRCLASS = {11G18 (11G40 14G35)},
  MRNUMBER = {2141705},
MRREVIEWER = {Ulrich G\"{o}rtz},
}

@article {Reuman,
    AUTHOR = {Reuman, Daniel C.},
     TITLE = {Formulas for the dimensions of some affine {D}eligne-{L}usztig
              varieties},
   JOURNAL = {Michigan Math. J.},
  FJOURNAL = {Michigan Mathematical Journal},
    VOLUME = {52},
      YEAR = {2004},
    NUMBER = {2},
     PAGES = {435--451},
      ISSN = {0026-2285},
   MRCLASS = {20G25},
  MRNUMBER = {2069809},
MRREVIEWER = {Elmar Grosse-Kl\"{o}nne},
       DOI = {10.1307/mmj/1091112084},
       URL = {https://doi.org/10.1307/mmj/1091112084},
}

@book {Bourbaki,
    AUTHOR = {Bourbaki, Nicolas},
     TITLE = {\'{E}l\'{e}ments de math\'{e}matique},
      NOTE = {Groupes et alg\`ebres de Lie. Chapitres 4, 5 et 6. [Lie groups
              and Lie algebras. Chapters 4, 5 and 6]},
 PUBLISHER = {Masson, Paris},
      YEAR = {1981},
     PAGES = {290},
      ISBN = {2-225-76076-4},
   MRCLASS = {17-02 (00A05)},
  MRNUMBER = {647314},
}

@article {BS,
    AUTHOR = {Bhatt, Bhargav and Scholze, Peter},
     TITLE = {Projectivity of the {W}itt vector affine {G}rassmannian},
   JOURNAL = {Invent. Math.},
  FJOURNAL = {Inventiones Mathematicae},
    VOLUME = {209},
      YEAR = {2017},
    NUMBER = {2},
     PAGES = {329--423},
      ISSN = {0020-9910},
   MRCLASS = {14F05 (14M15 19G12)},
  MRNUMBER = {3674218},
MRREVIEWER = {Marc-Hubert Nicole},
       DOI = {10.1007/s00222-016-0710-4},
       URL = {https://doi.org/10.1007/s00222-016-0710-4},
}

@article {Zhu,
    AUTHOR = {Zhu, Xinwen},
     TITLE = {Affine {G}rassmannians and the geometric {S}atake in mixed
              characteristic},
   JOURNAL = {Ann. of Math. (2)},
  FJOURNAL = {Annals of Mathematics. Second Series},
    VOLUME = {185},
      YEAR = {2017},
    NUMBER = {2},
     PAGES = {403--492},
      ISSN = {0003-486X},
   MRCLASS = {14D24 (14L35 14M15 20G25)},
  MRNUMBER = {3612002},
MRREVIEWER = {Rolf Berndt},
       DOI = {10.4007/annals.2017.185.2.2},
       URL = {https://doi.org/10.4007/annals.2017.185.2.2},
}

@article {Kottwitz,
    AUTHOR = {Kottwitz, Robert E.},
     TITLE = {Isocrystals with additional structure. {II}},
   JOURNAL = {Compositio Math.},
  FJOURNAL = {Compositio Mathematica},
    VOLUME = {109},
      YEAR = {1997},
    NUMBER = {3},
     PAGES = {255--339},
      ISSN = {0010-437X},
   MRCLASS = {20G25 (11S25 14F30 14L05)},
  MRNUMBER = {1485921},
MRREVIEWER = {Guy Rousseau},
       DOI = {10.1023/A:1000102604688},
       URL = {https://doi.org/10.1023/A:1000102604688},
}

@article {Sopkina,
    AUTHOR = {Sopkina, E. A.},
     TITLE = {On the sum of roots of a closed set},
   JOURNAL = {Zap. Nauchn. Sem. S.-Peterburg. Otdel. Mat. Inst. Steklov.
              (POMI)},
  FJOURNAL = {Rossi\u{\i}skaya Akademiya Nauk. Sankt-Peterburgskoe Otdelenie.
              Matematicheski\u{\i} Institut im. V. A. Steklova. Zapiski Nauchnykh
              Seminarov (POMI)},
    VOLUME = {289},
      YEAR = {2002},
    NUMBER = {Vopr. Teor. Predst. Algebr. i Grupp. 9},
     PAGES = {277--286, 304},
      ISSN = {0373-2703},
   MRCLASS = {17B20},
  MRNUMBER = {1949746},
MRREVIEWER = {Alexander N. Rudy\u{\i}},
       DOI = {10.1023/B:JOTH.0000042319.81176.99},
       URL = {https://doi.org/10.1023/B:JOTH.0000042319.81176.99},
}

\end{document}